\newcommand{\sample}{50}%TODO
\newcommand{\graphRatio}{.63}
\newcommand{\singleGraph}{.8}
\renewcommand{\Rn}{\R^n}
\DeclareMathOperator{\CSO}{CSO}
\DeclareMathOperator{\adj}{adj}
\newcommand{\lmax}{\lambdamax}
\newcommand{\lmin}{\lambdamin}
\newcommand{\Wmm}{W_{\mathrm{magic}}^-}
\newcommand{\Wmp}{W_{\mathrm{magic}}^+}
\newcommand{\Ws}{W_\mathrm{smooth}}
\newcommand{\Wn}{W_\mathrm{3b}}
\renewcommand{\Wlin}{W_\textrm{lin}}
\newcommand{\Wiso}{W_{\mathrm{iso}}}
\newcommand{\Wvol}{W_{\mathrm{vol}}}
\newcommand{\Winc}{W_{\mathrm{inc}}}
\newcommand{\sigmalin}{\sigma^\mathrm{lin}}
\newcommand{\Wliniso}{W_\mathrm{iso}^\mathrm{lin}}
\newcommand{\Wlinvol}{W_\mathrm{vol}^\mathrm{lin}}
\newcommand{\Mm}{\mathfrak{M}_-}
\newcommand{\Mp}{\mathfrak{M}_+}
\newcommand{\Mms}{\mathfrak{M}_-^*}
\newcommand{\Mps}{\mathfrak{M}_+^*}
\newcommand{\ddp}{\frac{\mathrm{d}}{\mathrm{d}p}}
\newcommand{\ddr}{\frac{\mathrm{d}}{\mathrm{d}r}}
\newcommand{\dth}{\mathrm{d}\vartheta}
\newcommand{\casesif}{\caseif}
\newcommand{\BRz}{B_R(0)}
\newcommand{\xmark}{\ding{55}}
\providecommand{\citet}[2][]{\citeauthor{#2} \cite[#1]{#2}}
\tikzset{
    state/.style={
           rectangle,
           rounded corners,
           draw=black, very thick,
           minimum height=2em,
           inner sep=6pt,
           text centered,
           }
}
\begin{document}
\title{Morrey's conjecture for the planar volumetric-isochoric split.\\ \mbox{Part I:} least convex energy functions}
\date{\today}
\knownauthors[voss]{voss,martin,ghiba,neff}
\maketitle
\vspace{-1.5em}
\begin{abstract}
\noindent
We consider Morrey's open question whether rank-one convexity already implies quasiconvexity in the planar case. For some specific families of energies, there are precise conditions known under which rank-one convexity even implies polyconvexity. We will extend some of these findings to the more general family of energies $W:\operatorname{GL}^+(n)\rightarrow\mathbb{R}$ with an additive volumetric-isochoric split, i.e.
\[
	W(F)=W_{\rm iso}(F)+W_{\rm vol}(\det F)=\widetilde W_{\rm iso}\bigg(\frac{F}{\sqrt{\det F}}\bigg)+W_{\rm vol}(\det F)\,,
\]
which is the natural finite extension of isotropic linear elasticity. Our approach is based on a condition for rank-one convexity which was recently derived from the classical two-dimensional criterion by Knowles and Sternberg and consists of a family of one-dimensional coupled differential inequalities. We identify a number of \enquote{least} rank-one convex energies and, in particular, show that for planar volumetric-isochorically split energies with a concave volumetric part, the question of whether rank-one convexity implies quasiconvexity can be reduced to the open question of whether the rank-one convex energy function
\[
	W_{\rm magic}^+(F)=\frac{\lambda_{\rm max}}{\lambda_{\rm min}}-\log\frac{\lambda_{\rm max}}{\lambda_{\rm min}}+\log\det F=\frac{\lambda_{\rm max}}{\lambda_{\rm min}}-2\log\lambda_{\rm min}
\]
is quasiconvex. In addition, we demonstrate that under affine boundary conditions, $W_{\rm magic}^+(F)$ allows for non-trivial inhomogeneous deformations with the same energy level as the homogeneous solution,
and show a surprising connection to the work of Burkholder and Iwaniec in the field of complex analysis. 
\end{abstract}
{\textbf{Key words:} nonlinear elasticity, hyperelasticity, planar elasticity, rank-one convexity, quasiconvexity, ellipticity, Legendre-Hadamard condition, isotropy, volumetric-isochoric split}
\\[.65em]
\noindent\textbf{AMS 2010 subject classification:
	74B20  % nonlinear elasticity
	74A10, % theory of constitutive functions
	26B25  % generalized convexity of functions with multiple variables
}

{\parskip=-0.4mm \tableofcontents}
%
%
%
%
%%%%%%%%%%%%%%%%%%%%%%%%%%%%%%%%%%%%%%%%%%%%
\vspace{-0.6em}
\section{Introduction}
One of the major open problems in the calculus of variations is Morrey's problem in the planar case. The open question is whether or not rank-one convexity implies quasiconvexity. Morrey \cite{morrey1952quasi,morrey2009multiple} conjectured that this is not the case in general and indeed the result by Sverak \cite{vsverak1992rank} settles the question for the case $n\geq 3$. On the other hand, in the one-dimensional case, all four notions of convexity are equivalent. For the last nearly thirty years, impressive progress on the problem has been made without, however, leading to a breakthrough. While researchers coming from the area of nonlinear elasticity in general seem to believe that Morrey's conjecture is true also in the planar case, i.e.\ that rank-one convexity does not imply quasiconvexity, researchers with a background in conformal and quasiconformal analysis \cite{astala2008elliptic} tend to believe in the truth of the opposite implication \cite{iwaniec1982extremal}. In conformal analysis, this conclusion would imply a number of far-reaching conjectures which are themselves deemed reasonable at present.

Nonlinear elasticity is concerned with energies $W$ defined on the group of matrices with positive determinant $\GLp(n)=\left\{F\in\Rnn|\det F>0\right\}$, while the general case of Morrey's question considers scalar functions $W$ defined on the linear space $\Rnn$. It is conceivable that the answer to the planar Morrey's conjecture depends on whether one focuses on $\GLp(2)$ or $\R^{2\times 2}$. Assuming that an example for Morrey's conjecture in $\GLp(2)$ can be found, it is also not immediately clear how such an example might be generalized to the $\R^{2\times 2}$-case (although it is tempting to believe that this is always possible). The precise statement of Morrey's conjecture also depends on the invariance properties one requires for the energy $W$ and the domain of definition. Different versions of Morrey's conjecture include the following, all of which currently remain open:
{%
\setlength{\leftmargini}{7em}%
\newcommand{\morreyindent}[1]{\rlap{#1}\hphantom{Morrey III:}}
\begin{itemize}
	\item[\morreyindent{Morrey I:}] Let $W\col\Rnn\to\R$ (with no further invariance requirement). If $W$ is rank-one convex, then $W$ is not necessarily quasiconvex. For $n\geq 3$ ($W$ being continuous), this statement has been conclusively settled by Sverak's famous counterexample \cite{vsverak1992rank}.
	\item[\morreyindent{Morrey II:}] Let $W\col\R^{2\times 2}\to\R$ be continuous and left/right-$\SO(2)$ invariant (objective and hemitropic). If $W$ is rank-one convex, then $W$ is not necessarily quasiconvex.
	\item[\morreyindent{Morrey III:}] Let $W\col\GLp(2)\to\R\cup\{\infty\}$ and $W(F)\to\infty$ as $\det F\to 0$ be left-$\SO(2)$ invariant (objective, frame-indifferent) and right-$\OO(2)$ invariant (isotropic). If $W$ is rank-one convex, then $W$ is not necessarily quasiconvex.
	\item[\morreyindent{Morrey IV:}] Let $W\col\R^{2\times 2}\to\R$  be left-$\SO(2)$ invariant (objective, frame-indifferent) and right-$\OO(2)$ invariant (isotropic). If $W$ is rank-one convex, then $W$ is not necessarily quasiconvex.
	\item[\morreyindent{Morrey V:}] Let $W\col\GLp(2)\to\R\cup\{\infty\}$ and $W(F)\to\infty$ as $\det F\to 0$ be left/right-$\SO(2)$ invariant (objective and hemitropic). If $W$ is rank-one convex, then $W$ is not necessarily quasiconvex.
\end{itemize}}
Iwaniec, Astala \cite{astala2012quasiconformal} and their groups try to disprove Morrey II to prove various conjectures in the context of complex analysis. It is conceivable, however, that Morrey II is false, while Morrey III is true. Similarly, Morrey III may also be false with Morrey II being true. There is no clear connection between the two statements: in Morrey III the domain of definition is smaller, but then rank-one convexity needs only to hold true on a smaller set as compared to Morrey II. The role of invariance requirements (isotropic versus hemitropic) or general anisotropy is also open.

The restriction to $\GLp(2)$ and isotropy in Morrey III is advantageous because the check of rank-one convexity can be based on a version of Knowles/Sternberg conditions \cite{knowles1976failure,knowles1978failure} operating only on the representation of the elastic energy in terms of singular values. The rank-one convexity condition on $\R^{2\times 2}$ was also completely settled by \citet{hamburger1987characterization,aubert1995necessary}. Nevertheless, it seems that one has to choose a favorable representation first and then use an adapted criterion. Quasiconvexity on the other hand is notoriously difficult to prove or disprove. A straightforward sufficient condition is Sir John Ball's polyconvexity condition \cite{ball1976convexity}. One of the advantages of polyconvexity is that it is both suitable for $\GLp(2)$ and $\R^{2\times 2}$ and that there are sharp characterizations of polyconvexity in the $\GLp(2)$ and isotropic case \cite{Dacorogna08,silhavy1997mechanics,mielke2005necessary}.

Apparently, even for $n\geq3$, there is no isotropic example to Morrey I to be found in the literature. A common method to approach Morrey's conjecture is to discuss a specific family of energies, e.g.\ purely volumetric or isochoric energies (cf.\ Section \ref{sec:purelyVolumetric} and \ref{sec:purelyIsochoric}), to find an often simple relation between the different notions of convexity. We will extend these findings to a much more involved class of energies with an additive volumetric-isochoric split \eqref{eq:volIsoSplit} and follow the concept of \enquote{least} rank-one convex energies, i.e.\ functions that are as weakly rank-one convex as possible, to serve as canonical candidates. Focusing on energies with a concave volumetric part we show that the rank-one convex function
\[
	W_{\rm magic}^+(F)=\frac{\lambdamax}{\lambdamin}-\log\frac{\lambdamax}{\lambdamin}+\log\det F=\frac{\lambdamax}{\lambdamin}-2\.\log\lambdamin
\]
with $\lmax\geq\lmin>0$ as the ordered singular values of the deformation gradient $F=\nabla\varphi$ suffices as the sole candidate for this class of functions. The question of quasiconvexity for $\Wmp(F)$ closes Morrey's conjecture for all volumetric-isochorically split energies with a concave volumetric part (cf.\ Lemma \ref{lem:minimallyConvexEnergy}).

In a sequel of this work \cite{agn_voss2021morrey2} we will numerically investigate the quasiconvexity of $\Wmp(F)$.
%
%
%
%
%%%%%%%%%%%%%%%%%%%%%%%%%%%%%%%%%%%%%%%%%%%%%%%%%%%%%%%%%%%%
\subsection{Basic definitions of convexity properties}\label{sec:convexity}
\begin{definition}\label{def:quasiconvexity}
	The energy function $W\col\Rnn\to\R\cup\{+\infty\}$ is \emph{quasiconvex} if and only if
	\begin{equation}
		\int_\Omega W(F_0+\nabla\vartheta(x))\,\dx\geq\int_\Omega W(F_0)\,\dx=\abs\Omega\cdot W(F_0)\qquad\text{for all }F_0\in\Rnn,\;\vartheta\subset W_0^{1,\infty}(\Omega;\Rn)\label{eq:quasiconvexity}
	\end{equation}
	for any domain $\Omega\subset\Rn$ with Lebesgue measure $\abs{\Omega}$. The energy is \emph{strictly quasiconvex} if inequality \eqref{eq:quasiconvexity} is strict.
\end{definition}
While quasiconvexity is sufficient to ensure weak lower semi-continuity of the energy functional, it has the simple disadvantage of being almost impossible to prove or disprove. This led to the introduction of sufficient and necessary conditions for quasiconvexity. 
\begin{definition}\label{def:polyconvexity}
	The energy function $W\col\Rnn\to\R\cup\{+\infty\}$ is \emph{polyconvex} if and only if there exists a convex function $P\col\R^{m(n)}\to\R\cup\{+\infty\}$ with
	\begin{equation}
		W(F)=P(F,\adj_2(F),\cdots,\adj_n(F))\qquad\text{for all}\quad F\in\Rnn\label{eq:polyconvex}
	\end{equation}
	with $\adj_i(F)\in\Rnn$ as the matrix of the determinants of all $i\times i-$minors of $F$ and $m(n) \colonequals \sum_{i=1}^n \binom{n}{i}^2$. The energy is \emph{strictly polyconvex} if there exists a representation $P$ which is strictly convex.	
\end{definition}
In particular, for the planar case $n=2$, the energy $W\col\R^{2\times 2}\to\R\cup\{+\infty\}$ is polyconvex if and only if there exists a convex mapping $P\col\R^{2\times 2}\times\R\cong\R^5\to\R\cup\{+\infty\}$ with
\[
	W(F)=P(F,\det F)\qquad\text{for all}\quad F\in\R^{2\times 2}\,.
\]
Whereas polyconvexity provides a sufficient criterion for quasiconvexity, rank-one convexity is introduced as a necessary condition.
\begin{definition}\label{def:rank1convexity}
	The energy $W\col\Rnn\to\R\cup\{+\infty\}$ is \emph{rank-one convex} if and only if
	\begin{equation}
		W(t\.F_1+(1-t)\.F_2)\leq t\.W(F_1)+(1-t)\.W(F_2)\qquad\text{for}\quad\rank(F_1-F_2)=1\,,\quad t\in(0,1)\,.\label{eq:rankOneConvexity}
	\end{equation}
	If the energy is two-times differentiable, global rank-one convexity is equivalent to the \emph{Legendre-Hadamard ellipticity condition}
	\begin{equation}
		D^2W(F).(\xi\otimes\eta,\xi\otimes\eta)\geq 0 \qquad\text{for all}\quad F\in\Rnn,\;\xi,\eta\in\Rn\label{LegendreHadamardEllipticity}
	\end{equation}
	which expresses the ellipticity of the Euler-Lagrange equations $\Div DW(\nabla\varphi)=0$ associated to the variational problem
	\begin{equation}
		I(\varphi)=\int_\Omega W(\nabla\varphi(x))\,\dx\to\min\,.\label{eq:minimizationProblem}
	\end{equation}
	The energy is \emph{strictly rank-one convex} if inequality \eqref{eq:rankOneConvexity} is strict.
\end{definition}

Overall, for any $W\col\Rnn\to\R\cup\{+\infty\}$ we have the well known hierarchy \cite{ball1976convexity,Dacorogna08}
\begin{equation}
	\text{polyconvexity}\quad\implies\quad\text{quasiconvexity}\quad\implies\quad\text{rank-one convexity}.
\end{equation}

Energy functions in nonlinear elasticity are naturally defined on $\GLpn$ instead of $\Rnn$ to prevent self-intersection. Thus in the special case of functions defined on the domain $\GLp(n)$ we introduce:
\begin{definition}
	A function $W\col\GLpn\to\R$ is called quasi-/poly-/rank-one convex if the function
	\[
		\What\col\Rnn\to\R\cup\{+\infty\}\,,\quad\What(F)=\begin{cases}
			W(F) &\casesif F\in\GLpn\,,\\
			+\infty &\casesif F\notin\GLpn\,,
		\end{cases}
	\]
is quasi-/poly-/rank-one convex. A function $W\col\GLpn\to\R$ is called convex if there exists a convex function $\What\col\Rnn\to\R$ such that $\What(F)=W(F)$ for all $F\in\GLpn$.
\end{definition}
%
%
%
%
%
%%%%%%%%%%%%%%%%%%%%%%%%%%%%%%%%%%%%%%%%%%%%%%%%%%%%%%%%%%%%%%%%%
\section{The additive volumetric-isochoric split}\label{sec:volisoSplit}
Next, we discuss the general family of planar energies $W\col\GLp(2)\to\R\cup\{\infty\}$ with additive combinations of conformally invariant (isochoric) energies on the one hand, and area-dependent (volumetric) energies (also called the \emph{additive volumetric-isochoric split}) on the other hand:
\begin{align}
	W(F)=\Wiso(F)+\Wvol(\det F)=\underbrace{\widetilde W_\textrm{iso}\bigg(\frac{F}{\sqrt{\det F}}\bigg)}_{\mathclap{\text{conformally invariant}}}+\underbrace{\Wvol(\det F)}_{\hspace{2cm}\mathclap{\text{purely area-dependent in 2D}}}.\label{eq:volIsoSplit}
\end{align}
It is already known that for purely isochoric energies (cf.\ Section \ref{sec:purelyIsochoric}) as well as purely volumetric energies (cf.\ Section \ref{sec:purelyVolumetric}) rank-one convexity already implies polyconvexity (and thereby quasiconvexity). Thus, it is suggestive to inquire whether the same conclusion can be reached for additive combinations or not.

The additive volumetric-isochoric split itself is a natural finite extension of isotropic linear elasticity, i.e.\
\begin{align}
	\Wlin(\nabla u)&=\mu\.\norm{\sym\nabla u}^2+\frac{\lambda}{2}\.(\tr\nabla u)^2=\mu\.\norm{\dev_n\sym\nabla u}^2+\frac{\kappa}{2}\.(\tr\nabla u)^2\,,\label{eq:linEla}\\
	\sigmalin&=D\Wlin(\nabla u)=2\mu\.\dev_n\sym\nabla u+\kappa\.\tr(\nabla u)\.\id\,.\notag
\end{align}
The right hand side of the energy is automatically additively separated into pure infinitesimal shape change and infinitesimal volume change of the displacement gradient $\nabla u$, with a similar additive split of the linear Cauchy stress tensor into a deviatoric part and a spherical part, depending only on the shape change and volumetric change, respectively.

This type of energy functions is often used for modeling slightly incompressible material behavior \cite{Ciarlet1988,agn_hartmann2003polyconvexity,favrie2014thermodynamically,ogden1978nearly,charrier1988existence} or when otherwise no detailed information on the actual response of the material is available \cite{simo1988framework,gavrilyuk2016example,ndanou2017piston}. In the nonlinear regime, this split is not a fundamental law of nature for isotropic bodies (as it is in the linear case) but rather introduces a convenient form of the representation of the energy. Formally, this split can also be generalized to the anisotropic case, where it shows, however, some severe deficiencies \cite{federico2010volumetric,murphy2018modelling} from a modeling point of view.\footnote{For example, a perfect sphere made of an \emph{anisotropic} material and subject only to all around uniform pressure would remain spherical for volumetric-isochorically decoupled energies.} 

A primary application of the volumetric-isochoric split is the adaptation of \emph{incompressible} hyperelastic models to (slightly) compressible materials in arbitrary dimension $n\geq2$: For any given isotropic function\footnote{Here, $\SLn$ denotes the \emph{special linear group}, i.e.\ $\SLn=\left\{X\in\Rnn\,|\,\det X=1\right\}.$} $\Winc\col\SLn\to\R$, an elastic energy potential $W$ of the form \eqref{eq:volIsoSplit} can be constructed by simply setting
\[
	W(F) = \Winc\biggl(\frac{F}{\sqrt[n]{\det F}}\biggr) + f(\det F)
	= \Winc(\Ihat_1,\dotsc,\Ihat_{n-1}) + f(\det F)
\]
with an appropriate function $f\col\Rp\to\R$, where $\Ihat_1,\dotsc,\Ihat_{n-1}$ denote the principal invariants of $\frac{F}{\sqrt[n]{\det F}}$. Common examples of such energy functions include the \emph{compressible Neo-Hooke} and the \emph{compressible Mooney-Rivlin} model \cite{Ogden83}.

A typical example of the volumetric-isochoric format is the geometrically nonlinear quadratic Hencky energy \cite{agn_neff2015geometry,Hencky1928,sendova2005strong,agn_martin2018polyconvex} 
\begin{align}
	W_{\rm H}(F)&=\mu\.\norm{\dev_n\log V}^2+\frac{\kappa}{2}\left(\tr\log V\right)^2,\qquad V=\sqrt{F\.F^T},\label{eq:hencky}\\
	\tau_{\rm H}&=2\mu\.\dev_n\log V+\kappa\.\tr(\log V)\cdot\id=2\mu\.\log\frac{V}{\sqrt[3]{\det V}}+\kappa\.\log\det F\cdot\id\notag
\end{align}
as well as its physically nonlinear extension, the exponentiated Hencky-model \cite{agn_neff2015exponentiatedI,agn_neff2014exponentiatedIII}
\begin{align}
	W_{\rm eH}&=\frac{\mu}{k}\.e^{k\.\norm{\dev_n\log V}^2}+\frac{\kappa}{2\khat}\.e^{\khat\left(\tr\log V\right)^2}\,,\label{eq:expHencky}\\
	\tau_{\rm eH}&=2\mu\.e^{k\.\|\dev_n\log\,V\|^2}\cdot\dev_n\log V+\underbrace{\kappa\.e^{\khat\left(\tr\log V\right)^2}\.\tr(\log V)\cdot\id}_{=\kappa\.e^{\khat\left(\log\det F\right)^2}\.\log\det F\cdot\id}\,,\notag
\end{align}
which has been used for stable computation of the inversion of tubes \cite{agn_nedjar2018finite}, the modeling of tire-derived material \cite{agn_montella2015exponentiated} or applications in bio-mechanics \cite{agn_schroder2018exponentiated}. It is well known that \eqref{eq:hencky} and \eqref{eq:expHencky} are overall not rank-one convex \cite{agn_neff2015exponentiatedI} and indeed there does not exist any elastic energy depending on $\norm{\dev_n\log V}^2$, $n\geq 3$ that is rank-one convex \cite{agn_ghiba2015ellipticity}. The situation is, surprisingly, completely different in the planar case: For $n=2$, the energy in equation \eqref{eq:expHencky} is not only rank-one convex, but even polyconvex \cite{agn_neff2015exponentiatedII,agn_ghiba2015exponentiated} if $k\geq\frac{1}{8}$.
%
%
%
%
%
%%%%%%%%%%%%%%%%%%%%%%%%%%%%%%%%%%%%%%%%%%%%%%%%%%%%%%%%%%%%%%%%%
\subsection{Purely volumetric energies}\label{sec:purelyVolumetric}
For some types of energies $W\col\R^{2\times 2}\to\R$, precise conditions are known under which rank-one convexity implies polyconvexity and therefore quasiconvexity \cite{muller1999rank,ball2002openProblems}. One specific family of energies for which the relation between rank-one convexity and polyconvexity is already known is that of purely volumetric energies $W(F)=f(\det F)$. This class describes fluid-like material behavior, i.e.\ shape change is not penalized at all.
\begin{lemma}[\cite{Dacorogna08}]\label{lemma:volumetricConvexity}
	Let $W\col\GLp(2)\to\R$ be of the form $W(F)=f(\det F)$ with $f\col\Rp\to\R$. The following are equivalent:
	\setlength\columnsep{-10em}
	\begin{multicols}{2}
		\begin{itemize}
			\item[i)] $W$ is polyconvex,
			\item[ii)] $W$ is quasiconvex,
			\item[iii)] $W$ is rank-one convex,
			\item[iv)] $f$ is convex on $(0,\infty)$.
		\end{itemize}
	\end{multicols}
\end{lemma}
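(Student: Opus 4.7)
The plan is to close the cycle of implications by first observing that (i) $\Rightarrow$ (ii) $\Rightarrow$ (iii) follows from the well-known convexity hierarchy already recalled in the excerpt, so it suffices to prove (iii) $\Rightarrow$ (iv) $\Rightarrow$ (i).

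For (iv) $\Rightarrow$ (i), I would simply exhibit a polyconvex representation. Define $P\col\R^{2\times 2}\times\R\to\R\cup\{+\infty\}$ by $P(X,\delta)=f(\delta)$ if $\delta>0$ and $P(X,\delta)=+\infty$ otherwise. Since $f$ is convex on $(0,\infty)$, extending it by $+\infty$ on $(-\infty,0]$ yields a convex function of one real variable; because $P$ is independent of $X$ and only depends on the last coordinate through this convex function, $P$ is convex on $\R^5$. Plainly $\widehat{W}(F)=P(F,\det F)$ on $\GLp(2)$ and $\widehat{W}=+\infty$ outside, so $W$ is polyconvex in the sense of the definition adapted to $\GLp(2)$.

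The main step, and the only one requiring a genuine idea, is (iii) $\Rightarrow$ (iv). The key planar observation is that $\det$ is affine along rank-one directions: if $\operatorname{rank}(F_1-F_2)=1$, then $\det(F_1-F_2)=0$, and expanding the quadratic form $\det$ gives
\[
	\det\bigl((1-s)F_1+sF_2\bigr)=(1-s)\det F_1+s\det F_2\qquad\text{for all }s\in[0,1]\,.
\]
Now, given arbitrary $a,b>0$, I would pick the rank-one connected pair $F_1=\operatorname{diag}(a,1)$ and $F_2=\operatorname{diag}(b,1)$ in $\GLp(2)$, so that $\det F_1=a$, $\det F_2=b$ and $F_1-F_2=\operatorname{diag}(a-b,0)$ has rank at most one. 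Applying the rank-one convexity inequality \eqref{eq:rankOneConvexity} to $W(F)=f(\det F)$ and using the affinity of the determinant along this segment yields
\[
	f\bigl((1-t)a+tb\bigr)=W\bigl((1-t)F_1+tF_2\bigr)\leq (1-t)\.W(F_1)+t\.W(F_2)=(1-t)f(a)+t\.f(b)\,,
\]
which is precisely convexity of $f$ on $(0,\infty)$.

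The only mild obstacle I anticipate is keeping track of the extension to $\R^{2\times 2}$ in the definitions: one must verify that the polyconvex representation $P$ constructed above is compatible with the convention that $\widehat{W}\equiv+\infty$ off $\GLp(2)$, but this is automatic from the $+\infty$ branch of $P$. The geometric fact that $\det$ is affine along rank-one lines in two dimensions is the essential ingredient and does not generalize to $n\geq 3$, which is consistent with the delicate nature of the higher-dimensional situation discussed in the introduction.
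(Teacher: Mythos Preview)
The paper does not give its own proof of this lemma; it is stated with a bare citation to Dacorogna and used as a known input. Your argument is correct and is essentially the standard one: the hierarchy gives (i)$\Rightarrow$(ii)$\Rightarrow$(iii), the extension of $f$ by $+\infty$ yields a polyconvex representation for (iv)$\Rightarrow$(i), and a diagonal rank-one segment together with the rank-one affinity of the determinant gives (iii)$\Rightarrow$(iv).

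One correction to your closing remark: the affinity of $\det$ along rank-one lines is \emph{not} a planar phenomenon. For invertible $F$ and any $\xi,\eta\in\R^n$, the matrix determinant lemma gives
\[
	\det(F+t\,\xi\otimes\eta)=\det F\cdot\bigl(1+t\,\langle\eta,F^{-1}\xi\rangle\bigr),
\]
which is affine in $t$ in every dimension (and is precisely why $\det$ is a Null-Lagrangian). Accordingly, the present lemma holds verbatim on $\GLp(n)$ for all $n$; it is only for the Hadamard-type energies in the subsequent lemma that dimensional restrictions enter. So your proof is fine, but the aside about $n\geq 3$ should be dropped.
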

The same holds for the so-called \emph{Hadamard-material} \cite{Dacorogna08,ball1984w1,grabovsky2018rank,dantas2006equivalence}:%Theorem 5.58ii)
\begin{lemma}[\cite{Dacorogna08}]\label{lemma:hadamrdConvexity}
	Let $W\col\GLp(n)\to\R$ of the form $W(F)=\norm{F}^\alpha+f(\det F)$ with $f\col\Rp\to\R$ and $1\leq\alpha<2\.n$. The following are equivalent:
	\setlength\columnsep{-10em}
	\begin{multicols}{2}
		\begin{itemize}
			\item[i)] $W$ is polyconvex,
			\item[ii)] $W$ is quasiconvex,
			\item[iii)] $W$ is rank-one convex,
			\item[iv)] $f$ is convex on $(0,\infty)$.
		\end{itemize}
	\end{multicols}
\end{lemma}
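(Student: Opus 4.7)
The chain (i) $\Rightarrow$ (ii) $\Rightarrow$ (iii) is the standard hierarchy recalled above, so what needs work is (iv) $\Rightarrow$ (i) together with the closing implication (iii) $\Rightarrow$ (iv). For (iv) $\Rightarrow$ (i), I would note that $F \mapsto \|F\|^\alpha$ is already \emph{convex} on $\Rnn$ for every $\alpha \geq 1$ — the Frobenius norm is convex and non-negative, and $x \mapsto x^\alpha$ is convex and non-decreasing on $[0,\infty)$, so the composition is convex — while $F \mapsto f(\det F)$ is polyconvex by the very definition of polyconvexity as soon as $f$ is convex on $(0,\infty)$. The sum of a convex and a polyconvex function is polyconvex.

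The substance of the lemma therefore lies in (iii) $\Rightarrow$ (iv). The plan is to probe rank-one convexity along the one-parameter family
\[
	F(t) = \operatorname{diag}(t,s,s,\ldots,s)\,, \qquad t,s > 0\,,
\]
whose $t$-derivative is the rank-one matrix $e_1 \otimes e_1$ and whose determinant $t\.s^{n-1}$ is strictly positive, so that every convex combination stays inside $\GLp(n)$. Rank-one convexity of $W$ thus forces the scalar function
\[
	t \;\longmapsto\; (t^2 + (n-1)\.s^2)^{\alpha/2} + f(t\.s^{n-1})
\]
to be convex on $(0,\infty)$ for every fixed $s > 0$. Substituting $u = t\.s^{n-1}$ and writing out the convexity inequality at three points $u_0,\, u_1,\, u_\tau = \tau u_0 + (1-\tau) u_1$ rearranges to
\[
	f(u_\tau) \;\leq\; \tau\. f(u_0) + (1-\tau)\. f(u_1) + \Delta_s(u_0,u_1,\tau)\,,
\]
where $\Delta_s$ collects the convexity excess contributed by the $\|\cdot\|^\alpha$ term.

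The decisive — and only delicate — step is then to let $s \to \infty$. Expanding
\[
	\bigl((u/s^{n-1})^2 + (n-1)\.s^2\bigr)^{\alpha/2} = s^\alpha\bigl(n-1 + u^2/s^{2n}\bigr)^{\alpha/2}
\]
in the small parameter $u^2/s^{2n}$ produces a $u$-independent leading term of order $s^\alpha$, which cancels identically in $\Delta_s$, plus a quadratic-in-$u$ correction of order $s^{\alpha-2n}$. Hence $\Delta_s(u_0,u_1,\tau)$ behaves like $C_{n,\alpha}\,\tau(1-\tau)(u_0-u_1)^2\,s^{\alpha-2n}$, which vanishes in the limit \emph{precisely} when $\alpha < 2n$. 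Passing to that limit yields the convexity inequality for $f$ on $(0,\infty)$ and closes the chain. The main obstacle is exactly this scaling balance: the auxiliary parameter $s$ must simultaneously drive the norm contribution to be asymptotically negligible while keeping $u = t\.s^{n-1}$ free to range over all of $(0,\infty)$, and it is this tension that pins down the sharp threshold $\alpha < 2n$.
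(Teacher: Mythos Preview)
The paper does not actually supply a proof of this lemma; it is quoted as a known result from \cite{Dacorogna08}, so there is no in-text argument to compare against. Your plan is essentially the standard one found in that reference and is correct in outline: (iv)\,$\Rightarrow$\,(i) is immediate since $F\mapsto\norm{F}^\alpha$ is convex for $\alpha\geq 1$ and $F\mapsto f(\det F)$ is polyconvex whenever $f$ is convex, and the closing step (iii)\,$\Rightarrow$\,(iv) via the rank-one line through $\operatorname{diag}(t,s,\dots,s)$ together with the scaling limit $s\to\infty$ is exactly the mechanism that isolates the convexity of $f$ and explains the sharp threshold $\alpha<2n$. The expansion you give is accurate: after factoring $s^\alpha$ out of $(t^2+(n-1)s^2)^{\alpha/2}$ and passing to $u=ts^{n-1}$, the $u$-dependent correction enters at order $s^{\alpha-2n}$, so the convexity defect $\Delta_s$ vanishes precisely when $\alpha<2n$. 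No gap.
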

%
%
%
%
%
%%%%%%%%%%%%%%%%%%%%%%%%%%%%%%%%%%%%%%%%%%%%%%%%%%%%%%%%%%%%%%%%%
\subsection{Purely isochoric energies}\label{sec:purelyIsochoric}
In a recent contribution \cite{agn_martin2015rank}, we present a similar statement for isochoric energies, also called \emph{conformally invariant energies}, i.e.\ functions $W\col\GLp(2)\to\R$ with
\begin{equation}\label{eq:definitionConformalInvariance}
	W(A\.F\.B) = W(F) \qquad\text{for all}\quad A,B\in\CSO(2)\,,
\end{equation}
where
\begin{equation}
	\CSO(2)\colonequals\R^+\cdot\SO(2)=\{a\.R\in\GLp(2) \setvert a\in(0,\infty)\,,\; R\in\SO(2)\}\label{eq:CSO}
\end{equation}
denotes the \emph{conformal special orthogonal group}. This requirement can equivalently be expressed as
\begin{equation}\label{eq:objectiveIsotropicIsochoric}
	W(R_1F)=W(F)=W(FR_2)\,,\quad W(aF)=W(F) \qquad\text{for all}\quad R_1,R_2\in\SO(2)\,,\; a\in(0,\infty)\,,
\end{equation}
i.e.\ left- and right-invariance under the special orthogonal group $\SO(2)$ and invariance under scaling. An energy $W$ satisfying $W(aF)=W(F)$ is more commonly known as \emph{isochoric}. Following Iwaniec et al. \cite{astala2008elliptic}, we introduce the \emph{(nonlinear) distortion function} or \emph{outer distortion}
\begin{align}
	\K\col\GLp(2)\to\R\,,\qquad\K(F)\colonequals\frac{1}{2}\.\frac{\norm F^2}{\det F}=\frac{\lambda_1^2+\lambda_2^2}{2\.\lambda_1\lambda_2}=\frac{1}{2}\left(\frac{\lambda_1}{\lambda_2}+\frac{\lambda_2}{\lambda_1}\right),\label{eq:distortion}
\end{align}
where $\norm{\,.\,}$ denotes the Frobenius matrix norm with $\norm{F}^2=\sum_{i,j=1}^2 F_{ij}^2$ as well as the \emph{linear distortion} or \emph{(large) dilatation}
\begin{align}
	K(F)\col\GLp(2)\to\R\,,\qquad K(F)=\frac{\opnorm{F}^2}{\det F}=\frac{\lambdamax^2}{\lambdamin\lambdamax}=\frac{\lambdamax}{\lambdamin}\,,\label{eq:dilatation}
\end{align}
where $\opnorm{F}=\sup_{\norm{\xi}=1}\norm{F\.\xi}_{\R^2}=\lmax$ denotes the operator norm (i.e.\ the largest singular value) of $F$. 

\begin{lemma}[\cite{agn_martin2015rank}]
\label{lemma:representations}
	Let $W\col\GLp(2)\to\R$ be conformally invariant. Then there exist uniquely determined functions $g\col\Rp\times\Rp\to\R$,\; $h\col\Rp\to\R$,\; $\hhat\col[1,\infty)\to\R$ and $\Psi\col[1,\infty)\to\R$ such that
	\begin{equation}\label{eq:representationFormulae}
		W(F) = g(\lambda_1,\lambda_2) = h\left(\frac{\lambda_1}{\lambda_2}\right) = \hhat(K(F)) = \Psi(\K(F))
	\end{equation}
	for all $F\in\GLp(2)$ with (not necessarily ordered) singular values $\lambda_1,\lambda_2$. Furthermore,
	\begin{equation}\label{eq:representationRequirements}
		h(t)=h\left(\frac1t\right),\quad g(x,y)=g(y,x) \quad\text{ and }\quad g(ax,ay)=g(x,y)
	\end{equation}
	for all $a,t,x,y\in(0,\infty)$.
\end{lemma}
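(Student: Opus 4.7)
The plan is to exploit the singular value decomposition in $\GL^+(2)$ together with the three pieces of $\CSO(2)$-bi-invariance (left-$\SO(2)$, right-$\SO(2)$, and scaling) in sequence, peeling off one degree of freedom at a time and defining the four representatives successively. Uniqueness will come for free from the fact that each of the four functions is obtained by restricting $W$ to a particular $2$-parameter, $1$-parameter, or $1$-parameter subfamily on which $W$ is constant along the remaining gauge orbits.

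First I would invoke the SVD: for any $F\in\GL^+(2)$ one can write $F=U\,\diag(\lambda_1,\lambda_2)\,V^T$ with $U,V\in\SO(2)$ and $\lambda_1,\lambda_2>0$. (The textbook SVD only guarantees $U,V\in\OO(2)$, but since $\det F>0$ either both factors lie in $\SO(2)$ or both have determinant $-1$; in the latter case one absorbs $\diag(1,-1)$ into $U$ and $V$, which leaves $\diag(\lambda_1,\lambda_2)$ unchanged under the resulting conjugation.) Left- and right-$\SO(2)$-invariance then give $W(F)=W(\diag(\lambda_1,\lambda_2))$, so the definition $g(\lambda_1,\lambda_2)\colonequals W(\diag(\lambda_1,\lambda_2))$ yields the first representation. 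The symmetry $g(x,y)=g(y,x)$ follows by conjugating $\diag(\lambda_1,\lambda_2)$ with the rotation $J=\bigl(\begin{smallmatrix}0&1\\-1&0\end{smallmatrix}\bigr)\in\SO(2)$, which swaps the diagonal entries, while scaling invariance $g(ax,ay)=g(x,y)$ follows from $a\,\id\in\CSO(2)$.

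Next I would set $h(t)\colonequals g(t,1)$; the scaling relation $g(x,y)=g(x/y,1)=h(x/y)$ makes the second representation $W(F)=h(\lambda_1/\lambda_2)$ tautological, and the symmetry $h(1/t)=h(t)$ drops out of $g(x,y)=g(y,x)$. For the third representative I would restrict $h$ to $[1,\infty)$ and define $\hhat\colonequals h|_{[1,\infty)}$; since $K(F)=\lambdamax/\lambdamin\in[1,\infty)$ and $h$ is invariant under $t\mapsto 1/t$, the identity $W(F)=\hhat(K(F))$ holds irrespective of which singular value is larger. Finally, observing the algebraic identity $\K(F)=\tfrac12\bigl(K(F)+1/K(F)\bigr)$ and that $\phi\col[1,\infty)\to[1,\infty)$, $\phi(K)=\tfrac12(K+1/K)$ is a strictly increasing bijection, I would define $\Psi(s)\colonequals\hhat\bigl(\phi^{-1}(s)\bigr)$, giving the fourth representation.

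The main obstacle, such as it is, is mostly bookkeeping: ensuring the SVD can be chosen with both orthogonal factors in $\SO(2)$ (handled by the sign-flip trick above), and verifying that each representative function is indeed well defined on the stated domain (in particular that $\hhat$ and $\Psi$ are unambiguously defined despite $\lambda_1,\lambda_2$ being unordered in $g$). Uniqueness is then immediate: $g$ is pinned down as the restriction of $W$ to diagonal matrices, $h$ as the restriction of $g$ to $\{y=1\}$, $\hhat$ as $h|_{[1,\infty)}$, and $\Psi$ via the bijection $\phi$.
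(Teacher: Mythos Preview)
Your argument is correct and complete. The paper itself does not supply a proof of this lemma; it is stated with a citation to \cite{agn_martin2015rank} and used as an auxiliary result. Your proof is the standard one: reduce to diagonal matrices via an $\SO(2)$-$\SO(2)$ singular value decomposition (with the sign-flip to stay in the special orthogonal group), then peel off the scaling invariance to pass from $g$ to $h$, and finally use the bijection $K\mapsto\tfrac12(K+K^{-1})$ on $[1,\infty)$ to pass between $\hhat$ and $\Psi$. Uniqueness is indeed automatic from the definitions as restrictions, as you note.
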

Note that the isotropy requirement \eqref{eq:representationRequirements} implies that $h$ is already uniquely determined by its values on $[1,\infty)$. While we do distinguish between $\lambda_1,\lambda_2$ as the non-ordered singular values and \mbox{$\lmax\geq\lmin$} as the ordered singular values, we will interchange $h$ and $\hhat$ to simplify some notations here because $h(t)=\hhat(t)$ for all $t\geq 1$.
\begin{lemma}[\cite{agn_martin2015rank}]\label{lemma:convexityCharacterization}
	Let $W\col\GLp(2)\to\R$ be conformally invariant,
	and let $g\col\Rp\times\Rp\to\R$,\; $h\col\Rp\to\R$ and $\Psi\col[1,\infty)\to\R$ denote the uniquely determined functions with
	\[
		W(F) = g(\lambda_1,\lambda_2) = h\left(\frac{\lambda_1}{\lambda_2}\right) = \Psi(\K(F))
	\]
	for all $F\in\GLp(2)$ with singular values $\lambda_1,\lambda_2$, where $\K(F)=\frac12\frac{\norm{F}^2}{\det F}$. Then the following are equivalent:
	\setlength\columnsep{-7em}
	\begin{multicols}{2}
		\begin{itemize}
			\item[i)] $W$ is polyconvex,
			\item[ii)] $W$ is quasiconvex,
			\item[iii)] $W$ is rank-one convex,
			\item[iv)] $g$ is separately convex,
			\item[v)] $h$ is convex on $(0,\infty)$,
			\item[vi)] $h$ is convex and non-decreasing on $[1,\infty)$.
		\end{itemize}
	\end{multicols}
\end{lemma}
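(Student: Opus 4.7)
The chain $i)\Rightarrow ii)\Rightarrow iii)$ is the classical hierarchy, and $iii)\Rightarrow iv)$ I would get by specializing rank-one convexity to the diagonal family $F=\mathrm{diag}(\lambda_1,\lambda_2)$: the tangent directions $e_i\otimes e_i$ have rank one, so convexity of $\lambda_i\mapsto g(\lambda_1,\lambda_2)$ for fixed other coordinate follows, and the symmetry $g(x,y)=g(y,x)$ closes separate convexity. The remaining work splits naturally into a one-variable reduction and a polyconvex construction.

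For $iv)\Leftrightarrow v)$, since $g(\lambda_1,\lambda_2)=h(\lambda_1/\lambda_2)$ and, for each fixed $\lambda_2>0$, $\lambda_1\mapsto\lambda_1/\lambda_2$ is a positive linear reparametrization, separate convexity of $g$ is equivalent to convexity of $h$ on $(0,\infty)$. For $v)\Leftrightarrow vi)$ I would use the symmetry $h(t)=h(1/t)$: convexity on $(0,\infty)$ together with this reflective symmetry about $t=1$ forces a global minimum at $t=1$ and hence monotonicity on $[1,\infty)$. Conversely, given $h$ convex and non-decreasing on $[1,\infty)$, extending it to $(0,1)$ by $h(t):=h(1/t)$, a direct calculation gives
\[
	\frac{\mathrm{d}^2}{\mathrm{d}t^2}\,h(1/t) = u^4\,h''(u) + 2\,u^3\,h'(u),\qquad u=1/t>1,
\]
which is non-negative under the assumed convexity and monotonicity; convexity across the junction $t=1$ reduces to $h'(1^+)\geq 0$, exactly the monotonicity condition.

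The genuinely non-trivial step is $vi)\Rightarrow i)$, where I would use the linear distortion
\[
	K(F)=\frac{\lmax}{\lmin}=\frac{\opnorm{F}^2}{\det F}
\]
as a polyconvex building block. Since the operator norm is convex, $\opnorm{F}^2$ is convex and non-negative on $\R^{2\times 2}$, and the perspective construction yields joint convexity of $P(F,\delta):=\opnorm{F}^2/\delta$ on $\R^{2\times 2}\times(0,\infty)$; extending by $+\infty$ on $\{\delta\leq 0\}$ keeps $P$ convex on all of $\R^{2\times 2}\times\R$, with $P(F,\det F)=K(F)$ on $\GLp(2)$. Finally I would extend $h$ to a convex non-decreasing function $\widetilde h$ on $\R$ by setting $\widetilde h(x):=h(1)$ for $x\leq 1$; as the composition of a convex non-decreasing function with a convex function is convex, $\widetilde h\circ P$ is convex on $\R^{2\times 2}\times\R$ and satisfies $\widetilde h(P(F,\det F))=h(K(F))=W(F)$ on $\GLp(2)$ while equalling $+\infty$ off $\GLp(2)$, giving the required polyconvex representative.

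The delicate point is this very choice of representation. The seemingly more natural candidate $\Psi(\K(F))$ with $\K(F)=\norm{F}^2/(2\det F)$, which is itself polyconvex by the same perspective argument, does not work: the change of variables $K=\K+\sqrt{\K^2-1}$ is concave in $\K$, so a convex non-decreasing $h$ need not yield a convex $\Psi$. Switching the numerator from the Frobenius norm to the operator norm is exactly what allows convexity and monotonicity of $h$ to pass to polyconvexity of $W$ through the composition.
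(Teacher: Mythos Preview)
The paper states this lemma by citation to \cite{agn_martin2015rank} and gives no proof of its own, so there is no in-paper argument to compare against. Your outline is correct and coincides with the approach in that reference: the decisive step $vi)\Rightarrow i)$ goes through the perspective of $\opnorm{F}^2$, and your remark that one must use the linear distortion $K(F)=\opnorm{F}^2/\det F$ rather than $\K(F)$ as the polyconvex building block (because the change of variables $K=\K+\sqrt{\K^2-1}$ is concave in $\K$) is exactly the point.

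One small inaccuracy: the claim that $\widetilde h\circ P$ equals $+\infty$ off $\GLp(2)$ fails whenever $h$ is bounded above, since then $\widetilde h(+\infty)=\sup h<\infty$. This does not damage the conclusion, as one may simply add the convex indicator of the half-space $\{\delta>0\}$ to the representative to force the value $+\infty$ for $\det F\leq 0$; but the sentence as written needs this amendment. Your second-derivative computation in $vi)\Rightarrow v)$ also tacitly assumes $h\in C^2$; the passage to general convex $h$ via one-sided derivatives is routine.
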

%
%
%
%
%
%%%%%%%%%%%%%%%%%%%%%%%%%%%%%%%%%%%%%%%%%%%%%%%%%%%%%%%%%%%%%%%%%
\section{Least rank-one convex candidates}\label{sec:leastRankOne}
We aim to discuss Morrey's conjecture in this setting, i.e.:
\begin{itemize}
	\item Is every rank-one convex energy with an additive volumetric-isochoric split in 2D already \mbox{quasiconvex?}
\end{itemize}
For this purpose we introduce the concept of \enquote{least} rank-one convex functions: If an energy function $W_0(F)=W_1(F)+W_2(F)$ can be written as the sum of two individual rank-one convex energies, it is sufficient to test $W_1,W_2$ instead of $W_0$ for quasiconvexity; If $W_0$ would be an example for Morrey's conjecture, i.e.\ not quasiconvex, at least one of the two summands $W_1$ or $W_2$ has to be not quasiconvex as well, since the sum of two individual quasiconvex terms remains quasiconvex. Consequently we will only test those rank-one convex functions for quasiconvexity which cannot be split into a convex combination of individual rank-one convex functions without considering the set of \emph{Null-Lagrangians}. In other words, we are only interested in the verticies of the set of rank-one convex energy function and call these functions \emph{least rank-one convex candidates}. If there exists a rank-one convex energy function which is not quasiconvex, it must exist a least rank-one convex function which is not quasiconvex as well.

Therefore, the main goal of this section is to reduce the class of energies with an additive volumetric-isochoric split to a set of least rank-one convex representatives as candidates to check for quasiconvexity. We identify these least rank-one convex energies by evaluating the set of coupled one-dimensional inequalities from Theorem \ref{theorem:mainVolisLog2} and looking for non-strictness. An overview of this section is given on page \pageref{fig:morreySummary}.
%
%
%
%
%%%%%%%%%%%%%%%%%%%%%%%%%%%%%%%%%%%%%%%%%%%%%%%%%%%%%%%%%%%%%%%%%
\subsection{Rank-one convexity}
For objective and isotropic planar energies with additive volumetric-isochoric split there exists a unique representation \cite[Lemma 3.1]{agn_martin2015rank}
\begin{align}
	W(F)=h\bigg(\frac{\lambda_1}{\lambda_2}\bigg)+f(\lambda_1\lambda_2)\,,\qquad h,f\col\Rp\to\R\,,\quad h(t)=h\bigg(\frac{1}{t}\bigg)\qquad\text{for all}\quad t\in(0,\infty)\,,\label{eq:definitionOfh}
\end{align}
where $\lambda_1,\lambda_2>0$ denote the singular values of $F$ and $h,f$ are given real-valued functions. In view of Lemma \ref{lemma:hadamrdConvexity} and \ref{lemma:convexityCharacterization} it is now tempting to believe that rank-one convexity conditions on $\Wiso$ and $\Wvol$ also allow for a sort of split and can be reduced to separate statements on $h$ and $f$. However, this is not even the case in planar linear elasticity, where $\Wlin$ from equation \eqref{eq:linEla} is rank-one convex in the displacement gradient $\nabla u$ if and only if $\mu\geq 0$ and $\mu+\kappa\geq 0$. This means that rank-one convexity of $\Wlin$ implies that $\Wliniso$ is rank-one convex, whereas $\Wlinvol$ might not be rank-one convex, since e.g.\ $\kappa<0$ is allowed. We are therefore prepared to expect some coupling in the conditions for $h$ and $f$. 

For rank-one convexity on $\GLp(2)$, Knowles and Sternberg \cite{knowles1976failure,knowles1978failure} (see also \cite[p.~308]{silhavy1997mechanics}) established the following important and useful criterion.
\begin{lemma}[{Knowles and Sternberg \cite{knowles1976failure,knowles1978failure}, cf.\  \cite{silhavy1997mechanics,parry1995planar,dacorogna01,vsilhavy2002convexity,vsilhavy1999isotropic,aubert1987faible,aubert1995necessary}}]\label{lemma:knowlesSternberg}
	Let $W\col\GLp(2)\to\R$ be an objective-isotropic function of class $C^2$ with the representation in terms of the singular values of the deformation gradient $F$ given by $W(F)=g(\lambda_1,\lambda_2)$, where $g\in C^2(\Rp^2,\R)$. Then $W$ is rank-one convex if and only if the following five conditions are satisfied:
	\begin{itemize}
		\setlength\itemsep{-4pt}
		\item[i)] \qquad$\displaystyle g_{xx}\geq 0 \text{ and } g_{yy}\geq 0\qquad \text{for all}\quad x,y\in (0,\infty)\,,$\hfill (separate convexity)
		\item[ii)] \qquad$\displaystyle\frac{x\.g_x-y\.g_y}{x-y}\geq 0\qquad\text{for all}\quad x,y\in (0,\infty)\,,\; x\neq y\,,$\hfill (Baker-Ericksen inequalities)
		\item[iii)] \qquad$\displaystyle g_{xx}-g_{xy}+\frac{g_x}{x}\geq 0\quad\text{and}\quad g_{yy}-g_{xy}+\frac{g_y}{y}\geq 0\qquad\text{for all}\quad x,y\in (0,\infty)\,,\; x=y\,,$
		\item[iv)]\qquad$\displaystyle\sqrt{g_{xx}\,g_{yy}}+g_{xy}+\frac{g_x-g_y}{x-y}\geq 0\qquad\text{for all}\quad x,y\in (0,\infty)\,,\; x\neq y\,,$
		\item[v)]\qquad$\displaystyle\sqrt{g_{xx}\,g_{yy}}-g_{xy}+\frac{g_x+g_y}{x+y}\geq 0\qquad\text{for all}\quad x,y\in (0,\infty)\,.$
	\end{itemize}
	Furthermore, if all the above inequalities are strict, then $W$ is strictly rank-one convex.	
\end{lemma}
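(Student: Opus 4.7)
The plan is to pass to the pointwise Legendre-Hadamard inequality, reduce it to diagonal $F$ by invariance, and then analyze the resulting quadratic form on the rank-one cone via an AM-GM estimate to extract the five conditions.

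\textbf{Step 1 --- reduction to diagonal.} Because $W \in C^2$, rank-one convexity is equivalent to the Legendre-Hadamard inequality $D^2W(F).(\xi\otimes\eta,\xi\otimes\eta) \geq 0$ for all $F \in \GLp(2)$ and $\xi,\eta \in \R^2$ (Definition~\ref{def:rank1convexity}). For any $F = Q\,\operatorname{diag}(x,y)\,R^T$ with $Q, R \in \SO(2)$, frame indifference and isotropy give
\begin{equation*}
	W(F + t\.\xi\otimes\eta) \;=\; W\bigl(\operatorname{diag}(x,y) + t\.(Q^T\xi)\otimes(R\eta)\bigr)\,,
\end{equation*}
and since $(\xi,\eta) \mapsto (Q^T\xi, R\eta)$ is a bijection on rank-one pairs, it suffices to verify the inequality at $F_0 = \operatorname{diag}(x,y)$ for all rank-one $H = \xi\otimes\eta$.

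\textbf{Step 2 --- explicit Hessian.} I would expand the singular values $\lambda_1(t), \lambda_2(t)$ of $F_0 + tH$ to second order in $t$ via the eigenvalues of $(F_0+tH)^T(F_0+tH)$; the diagonal entries $h_{11}, h_{22}$ perturb at first order in $t$, while the off-diagonal entries $h_{12}, h_{21}$ contribute only at second order. Substituting into $W = g(\lambda_1,\lambda_2)$ and collecting $t^2$-terms yields, for $x \neq y$,
\begin{equation*}
	D^2 W(F_0).(H,H) \;=\; g_{xx}\,h_{11}^2 + 2g_{xy}\,h_{11}h_{22} + g_{yy}\,h_{22}^2 + A\,(h_{12}+h_{21})^2 + B\,(h_{12}-h_{21})^2,
\end{equation*}
with $A = \tfrac{1}{2}\tfrac{g_x-g_y}{x-y}$ and $B = \tfrac{1}{2}\tfrac{g_x+g_y}{x+y}$; the case $x = y$ is recovered by passing to the limit using the symmetry $g(x,y) = g(y,x)$.

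\textbf{Step 3 --- extraction of (i)--(v).} Writing $h_{ij} = \xi_i\eta_j$ and regarding the expression as a quadratic form in $\eta$ for fixed $\xi$, I would read off a $2\times 2$ symmetric matrix $M(\xi)$, so that the Legendre-Hadamard condition becomes $M(\xi) \succeq 0$ for every $\xi$. Testing $\xi = e_1, e_2$ immediately gives (i) and, after identifying $A + B = \tfrac{x g_x - y g_y}{x^2-y^2}$, the Baker-Ericksen inequality (ii). For general $\xi$, the extra requirement $\det M(\xi) \geq 0$ is a biquadratic form in $(\xi_1^2, \xi_2^2)$ whose minimum over the positive orthant, computed via the elementary estimate "$\alpha u^2 + \gamma u v + \beta v^2 \geq 0$ for all $u,v \geq 0$ iff $\gamma \geq -2\sqrt{\alpha\beta}$" (with $\alpha, \beta \geq 0$), rearranges to $|g_{xy} + A - B| \leq \sqrt{g_{xx}g_{yy}} + (A + B)$; the two signs split into precisely (iv) and (v). Finally, (iii) appears as the limit $y \to x$ of (v), using L'H\^opital and the symmetry $g(x,y) = g(y,x)$ to evaluate $\tfrac{g_x - g_y}{x - y} \to g_{xx} - g_{xy}$ and $\tfrac{g_x + g_y}{x + y} \to g_x/x$.

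\textbf{Main obstacle.} The most delicate point is Step~2: the off-diagonal entries $h_{12}, h_{21}$ contribute to $\lambda_1, \lambda_2$ only at second order, and the resulting cross terms conspire to produce the very particular combination $(h_{12}\pm h_{21})^2$ with two distinct prefactors $A, B$ involving $(g_x \mp g_y)/(x \mp y)$. Making this precise structure manifest is what forces (iv) and (v) to be two genuinely independent inequalities rather than consequences of separate convexity and Baker-Ericksen alone, and any attempt to shortcut the second-order eigenvalue perturbation typically loses the sign information needed to split the $\pm$ cases.
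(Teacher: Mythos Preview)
The paper does not prove this lemma at all: it is stated with attribution to Knowles and Sternberg and a list of further references, and then used as a black box to derive Theorem~\ref{theorem:mainVolisLog2}. There is therefore nothing in the paper to compare your argument against.

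That said, your sketch is the standard route and is essentially correct. The reduction to diagonal $F_0=\operatorname{diag}(x,y)$ in Step~1 is exactly how the classical proofs proceed; the Hessian formula in Step~2 with the two off-diagonal coefficients $A=\tfrac12\tfrac{g_x-g_y}{x-y}$ and $B=\tfrac12\tfrac{g_x+g_y}{x+y}$ attached to $(h_{12}\pm h_{21})^2$ is the well-known expression one finds e.g.\ in \v Silhav\'y; and the optimisation in Step~3 over $(u,v)=(\xi_1^2,\xi_2^2)\in\Rp^2$ via the elementary criterion $\gamma\geq -2\sqrt{\alpha\beta}$ does indeed collapse the determinant condition to $|g_{xy}+A-B|\leq\sqrt{g_{xx}g_{yy}}+(A+B)$, whose two signs are precisely (iv) and (v). One small wrinkle: your last sentence conflates the limits of (iv) and (v) at $x=y$. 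Condition (iii) is the limit of (v) alone (since $\sqrt{g_{xx}g_{yy}}\to g_{xx}$ and $\tfrac{g_x+g_y}{x+y}\to g_x/x$ by symmetry), while the limit of (iv) degenerates to $2g_{xx}\geq0$, which is already (i). This does not affect the correctness of the argument, and your identification of the second-order eigenvalue perturbation as the crux is apt.
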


In a recent contribution \cite{agn_voss2019volIsLog} we applied the criterion by Knowles and Sternberg to volumetric-isochorically split energies: using weakened regularity assumptions \cite{agn_martin2019regularity} the following conditions can be established.
\begin{theorem}[\cite{agn_voss2019volIsLog,agn_martin2019regularity}]\label{theorem:mainVolisLog2}
	Let $W\col\GLp(2)\to\R$ be an objective-isotropic function with 
	\[
		W(F)=\ghat(\lmax,\lmin)=\hhat\left(\frac{\lmax}{\lmin}\right)+f(\lmax\lmin)\qquad\text{for all}\quad F\in\GLp(2)
	\]
	and ordered singular values $\lmax\geq\lmin$, where $\hhat\in C^2((1,\infty);\R)$ and $f\in C^2(\Rp;\R)$. We write $h\col\Rp\to\R$ with $h(t)\colonequals\hhat(t)$ for $t\geq1$ and $h(t)\colonequals\hhat\bigl(\frac1t\bigr)$ for $t<1$. Let
	\[
		h_0=\inf_{t\in(1,\infty)}t^2\.h''(t)\qquad\text{and}\qquad f_0=\inf_{z\in(0,\infty)}z^2\.f''(z)\,.
	\]
	Then $W$ is rank-one convex if and only if
	\begin{itemize}
		\item[1)] \qquad$\displaystyle h_0+f_0\geq 0\,,$\qquad (separate convexity)
		\item[2)] \qquad$\displaystyle h'(t)\geq 0\,,$\qquad (Baker-Ericksen inequality)
		\item[3)]\qquad$\displaystyle \frac{2\.t}{t-1}\.h'(t)-t^2\.h''(t)+f_0\geq 0$\quad or\quad $\displaystyle a(t)+\left[b(t)-c(t)\right]f_0\geq 0\,,$
		\item[4)]\qquad$\displaystyle \frac{2\.t}{t+1}\.h'(t)+t^2\.h''(t)-f_0\geq 0$\quad or\quad $\displaystyle a(t)+\left[b(t)+c(t)\right]f_0\geq 0$
	\end{itemize}
	for all $t>1$, where
	\begin{align*}
		a(t)&=t^2(t^2-1)\.h'(t)h''(t)-2t\.h'(t)^2,\qquad b(t)=(t^2+3)\.h'(t)+2t\.(t^2+1)\.h''(t)\,,\\
		c(t)&=4t\.\bigl(h'(t)+t\.h''(t)\bigr).
	\end{align*}
\end{theorem}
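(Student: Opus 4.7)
The plan is to apply the Knowles--Sternberg criterion (Lemma \ref{lemma:knowlesSternberg}) to the representation $g(x,y) = h(x/y) + f(xy)$, valid for all $x,y>0$ thanks to the symmetry $h(t)=h(1/t)$, and to change variables to $t = x/y$, $z = xy$, in which the isochoric and volumetric parts decouple. The same symmetry forces $h'(1) = 0$, so all five Knowles--Sternberg conditions can be written purely for $t > 1$, which is also the range where $\hhat$ is assumed $C^2$; extension to $t<1$ is by symmetry, and the diagonal $t=1$ is handled by an approximation argument of the type used in \cite{agn_martin2019regularity}, which accounts for the reduced regularity.

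First I would compute $g_x,g_y,g_{xx},g_{yy},g_{xy}$ and the scaled combinations $x^2 g_{xx}$, $y^2 g_{yy}$, $xy\,g_{xy}$, $xg_x-yg_y$ and $xy(g_x\pm g_y)/(x\pm y)$. These turn out to depend on $(t,z)$ only through the two invariants $P := t^2 h''(t) + z^2 f''(z)$ and $Q := 2 t h'(t)$, together with a single term $z f'(z)$ that cancels in every combination that matters. The Baker--Ericksen inequality reduces immediately to $xg_x-yg_y = 2 t h'(t) \geq 0$, which is condition 2. Separate convexity $g_{xx},g_{yy} \geq 0$ becomes $P\geq 0$ and $P+Q\geq 0$; the second is already implied by the first combined with condition 2, and $\inf_{t,z} P = h_0+f_0$ then yields condition 1. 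The Knowles--Sternberg condition iii) collapses on the diagonal to $h''(1)\geq 0$ and is subsumed by the rest.

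The substantive work is in KS iv) and v). Multiplying iv) by $xy>0$ gives, after simplification,
\[
    \sqrt{P(P+Q)} + P - 2 t^2 h''(t) + \frac{2 t h'(t)}{t-1} \;\geq\; 0 \qquad \text{for every } z>0,
\]
and similarly for v) with $-P+2t^2 h''(t) + \frac{2 t h'(t)}{t+1}$. Because the left-hand side is strictly increasing in $P$ (follows from $(2P+Q)^2\geq 4P(P+Q)$ under $P,P+Q\geq 0$), the worst case is $P_0 := t^2 h''(t) + f_0$. Rewriting the requirement as $\sqrt{P_0(P_0+Q)} \geq M - P_0$ with $M := 2 t^2 h''(t) - \frac{2 t h'(t)}{t-1}$ splits into two cases: either $P_0 \geq M$, which is exactly the first alternative $\frac{2t}{t-1} h'(t) - t^2 h''(t) + f_0 \geq 0$ of condition 3; or $P_0 < M$, in which case both sides are non-negative and may be squared to give $P_0(Q + 2M) \geq M^2$. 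The parallel analysis of KS v) yields condition 4, with the sign $b(t)+c(t)$.

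The main obstacle will be the algebraic identification of the squared inequality $P_0(Q+2M) \geq M^2$ with $a(t)+(b(t)-c(t))\,f_0\geq 0$. Expanding and collecting in powers of $f_0$, the constant term becomes $t^2(t-1)(t+1) h'(t) h''(t) - 2 t h'(t)^2 = a(t)$ once the $h''(t)^2$ pieces cancel and one uses $(t-3)+4 = t+1$, while the coefficient of $f_0$ becomes $(t-1)[(t-3)h'(t) + 2t(t-1) h''(t)]$, which via the factorisations $t^2 - 4t + 3 = (t-1)(t-3)$ and $2t(t^2+1) - 4t^2 = 2t(t-1)^2$ is exactly $b(t)-c(t)$. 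The analogous identities $t^2+4t+3 = (t+1)(t+3)$ and $2t(t^2+1)+4t^2 = 2t(t+1)^2$ then produce $b(t)+c(t)$ for condition 4, completing the reduction.
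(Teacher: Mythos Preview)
The paper does not actually prove Theorem \ref{theorem:mainVolisLog2}; it merely states the result and cites the external references \cite{agn_voss2019volIsLog,agn_martin2019regularity}, noting that the criterion is obtained by applying Knowles--Sternberg (Lemma \ref{lemma:knowlesSternberg}) to the split form together with a regularity argument. Your proposal is therefore not competing with a proof in the paper but rather reconstructing the one the paper points to, and it does so along precisely the indicated lines.

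Your derivation is correct in its essentials. The substitutions $t=x/y$, $z=xy$ and the introduction of $P=t^2h''(t)+z^2f''(z)$, $Q=2th'(t)$ are exactly the right organizing quantities; the cancellation of $zf'(z)$ in KS iv) and v) happens as you say; the monotonicity in $P$ via $(2P+Q)^2\geq 4P(P+Q)$ is valid; and the algebraic identifications $P_0(Q+2M)-M^2=\tfrac{2t}{(t-1)^2}\bigl[a(t)+(b(t)-c(t))f_0\bigr]$ for KS iv) and $P_0(Q+2N)-N^2=\tfrac{2t}{(t+1)^2}\bigl[a(t)+(b(t)+c(t))f_0\bigr]$ for KS v) (with $N=2t^2h''+\tfrac{2t}{t+1}h'$) check out, including the cancellation of the $(h'')^2$ terms and the factorisations $(t-1)(t-3)$, $2t(t-1)^2$, $(t+1)(t+3)$, $2t(t+1)^2$ you mention.

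The one place where your sketch is genuinely incomplete is the diagonal $t=1$. Since $\hhat$ is only assumed $C^2$ on $(1,\infty)$, the function $g$ need not be $C^2$ on $\{x=y\}$, so Lemma \ref{lemma:knowlesSternberg} does not apply directly, and KS condition iii) (which you correctly reduce to $h''(1)\geq 0$) is not even well-posed a priori. You flag this and defer to the approximation argument of \cite{agn_martin2019regularity}; that is the honest thing to do, but be aware that this is where the actual technical content beyond routine calculation sits, and a full proof would have to supply it.
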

%
%
%
%
%%%%%%%%%%%%%%%%%%%%%%%%%%%%%%%%%%%%%%%%%%%%%%%%%%%%%%%%%%%%%%%%%
\subsection{The volumetric part $f(z)$}
Theorem \ref{theorem:mainVolisLog2} yields:
\begin{lemma}\label{lemma:necessary}
	Let $W\col\GLp(2)\to\R$  be a rank-one convex isotropic energy function with an additive volumetric-isochoric split $W(F)=h\bigl(\frac{\lambda_1}{\lambda_2}\bigr)+f(\lambda_1\lambda_2)$. Then $h$ is convex on $(0,\infty)$ or $f$ is convex on $(0,\infty)$.
\end{lemma}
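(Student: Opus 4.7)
The plan is to reduce everything to the separate convexity condition \mbox{(1)} of Theorem \ref{theorem:mainVolisLog2}, namely $h_0 + f_0 \geq 0$, and to run a case distinction on the sign of $f_0 = \inf_{z>0} z^2 f''(z)$. If $f_0 \geq 0$, then $f''(z) \geq 0$ for all $z>0$, so $f$ is convex on $(0,\infty)$ and there is nothing left to show. The only real work lies in the complementary case $f_0<0$, where I want to conclude that $h$ is convex on $(0,\infty)$.

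In that case, separate convexity forces $h_0 \geq -f_0 > 0$, which immediately yields $\hhat''(t) \geq h_0/t^2 > 0$ on $(1,\infty)$, so $\hhat$ is strictly convex there. To transfer this to the full interval $(0,\infty)$ I would use the symmetry $h(t) = h(1/t)$: differentiating twice gives, for $t \in (0,1)$ and $s=1/t > 1$,
\[
   h''(t) \;=\; \frac{1}{t^4}\,\hhat''(1/t) \;+\; \frac{2}{t^3}\,\hhat'(1/t) \;=\; s^2\bigl(s^2\hhat''(s) + 2s\,\hhat'(s)\bigr).
\]
The first summand is positive by the estimate on $(1,\infty)$ above, and the second is nonnegative by the Baker-Ericksen inequality (condition (2) of Theorem \ref{theorem:mainVolisLog2}, which reads $\hhat'(s) \geq 0$ on $(1,\infty)$). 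Hence $h''(t) > 0$ on $(0,1)$ as well.

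What remains is to glue the two halves across the junction $t=1$. The symmetry $h(t)=h(1/t)$ together with the chain rule gives one-sided derivatives $h'(1^+) = \hhat'(1)$ and $h'(1^-) = -\hhat'(1)$, and Baker-Ericksen guarantees $\hhat'(1) \geq 0$, so $h'$ is non-decreasing across $t=1$. Combined with $h'' > 0$ on either side, this yields convexity of $h$ on all of $(0,\infty)$. The only mild subtlety — and the step I would be most careful with — is precisely this junction: $h$ is in general not of class $C^2$ at $t=1$ (the one-sided second derivatives differ by $2\hhat'(1)$), so convexity there must be phrased via monotonicity of one-sided first derivatives rather than pointwise positivity of $h''$.
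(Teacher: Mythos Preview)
Your proof is correct and follows essentially the same route as the paper's: both use the separate convexity condition $h_0+f_0\geq 0$ from Theorem~\ref{theorem:mainVolisLog2} together with the symmetry $h(t)=h(1/t)$ and the Baker--Ericksen inequality $\hhat'\geq 0$ to propagate convexity from $(1,\infty)$ to $(0,1)$. Your treatment of the junction $t=1$ via one-sided first derivatives is in fact more explicit than the paper's, which simply records $h''\geq 0$ on $(0,1)\cup(1,\infty)$ and leaves the gluing implicit.
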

\begin{proof}
	The separate convexity condition 1) in Theorem \ref{theorem:mainVolisLog2} implies
	\begin{align}
		h_0+f_0\geq 0\qquad\implies\qquad &h_0\geq 0\quad \text{or}\quad f_0\geq 0\\
		\iff\qquad &t^2\.h''(t)\geq 0\quad\text{for all }t\in(1,\infty)\quad \text{or}\quad z^2\.f''(z)\geq 0\quad\text{for all }z\in(0,\infty)\notag\\
		\iff\qquad &h''(t)\geq 0\quad\text{for all }t\in(1,\infty)\quad \text{or}\quad f''(z)\geq 0\quad\text{for all }z\in(0,\infty)\,.\notag
	\end{align}
	The invariance property $h(t)=h\big(\frac{1}{t}\big)$ of the isochoric part $h$ yields
	\begin{align}
		h'(t)&=\ddt h(t)=\ddt h\left(\frac{1}{t}\right)=-\frac{1}{t^2}\.h'\left(\frac{1}{t}\right),\label{eq:symmetryofh1}\\
		h''(t)&=\ddt h'(t)=\ddt\left[-\frac{1}{t^2}\.h'\left(\frac{1}{t}\right)\right]=\frac{2}{t^3}\.h'\left(\frac{1}{t}\right)+\frac{1}{t^4}\.h''\left(\frac{1}{t}\right).\label{eq:symmetryofh}
	\end{align}
	Thus for arbitrary $s\in(0,1)$ the monotonicity condition 2) in Theorem \ref{theorem:mainVolisLog2} implies $h''(s)\geq0$ as well if $h''(t)\geq 0$ for all $t\in(1,\infty)$.
\end{proof}

Lemma \ref{lemma:necessary} states that rank-one convexity of an energy with a volumetric-isochoric split always implies the global convexity of $h$ \textbf{or} $f$. Thus if neither $h$ or $f$ are convex, the sum $W(F)=h\bigl(\frac{\lambda_1}{\lambda_2}\bigr)+f(\lambda_1\lambda_2)$ is not rank-one convex. On the other hand, if $h$ and $f$ are both convex, the energy is already polyconvex as the sum of two individual polyconvex energies (cf.\ Lemma \ref{lemma:volumetricConvexity} and Lemma \ref{lemma:convexityCharacterization}). Therefore, for our purpose it is sufficient to consider the two subclasses
\begin{align}
	\Mp&\colonequals\left\{W(F)=h\biggl(\frac{\lambda_1}{\lambda_2}\biggr)+f(\lambda_1\lambda_2)\;|\;h\col(1,\infty)\to\R \text{ is convex}\right\}\\
	\intertext{and}
	\Mm&\colonequals\left\{W(F)=h\biggl(\frac{\lambda_1}{\lambda_2}\biggr)+f(\lambda_1\lambda_2)\;|\;f\col\Rp\to\R \text{ is convex}\right\}
\end{align}
where either $f$ or $h$ are convex. The conditions in Theorem \ref{theorem:mainVolisLog2} contain several different expressions related to the isochoric part $h(t)$, while $f_0=\inf_{z\in(0,\infty)}z^2f''(z)$ occurs exclusively for the volumetric term. We utilize this observation to reduce the volumetric parts of the sets $\Mp$ and $\Mm$ to a single function for all least rank-one convex representatives:
\begin{lemma}\label{lem:volisLog}
	Let $W\col\GLp(2)\to\R$ be a two-times differentiable isotropic energy function with a volumetric-isochoric split
	\[
		W(F)=h\biggl(\frac{\lambda_1}{\lambda_2}\biggr)+f(\lambda_1\lambda_2)
	\]
	that is rank-one convex but not quasiconvex. Then there exists a constant $c\in\R$ so that the energy
	\begin{equation}
		W_0(F)\colonequals h\biggl(\frac{\lambda_1}{\lambda_2}\biggr)+c\.\log(\lambda_1\lambda_2)
	\end{equation}
	is rank-one convex but not quasiconvex, as well.
\end{lemma}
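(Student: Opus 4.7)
The plan is to exploit the fact that, in Theorem \ref{theorem:mainVolisLog2}, the volumetric part $f$ enters the rank-one convexity conditions only through the single scalar
\[
	f_0 = \inf_{z > 0} z^2\.f''(z)\,.
\]
For the logarithmic ansatz $\tilde f(z) = c\.\log z$ one has $z^2\.\tilde f''(z) = -c$ identically, so its associated constant is $\tilde f_0 = -c$. I therefore propose the choice $c \colonequals -f_0$, so that the candidate
\[
	W_0(F) = h\biggl(\frac{\lambda_1}{\lambda_2}\biggr) - f_0\.\log(\lambda_1\lambda_2)
\]
carries exactly the same isochoric part $h$ and exactly the same $f_0$-value as the original $W$.

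Before using this, I would check that $f_0$ is finite: by Lemma \ref{lemma:necessary} the energy $W$ lies in $\Mp\cup\Mm$. In $\Mm$ one has $f_0\geq 0$ directly; in $\Mp$ one has $h_0\geq 0$, and condition 1) of Theorem \ref{theorem:mainVolisLog2} yields $f_0\geq -h_0 > -\infty$. With $c=-f_0$ the conditions 1)--4) of Theorem \ref{theorem:mainVolisLog2} for $W_0$ coincide \emph{verbatim} with those for $W$ (since $h$ is unchanged and the volumetric term contributes only through $f_0$), so $W_0$ inherits rank-one convexity from $W$.

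For the failure of quasiconvexity I would use the decomposition
\[
	W(F) = W_0(F) + g(\det F)\,,\qquad g(z)\colonequals f(z) + f_0\.\log z\,.
\]
By the very definition of $f_0$ one has $z^2\.g''(z) = z^2\.f''(z) - f_0 \geq 0$ for all $z\in\Rp$, so $g$ is convex on $\Rp$. Consequently $F\mapsto g(\det F)$ is polyconvex and in particular quasiconvex. If $W_0$ were quasiconvex, then $W=W_0+g(\det F)$ would also be quasiconvex as the sum of two quasiconvex functions, contradicting the hypothesis. Hence $W_0$ is rank-one convex but not quasiconvex, as required.

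The only nontrivial point is the initial observation that the Knowles--Sternberg criterion, in the volumetric-isochoric form of Theorem \ref{theorem:mainVolisLog2}, collapses the entire volumetric contribution into the single number $f_0$; once that is read off, the rest is the bookkeeping above together with the standard fact that convex functions of $\det F$ are polyconvex.
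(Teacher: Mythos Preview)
Your proof is correct and follows essentially the same route as the paper: you choose $c=-f_0$, observe that the rank-one convexity conditions of Theorem \ref{theorem:mainVolisLog2} depend on $f$ only through $f_0$ and hence transfer verbatim to $W_0$, and then argue that the difference $W-W_0=g(\det F)$ with $g$ convex is polyconvex (the paper's Lemma \ref{lemma:volumetricConvexity}), so non-quasiconvexity of $W$ forces non-quasiconvexity of $W_0$. The only cosmetic difference is that the paper justifies $f_0\in\R$ in a footnote directly from $h_0+f_0\geq 0$ and $h\in C^2$, without invoking Lemma \ref{lemma:necessary}.
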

\begin{proof}
	Both energies have the same isochoric part, but different volumetric parts. We set\footnote{We have $\displaystyle f_0=\inf_{z\in(0,\infty)}z^2f''(z)\in\R$ because of the assumed rank-one convexity of $W\col\GLp(2)\to\R$, i.e.\ $f_0+h_0\geq0$.}
	\begin{equation}
		c\colonequals-f_0=-\inf_{z\in(0,\infty)}z^2f''(z)\,,\qquad\ftilde(z)\colonequals c\.\log(z)\,.
	\end{equation}
	Then
	\[
		\ftilde_0=\inf_{z\in(0,\infty)}z^2\ftilde''(z)=\inf_{z\in(0,\infty)}z^2\left(-\frac{c}{z^2}\right)=-c=f_0\,.
	\]
	Thus the conditions in Theorem \ref{theorem:mainVolisLog2} are identical for both energies and they share the same rank-one convexity behavior. Therefore, $W_0(F)$ is rank-one convex. Regarding the quasiconvexity, we consider the difference between the two energies
	\begin{equation}
		W^*(F)\colonequals W(F)-W_0(F)=f(\lambda_1\lambda_2)-c\.\log(\lambda_1\lambda_2)\equalscolon f^*(\lambda_1\lambda_2)\,.\label{eq:W0andWstar}
	\end{equation}
	The energy $W^*(F)$ is a purely volumetric energy with $f_0^*=f_0-\ftilde_0=0$. Thus ${f^*}''(z)\geq 0$ for all $z\in(0,\infty)$, i.e.\ $f^*$ is convex and in particular $W^*(F)$ is quasiconvex (cf.\ Lemma \ref{lemma:volumetricConvexity}). If $W_0(F)$ is quasiconvex, then the sum $W(F)=W_0(F)+W^*(F)$ of two individual quasiconvex functions is quasiconvex as well, which contradicts the initial assumption that $W(F)$ is not quasiconvex.
\end{proof}
Equation \eqref{eq:W0andWstar} shows that every rank-one convex function $W(F)$ with a volumetric-isochoric split can be rewritten as the sum of two individual rank-one convex energies
\begin{equation}
	W(F)=h\biggl(\frac{\lambda_1}{\lambda_2}\biggr)+f(\lambda_1\lambda_2)=h\biggl(\frac{\lambda_1}{\lambda_2}\biggr)+c\.\log(\lambda_1\lambda_2)+f^*(\lambda_1\lambda_2)=W_0(F)+W^*(F)\,.
\end{equation}
The second part $W^*(F)=f^*(\det F)$ is a purely volumetric energy, i.e.\ rank-one convexity already implies quasiconvexity, while the first part $W_0(F)$ maintains the isochoric part $h(t)$. In other words,  Lemma \ref{lem:volisLog} states that \textbf{if} there exists a volumetric-isochorically split energy $W(F)$ that is rank-one convex but not quasiconvex, \textbf{then} there exists a function $W_0(F)=h\bigl(\frac{\lambda_1}{\lambda_2}\bigr)+c\.\log(\lambda_1\lambda_2)$ that is rank-one convex but not quasiconvex as well.

Since multiplying with an arbitrary constant $\alpha\in\Rp$ has no effect on the convexity behavior of the energy function, we can also set $c=\pm1$ without loss of generality. We reduce the two subclasses $\Mp$ and $\Mm$ of volumetric-isochorically split energies to two new sets
\begin{align}
	\Mps&\colonequals\left\{W(F)=h\biggl(\frac{\lambda_1}{\lambda_2}\biggr)+\log(\lambda_1\lambda_2)\;|\;h\col(1,\infty)\to\R \text{ is convex}\right\}\\
	\intertext{and}
	\Mms&\colonequals\left\{W(F)=h\biggl(\frac{\lambda_1}{\lambda_2}\biggr)-\log(\lambda_1\lambda_2)\;|\;h\col(1,\infty)\to\R \text{ is not convex}\right\}.
\end{align}
%
%
%
%
%
%%%%%%%%%%%%%%%%%%%%%%%%%%%%%%%%%%%%%%%%%%%%%%%%%%%%%%%%%%%%%%%%%
\section{Subclass $\Mp$: Convex isochoric part}\label{sec:Mp}
We focus on volumetric-isochorically split energies which have a convex isochoric part $h(t)$ together with a non-convex volumetric part $f(\det F)=\log\det F$, i.e.\ $W\in\Mps$. It is possible to reduce the whole class $\Mps$ and thereby $\Mp$, to a single rank-one convex representative $\Wmp(F)$ (cf.\ Lemma \ref{lem:minimallyConvexEnergy}) as the only candidate to check for quasiconvexity to answer Morrey's questions for all energies with a volumetric-isochoric split and a convex isochoric part.

This candidate is found by searching for energy functions that satisfy inequality conditions of Theorem \ref{theorem:mainVolisLog2} by equality. We start with the first inequality which is the most important condition due to its equivalence to separate convexity: 
\begin{flalign}
	1)&&h_0+f_0\geq 0\,,\qquad h_0=\inf_{t\in(1,\infty)}t^2h''(t)\qquad\text{and}\qquad f_0=\inf_{z\in(0,\infty)}z^2f''(z)\,.&&
\end{flalign}
We consider the case of equality for all $t\geq 1$. For every $W\in\Mps$ the volumetric part is given by $f(z)=\log z$, which implies 
\[
	f_0=\inf_{z\in(0,\infty)}z^2f''(z)=\inf_{z\in(0,\infty)}z^2\.\frac{-1}{z^2}=-1\,,
\]
and we compute
\begin{equation}
		t^2h''(t)-1=0\qquad\iff\qquad h''(t)=\frac{1}{t^2}\qquad\iff\qquad h(t)=-\log t+a\.t+b\,.
\end{equation}
The constant $b\in\R$ is negligible regarding any convexity property and thus we set $b=0$. We determine the other constant $a\in\R$ by testing the remaining convexity conditions and choose the constant $a$ so that these inequalities are satisfied as non-strictly as possible to obtain our first least rank-one convex candidate. In this way, the monotonicity condition 2) of Theorem \ref{theorem:mainVolisLog2} is given by
\begin{equation}
	-\frac{1}{t}+a\geq 0\qquad\text{for all}\quad t\geq 1\qquad\iff\qquad a\geq 1\,.
\end{equation}
Next, condition 3a) of Theorem \ref{theorem:mainVolisLog2},
\begin{align}
	\frac{2\.t}{t-1}\.\frac{a\.t-1}{t}-t^2\.\frac{1}{t^2}-1\geq 0\qquad\iff\qquad\frac{2\.(a-1)\.t}{t-1}\geq 0\qquad\text{for all}\quad t\geq 1\,,
\end{align}
is satisfied for all $a\geq 1$ while condition 4a) of Theorem \ref{theorem:mainVolisLog2},
\begin{align}
	\frac{2\.t}{t+1}\.\frac{a\.t-1}{t}+t^2\.\frac{1}{t^2}+1\geq 0\qquad\iff\qquad\frac{2\.(a+1)\.t}{t+1}\geq 0\qquad\text{for all}\quad t\geq 1\,,
\end{align}
is satisfied for $a\geq -1$. The two remaining conditions 3b) and 4b) of Theorem \ref{theorem:mainVolisLog2} are satisfied for $a\geq 1$ and $a\geq3-2\.\sqrt{2}$, respectively. Putting all conditions together, we conclude that the case $a=1$ should be considered optimal and results in a least rank-one convex candidate. Overall, our canonical candidate is $h(t)=t-\log t$, which corresponds to the energy function

\noindent\fbox{\parbox{0.99\textwidth}{
	\setlength{\abovedisplayskip}{0pt}
	\setlength{\belowdisplayskip}{0pt}
	\begin{equation}
		\Wmp(F)\colonequals\frac{\lambdamax}{\lambdamin}-\log\frac{\lambdamax}{\lambdamin}+\log(\det F)=\frac{\lambdamax}{\lambdamin}+2\.\log\lmin\,.\vspace{-0.15cm}
	\end{equation}
}}

The energy $\Wmp(F)$ satisfies the conditions 1) and 3a) by equality for all $t\geq 1$, conditions 2) and 3b) for $t=1$. We show that this function is the single representative for the energy class $\Mp$ regarding the question of quasiconvexity.

\begin{lemma}\label{lem:minimallyConvexEnergy}
	The following are equivalent:
	\begin{itemize}
	\item[i)]
		There exists a two-times differentiable isotropic energy $W\col\GLp(2)\to\R$ with a volumetric-isochoric split
		\begin{equation}\label{eq:splitEnergy}
			W(F) = h\biggl(\frac{\lambda_1}{\lambda_2}\biggr)+f(\lambda_1\lambda_2)
		\end{equation}
		such that $h$ is convex, i.e.\ $W\in\Mp$, and $W$ is rank-one convex but $W$ is not quasiconvex.
	\item[ii)]
		The rank-one convex function $\Wmp(F)\col\GLp(2)\to\R$ with
		\begin{equation}\label{eq:definitionWzero}
			\Wmp(F)\colonequals \frac{\lambdamax}{\lambdamin} - \log\left(\frac{\lambdamax}{\lambdamin}\right) + \log\det F\vspace{-0.15cm}
		\end{equation}
		with singular values $\lambdamax\geq\lambdamin$ of $F$ is not quasiconvex.
	\end{itemize}
\end{lemma}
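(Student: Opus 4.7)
The reverse implication (ii) $\Rightarrow$ (i) is immediate: take $W=\Wmp$ itself, which lies in $\Mp$ (its isochoric part $h^*(t)=t-\log t$ satisfies $(h^*)''(t)=1/t^2>0$ on $(1,\infty)$) and is rank-one convex by the verification immediately preceding the lemma statement. My plan for the nontrivial direction (i) $\Rightarrow$ (ii) is first to normalize an alleged counterexample into the canonical form of $\Mps$, and then to express the difference between this canonical counterexample and $\Wmp$ as a purely isochoric polyconvex term, so that a hypothetical quasiconvexity of $\Wmp$ would propagate to the counterexample and yield a contradiction.

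For the first (reduction) step, starting from $W\in\Mp$ rank-one convex but not quasiconvex, Lemma \ref{lem:volisLog} produces $W_0(F)=h(\lambda_1/\lambda_2)+c\log(\lambda_1\lambda_2)$ with $c=-f_0$ and the same (convex) isochoric $h$, still rank-one convex but not quasiconvex. If $c\leq 0$, then $(c\log z)''=-c/z^2\geq 0$, so the volumetric part is polyconvex by Lemma \ref{lemma:volumetricConvexity}; combined with polyconvexity of the isochoric part from Lemma \ref{lemma:convexityCharacterization}, this makes $W_0$ polyconvex, contradicting non-quasiconvexity. Hence $c>0$, and scaling by $1/c$ (which preserves every convexity notion) puts $W_0\in\Mps$. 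We may therefore assume $W_0(F)=h(\lambda_1/\lambda_2)+\log(\lambda_1\lambda_2)$ with $h$ convex.

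For the second (decomposition) step, the symmetry $h(t)=h(1/t)$ lets us rewrite $h(\lambda_1/\lambda_2)=h(\lmax/\lmin)$, so that
\[
	W_0(F) = \Wmp(F) + R(F), \qquad R(F) := r\!\left(\frac{\lmax}{\lmin}\right), \qquad r(t) := h(t)-(t-\log t).
\]
The residual $R$ is purely isochoric; by Lemma \ref{lemma:convexityCharacterization} it is polyconvex as soon as $r$ is convex and non-decreasing on $[1,\infty)$. Convexity follows from separate-convexity condition 1) of Theorem \ref{theorem:mainVolisLog2}, which applied to $W_0$ with $f_0=-1$ yields $t^2 h''(t)\geq 1$, so $r''(t)=h''(t)-1/t^2\geq 0$ on $(1,\infty)$. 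Monotonicity follows because differentiating the symmetry $h(t)=h(1/t)$ at $t=1$ forces $h'(1)=0$, hence $r'(1)=h'(1)-1+1=0$, and convexity of $r$ then gives $r'\geq 0$ on $[1,\infty)$. Thus $R$ is polyconvex, hence quasiconvex. Were $\Wmp$ also quasiconvex, $W_0=\Wmp+R$ would be the sum of two quasiconvex functions (with coherent $+\infty$ extensions outside $\GLp(2)$), hence quasiconvex, contradicting the construction of $W_0$. The only place calling for a little care is the identity $h'(1)=0$ anchoring the monotonicity check; it comes directly from the isochoric symmetry, while the rest is arithmetic on top of Lemmas \ref{lem:volisLog} and \ref{lemma:convexityCharacterization} and condition 1) of Theorem \ref{theorem:mainVolisLog2}.
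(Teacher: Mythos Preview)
Your proof is correct and takes essentially the same approach as the paper: reduce to $\Mps$ via Lemma~\ref{lem:volisLog}, write $W_0-\Wmp$ as a purely isochoric residual, and verify quasiconvexity of that residual via Lemma~\ref{lemma:convexityCharacterization} by checking convexity and monotonicity of $r$ on $[1,\infty)$. The only differences are cosmetic: the paper obtains $\htilde'(1)\geq 0$ from the Baker--Ericksen condition~2) of Theorem~\ref{theorem:mainVolisLog2} rather than from the isochoric symmetry $h(t)=h(1/t)$ as you do, and the paper simply asserts $c>0$ where you argue it explicitly from polyconvexity of the $c\leq 0$ case.
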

\begin{proof}
	Using Theorem \ref{theorem:mainVolisLog2}, we can directly prove the rank-one convexity of $\Wmp$. We calculate
	\[
		h_0=\inf_{t\in(1,\infty)}t^2\.h''(t)=\inf_{t\in(1,\infty)}t^2\frac{1}{t^2}=1\,,\qquad f_0=\inf_{z\in(0,\infty)}z^2\.f''(z)=\inf_{z\in(0,\infty)}z^2\left(-\frac{1}{z^2}\right)=-1\,,
	\]
	\begin{itemize}
		\item[1)] \qquad$\displaystyle h_0+f_0=1-1\geq0\,,$\hfill\checkmark
		\item[2)] \qquad$\displaystyle h'(t)=1-\frac{1}{t}\geq 0$\quad for all $t\geq1\,,$\hfill\checkmark
		\item[3a)] \qquad$\displaystyle \frac{2\.t}{t-1}\.h'(t)-t^2\.h''(t)+f_0=\frac{2\.t}{t-1}\.\frac{t-1}{t}-1-1=0\geq 0\,,$\hfill\checkmark
		\item[4a)] \qquad$\displaystyle \frac{2\.t}{t+1}\.h'(t)+t^2\.h''(t)-f_0=\frac{2\.t}{t+1}\.\frac{t-1}{t}+1+1=2\left(\frac{t-1}{t+1}+1\right)=\frac{4\.t}{t+1}\geq 0$\hfill\checkmark
	\end{itemize}
	 By definition $\Wmp\in\Mp$. i.e.\ ii) $\implies$ i). Starting with $W(F)$ from i), we use Lemma \ref{lem:volisLog} to reduce the class of energies from $\Mp$ to $\Mps$. Thus there exists a constant $c>0$ so that the energy 
	\[
		W_0(F)=h\biggl(\frac{\lambda_1}{\lambda_2}\biggr)+c\.\log(\lambda_1\lambda_2)
	\]
	is rank-one convex but not quasiconvex. Without loss of generality we set $c=1$, because multiplying with an arbitrary positive constant has no effect on the convexity behavior of the energy, and change $h$ accordingly. We consider the difference
	\begin{equation}
		W^*(F)\colonequals W_0(F)-\Wmp(F)= h(t)-t+\log t\equalscolon\htilde(t)\,,\qquad t=\frac{\lmax}{\lmin}\,,
	\end{equation}
	which is a purely isochoric energy. We show that $W^*(F)$ itself is quasiconvex. For this, we study the first two conditions of Theorem \ref{theorem:mainVolisLog2} for the rank-one convex energy
	\[
		W_0(F)=W^*(F)+\Wmp(F)=\htilde(t)+t-\log t+\log\det F\,.
	\]
	We compute $\displaystyle f_0=\inf_{z\in(0,\infty)}z^2\.\frac{-1}{z^2}=-1$ and
	\begin{itemize}
		\item[i)] $\displaystyle h_0+f_0=\inf_{t\in(1,\infty)}t^2\left(\htilde''(t)+\frac{1}{t^2}\right)-1=\htilde_0\geq 0\qquad\implies\qquad\htilde''(t)\geq 0$\quad for all $t\in(1,\infty)$.
		\item[ii)] $h'(t)\geq 0$ for all $t\geq 1$\quad and \quad$\htilde'(t)=h'(t)+1-\frac{1}{t}$\\
		for $t=1$ it holds \quad$\htilde'(1)=h'(1)+1-1=h'(1)\geq 0$.
	\end{itemize}
	Thus $\htilde(t)$ is convex for all $t\geq 1$ and non-decreasing at $t=1$. Both together imply $\htilde'(t)\geq\htilde'(1)\geq 0$ for all $t\geq 1$, i.e.\ the mapping $t\mapsto\htilde(t)$ is non-decreasing on $[1,\infty)$. By Lemma \ref{lemma:convexityCharacterization}, the isochoric energy $W^*(F)=\htilde\bigl(\frac{\lambdamax}{\lambdamin}\bigr)$ is quasiconvex. The sum of two individual quasiconvex functions is again quasiconvex; hence the non-quasiconvexity of $W_0(F)=W^*(F)+\Wmp(F)$ requires that $\Wmp(F)$ is already non-quasiconvex.
\end{proof}
\begin{remark}
	Lemma \ref{lem:minimallyConvexEnergy} can also be stated as the equivalence between the quasiconvexity of $\Wmp$ and the implication
	\[
		W \text{ rank-one convex} \quad\implies\quad W \text{ quasiconvex}\qquad\qquad\text{for all}\quad W\in\Mp\,,
	\]
	i.e.\ for all energies with an additive volumetric-isochoric split and a convex isochoric part.
\end{remark}
Due to the use of the ordered singular values $\lambdamax\geq\lambdamin$ instead of the unsorted singular values $\lambda_1,\lambda_2$ of $F$, the representation $h\col\Rp\to\R$ of the isochoric part is not smooth at $t=1$ in general. Nevertheless, for the candidate $\Wmp$ we have
\begin{equation}
	h(t)=\begin{cases}t-\log t &\caseif t\geq 1\,,\\ \frac{1}{t}+\log t &\caseif t\leq 1\,,\end{cases}
\end{equation}
which implies $h\in C^2(\Rp;\R)$ because
\begin{align}
	\lim_{t\searrow 1}h'(t)&=\lim_{t\to 1}1-\frac{1}{t}=0=\lim_{t\to 1}-\frac{1}{t^2}+\frac{1}{t}=\lim_{t\nearrow 1}h'(t)\,,\notag\\
	\lim_{t\searrow 1}h''(t)&=\lim_{t\to 1}\frac{1}{t^2}=1=\lim_{t\to 1}\frac{2}{t^3}-\frac{1}{t^2}=\lim_{t\nearrow 1}h''(t)\,.\notag
\end{align}
Although Lemma \ref{lem:minimallyConvexEnergy} reduces the question of whether $\Mp$ contains a rank-one convex, non-quasiconvex function to the very particular question of whether $\Wmp(F)$ is quasiconvex, the latter still remains open at this point (cf.\ Section \ref{sec:Burkholder}). We note, however, that the stronger condition of polyconvexity is indeed not satisfied for $\Wmp(F)$
\begin{lemma}\label{lem:WmpNotPolyconvex}
	The energy $\Wmp(F)=\frac{\lambdamax}{\lambdamin} - \log\bigl(\frac{\lambdamax}{\lambdamin}\bigr) + \log\det F$ is not polyconvex.
\end{lemma}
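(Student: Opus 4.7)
The plan is to exhibit a bounded ray in $\GLp(2)$ along which $\Wmp$ is unbounded below, which contradicts the elementary convex-analysis fact that every proper convex function on a finite-dimensional space admits a global affine lower bound. First, I would evaluate $\Wmp$ on the isotropic scaling ray $F=t\.\id$ for $t\in(0,1]$. Since $\lmax=\lmin=t$, the isochoric contribution $\frac{\lmax}{\lmin}-\log\frac{\lmax}{\lmin}$ collapses to $1$, leaving
\[
	\Wmp(t\.\id)=1+\log\det(t\.\id)=1+2\.\log t\,,
\]
which tends to $-\infty$ as $t\to 0^+$, even though $t\.\id$ remains in a bounded subset of $\R^{2\times 2}$.

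Next, I would suppose, for the sake of contradiction, that $\Wmp$ is polyconvex in the sense of Definition~\ref{def:polyconvexity}, where, as usual, the function is extended by $+\infty$ to $\R^{2\times 2}\setminus\GLp(2)$. Then there exists a convex function $P\col\R^{2\times 2}\times\R\to\R\cup\{+\infty\}$ with $\Wmp(F)=P(F,\det F)$ for every $F\in\GLp(2)$, and $P$ is proper because $P(\id,1)=\Wmp(\id)=1<\infty$. A standard fact from convex analysis (e.g.\ applying Fenchel--Moreau to the lower semicontinuous hull of $P$ and using that this hull admits a subgradient at any relative-interior point of its effective domain) then produces constants $a\in\R$, $L\in\R^{2\times 2}$ and $c\in\R$ such that
\[
	P(F,d)\geq a+\langle L,F\rangle+c\.d\qquad\text{for all }(F,d)\in\R^{2\times 2}\times\R\,.
\]

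To conclude, I would specialize this minorant to $(F,d)=(t\.\id,t^2)$ and combine with the computation of the first step: the resulting inequality $1+2\.\log t\geq a+t\.\tr L+c\.t^2$ must hold for every $t\in(0,1]$, but the right-hand side stays bounded while the left-hand side diverges to $-\infty$ as $t\to 0^+$. This contradiction forces $\Wmp$ not to be polyconvex. I do not anticipate any serious obstacle: the heart of the argument is the one-line scaling computation, and the only mild technicality is the affine-minorant statement for possibly non-lsc convex functions, which is disposed of by passing to the lower semicontinuous hull.
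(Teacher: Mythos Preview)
Your argument is correct and is essentially the same as the paper's: both evaluate $\Wmp$ along the isotropic ray $t\.\id$, observe that $\Wmp(t\.\id)=1+2\log t\to-\infty$ on a bounded set, and derive a contradiction with the affine lower bound guaranteed by any convex representation $P(F,\det F)$. The paper phrases this as ``$P$ lies above its supporting hyperplane at $(\id,1)$'' and cites \cite[Cor.~5.9]{Dacorogna08}, whereas you spell out the affine minorant via Fenchel--Moreau, but the substance is identical.
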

\begin{proof}
	Every polyconvex function $W\col\GLp(2)\to\R$ must grow at least linear for $\det F\to0$ \cite[Cor.~5.9]{Dacorogna08}. This can be motivated by the following: If $W(F)$ is polyconvex, there exists a (not necessarily unique) representation $P\col\R^{2\times2}\times\Rp\to\R$ with $W(F)=P(F,\det F)$ which is convex. We consider the hyperplane attached to $P$ at $(\id,1)$. Convexity in $\R^{2\times2}\times\Rp$ implies that $P$ is bounded below from its hyperplane. On the contrary,
	\[
		\lim_{\lambda\to 0}P(\lambda\.\id,\lambda^2)=\lim_{\lambda\to 0}\Wmp(\lambda\.\id)=\lim_{\lambda\to 0}\frac{\lambda}{\lambda}+2\.\log\lambda=-\infty
	\]
	is unbounded. Thus $\Wmp(F)$ cannot be polyconvex.
\end{proof}
To summarize, the question whether an energy $W\in\Mp$ exists which is rank-one convex but not quasiconvex or not, i.e.\ rank-one convexity always implies quasiconvexity, can be reduced to the specific question of quasiconvexity of the single energy function $\Wmp(F)$. In this sense, our candidate represents an extreme point of $\Mp$, i.e.\ it is the single least rank-one convex function of this subclass. Moreover, our energy $\Wmp(F)$ is non-polyconvex, which is necessary to serve as a possible example for Morrey's conjecture.
%
%
%
%
%
%%%%%%%%%%%%%%%%%%%%%%%%%%%%%%%%%%%%%%%%%%%%%%%%%%%%%%%%%%%%
\section{Subclass $\Mm$: Convex volumetric part}\label{sec:Mm}

We continue with volumetric-isochorically split energies which have a non-convex isochoric part $h(t)$ together with a convex volumetric part $f(\det F)=-\log\det F$, i.e.\ $W\in\Mms$. This time, the question of whether an energy $W\in\Mm$ exists which is rank-one convex but not quasiconvex is much more involved and cannot be reduced to the matter of quasiconvexity of a single extreme representative. Nevertheless, we give a few candidates which are analogous to \mbox{$\Wmp\in\Mp$.}

Again, following the concept of least rank-one convex functions, we are looking for energies that cannot be split additively into two non-trivial functions which are both individually rank-one convex. For this, we take the four inequalities of Theorem \ref{theorem:mainVolisLog2} characterizing rank-one convexity and consider the corresponding differential equations, i.e.\ energies which only fulfill these inequalities conditions by equality. However, a function does not have to satisfy all four conditions non-strictly to be a least rank-one convex candidate; recall that the energy $\Wmp(F)$ as the single representative of $\Mp$ satisfies conditions 1) and 3a) by equality for all $t\geq 1$ and condition 2) for $t=1$ but is not optimal for the remaining inequalities.

As a method to find candidates for least rank-one convex functions of $\Mm$, we examine each condition characterizing rank-one convexity separately and search for equality for all $t\geq 1$. Thus we are capable of finding very interesting energies, e.g.\ a function which is nowhere strictly separately convex or satisfies another inequality condition by equality. 
However, due to their dependence on $t\geq 1$ this approach does not cover all possible extreme cases of $\Mm$. An energy can already be a least rank-one convex candidate if it satisfies one or more inequality conditions non-strictly for a single $t\geq 1$ and not all $t\geq 1$ as described above (cf.\ Section \ref{sec:Ws}). Besides, conditions 3) and 4) allow for even more involved combinations because of their internal case-by-case distinction.
%
%
%
%%%%%%%%%%%%%%%%%%%%%%%%%%%%%%%%%%
\subsection{Obtaining $\Wmm$}\label{sec:Wmm}
We start with the first inequality of Theorem \ref{theorem:mainVolisLog2}, which is the most important condition due to its equivalence to separate convexity: 
\begin{flalign}
	1)&&h_0+f_0\geq 0\,,\qquad h_0=\inf_{t\in(1,\infty)}t^2h''(t)\qquad\text{and}\qquad f_0=\inf_{z\in(0,\infty)}z^2f''(z)\,.&&
\end{flalign}
We consider the case of equality for all $t\geq 1$. For every $W\in\Mms$ the volumetric part is given by $f(z)=-\log z$, which implies 
\[
	f_0=\inf_{z\in(0,\infty)}z^2f''(z)=\inf_{z\in(0,\infty)}z^2\.\frac{1}{z^2}=1\,,
\]
and we compute
\begin{equation}
		t^2h''(t)+1=0\qquad\iff\qquad h''(t)=-\frac{1}{t^2}\qquad\iff\qquad h(t)=\log t+a\.t+b\,.
\end{equation}
The constant $b\in\R$ is negligible regarding any convexity property and thus we set $b=0$. We determine the other constant $a\in\R$ by testing the remaining convexity conditions and choose the constant $a$ so that these inequalities are satisfied as non-strictly as possible to obtain our first least rank-one convex candidate. In this way, the monotonicity condition 2) of Theorem \ref{theorem:mainVolisLog2} is given by
\begin{equation}
	\frac{1}{t}+a\geq 0\qquad\text{for all}\quad t\geq 1\qquad\iff\qquad a\geq 0\,.
\end{equation}
Next, condition 3a) of Theorem \ref{theorem:mainVolisLog2},
\begin{align}
	\frac{2\.t}{t-1}\.\frac{1+a\.t}{t}+t^2\.\frac{1}{t^2}+1\geq 0\qquad\iff\qquad\frac{2\.(a+1)\.t}{t-1}\geq 0\qquad\text{for all}\quad t\geq 1\,,
\end{align}
is satisfied for all $a\geq -1$ while condition 4a) of Theorem \ref{theorem:mainVolisLog2},
\begin{align}
	\frac{2\.t}{t+1}\.\frac{1+a\.t}{t}-t^2\.\frac{1}{t^2}-1\geq 0\qquad\iff\qquad\frac{2\.(a-1)\.t}{t+1}\geq 0\qquad\text{for all}\quad t\geq 1\,,
\end{align}
is satisfied for $a\geq 1$. The two remaining conditions 3b) and 4b) of Theorem \ref{theorem:mainVolisLog2} are never satisfied for $a\neq 1$. Putting all conditions together, we conclude that the case $a=1$ yields a least rank-one convex function. Overall, our canonical candidate regarding equality in condition 1) of Theorem \ref{theorem:mainVolisLog2} for all $t\geq1$ is $h(t)=t+\log t$, which corresponds to the energy function

\fbox{\parbox{0.99\textwidth}{
	\setlength{\abovedisplayskip}{0pt}
	\setlength{\belowdisplayskip}{0pt}
	\begin{equation}
		\Wmm(F)\colonequals\frac{\lambdamax}{\lambdamin}+\log\frac{\lambdamax}{\lambdamin}-\log(\det F)\,,\vspace{-0.15cm}
	\end{equation}
}}

an analogous candidate to $\Wmp\in\Mp$. The energy $\Wmm(F)$ satisfies condition 1), 4a) and 4b) non-strict for all $t\geq 1$, conditions 2) in the limit $t\to\infty$ but is not optimal for the remaining inequalities. Contrary to the discussion in the last section, where we showed that $\Wmp$ is as the single least rank-one convex candidate for the energy class $\Mp$, the same is not true for $\Wmm(F)$, i.e.\ it exists several distinct least rank-one convex energies for the class $\Mm$. 
%
%
%
%%%%%%%%%%%%%%%%%%%%%%%%%%%%%%%%%%
\subsection{Obtaining $\Ws$}\label{sec:Ws}
We continue with the second condition of Theorem \ref{theorem:mainVolisLog2}, which is equivalent to the Baker-Ericksen inequalities, and consider the limit case
\begin{flalign}
	2)&&h'(t)=0\qquad\text{for all}\quad t\geq 1\,.&&
\end{flalign}
The only solution to this differential equation is the constant function $h(t)=c$ which provides the purely volumetric energy $W(F)=c-\log\det F$. Thus we have no (non-trivial) least rank-one convex candidate corresponding to the monotonicity condition 2). 

Before we continue to work with the more involved conditions 3) and 4), we will consider an alternative variant to produce more least rank-one convex energies utilizing the fact that an energy function can be least rank-one convex without satisfying one single condition non-strict for all $t\in(1,\infty)$. Starting with $\Wmm(F)$, we can weaken the monotonicity condition 2) by waiving the requirement of being nowhere strictly separately convex. However, we have to ensure that the energy remains non-strictly separately convex for one $t\in[1,\infty]$. Thus we are looking for a new function $h(t)$ which is less monotone but slightly more convex then the expression $t+\log t$. We choose the ansatz
\begin{equation}
	W(F)=a\.t+\frac{b}{t}+\log t\,,\qquad a,b\in\R\label{eq:ansatzWs}
\end{equation}
and with the first two conditions of Theorem \ref{theorem:mainVolisLog2} we compute
\begin{flalign*}
	1)&&h_0+f_0&=\inf_{t\in(0,\infty)}t^2\left(\frac{2\.b}{t^3}-\frac{1}{t^2}\right)+1=\inf_{t\in(0,\infty)}\frac{4\.b}{t}\geq 0\qquad\iff\qquad b\geq0\,,&&\\
	2)&&h'(t)&=a-\frac{b}{t^2}+\frac{1}{t}\geq 0\qquad\iff\qquad a\geq0\quad\text{and}\quad b\leq a+1\,.&&
\end{flalign*}
We are interested in the limit case $b=a+1$ so that $h'(1)=0$, i.e.\ the function is not strictly monotone for $t=1$. The second limit case $a=0$ would reestablish $\Wmm(F)$. Note that the separate convexity condition is only non-strict in the limit case $t\to\infty$. We evaluate the remaining conditions and choose the remaining constant $a\geq 0$ accordingly. Then
\begin{flalign*}
	3a)&&\frac{2\.t}{t-1}\left(a-\frac{a+1}{t^2}+\frac{1}{t}\right)-t^2\left(2\.\frac{a+1}{t^3}-\frac{1}{t^2}\right)+1\geq 0\quad\iff\quad 2\.(a+1)\geq 0\quad\text{for all}\quad t\geq 1\,,&&
\end{flalign*}
which is satisfied for $a\geq -1$ which is already ensured by conditions 2). Lastly, we compute
\begin{flalign*}
	4a)&&\frac{2\.t}{t+1}\left(a-\frac{a+1}{t^2}+\frac{1}{t}\right)+t^2\left(2\.\frac{a+1}{t^3}-\frac{1}{t^2}\right)-1\geq 0\quad\iff\quad 2\.\frac{a\.t-t+a+1}{t+1}\geq 0\quad\text{for all}\quad t\geq 1&&
\end{flalign*}
which is fulfilled for $a\geq 1$. For the case $0\leq a\leq 1$ the remaining condition
\begin{flalign}
	4b)&&2\.\frac{1+t+3\.t^2-t^3+a\.(1+t+3\.t^2+3\.t^3)}{t^2}\geq 0\qquad\text{for all}\quad t\geq 1&&
\end{flalign}
is satisfied for $a\geq\frac{1}{3}$. Overall, $a=\frac{1}{3}$ yields the least rank-one convex candidate with the isochoric term $h(t)=\frac{t}{3}+\frac{4}{3\.t}+\log t$ which generates

\fbox{\parbox{0.99\textwidth}{
	\setlength{\abovedisplayskip}{0pt}
	\setlength{\belowdisplayskip}{0pt}
	\begin{equation}
		\Ws(F)\colonequals\frac{1}{3}\.\frac{\lambdamax}{\lambdamin}+\frac{4}{3}\frac{\lambdamin}{\lambdamax}+\log\frac{\lambdamax}{\lambdamin}-\log(\det F)\,.\vspace{-0.15cm}
	\end{equation}
}}

The energy $\Ws(F)$ is strictly rank one convex for all $t\in(1,\infty)$, but unlike $\Wmm(F)$, it is only a least rank-one candidate in the sense that it is non-strictly separately convex for $t\to\infty$ and satisfies condition 2) of Theorem \ref{theorem:mainVolisLog2} by equality for $t=1$. 
%
%
%
%%%%%%%%%%%%%%%%%%%%%%%%%%%%%%%%%%
\subsection{Obtaining $\Wn$}\label{sec:Wn}
We generate a third candidate using our original method with focus on the last two inequalities of Theorem \ref{theorem:mainVolisLog2} and start with condition
\begin{flalign}
	3a)&&\frac{2t}{t-1}\.h'(t)-t^2\.h''(t)+1=0\qquad\iff\qquad h''(t)=\frac{2}{t\.(t-1)}\.h'(t)+\frac{1}{t^2}\,.&&
\end{flalign}
We solve the differential equation by adding a particular solution to the associated homogeneous equation:
\begin{align}
	&&h'(t)&=x_h(t)+x_p(t)\qquad\text{with}\qquad x_p(t)=\frac{1-t}{t^2}\qquad\text{and}\\
	&&x_h'(t)&=2\left(\frac{1}{t-1}-\frac{1}{t}\right)x_h(t)\notag\\
	&\iff&\log x_h(t)&=2\left(\log|t-1|-\log t\right)+c\notag\\
	&\iff& x_h(t)&=c\left(\frac{t-1}{t}\right)^2\,.
\end{align}
Thus
\begin{equation}
	h'(t)=c\left(1-\frac{2}{t}+\frac{1}{t^2}\right)-\frac{1}{t}+\frac{1}{t^2}\qquad\iff\qquad h(t)=c\left(t-2\.\log t-\frac{1}{t}\right)-\log t-\frac{1}{t}+b\,.
\end{equation}
However, the monotonicity condition
\begin{equation}
	h'(t)=\frac{c\.(t-1)^2-(t-1)}{t^2}\geq 0\qquad\iff\qquad c\geq\frac{1}{t-1}\qquad\text{for all}\quad t>1
\end{equation}
cannot be satisfied in the limit $t\to1$. Thus there does not exist a rank-one convex candidate which satisfies condition 3a) by equality for all $t>1$. We continue with condition
\begin{flalign}
	4a)&&\qquad\frac{2t}{t+1}\.h'(t)+t^2\.h''(t)-1=0\qquad\iff\qquad h''(t)=-\frac{2}{t\.(t-1)}\.h'(t)+\frac{1}{t^2}\,.&&
\end{flalign}
Again, we solve the differential equation by adding a particular solution to the associated homogeneous equation:
\begin{align}
	&&h'(t)&=x_h(t)+x_p(t)\qquad\text{with}\qquad x_p(t)=-\frac{t+1}{t^2}\qquad\text{and}\\
	&&x_h'(t)&=2\left(\frac{1}{t+1}-\frac{1}{t}\right)x_h(t)\notag\\
	&\iff&\log x_h(t)&=2\left(\log(t+1)-\log t\right)+c\notag\\
	&\iff& x_h(t)&=c\left(\frac{t+1}{t}\right)^2.
\end{align}
Thus
\begin{equation}
	h'(t)=c\left(1+\frac{2}{t}+\frac{1}{t^2}\right)-\frac{1}{t}-\frac{1}{t^2}\qquad\iff\qquad h(t)=c\left(t+2\.\log t-\frac{1}{t}\right)-\log t+\frac{1}{t}+b\,.
\end{equation}
We set $b=0$ and determine $c\in\R$ by evaluating the remaining rank-one convexity conditions. The separate convexity condition 1) yields
\begin{align}
	h_0&=\inf_{t\in(1,\infty)}t^2h''(t)=t^2\left[c\left(-\frac{2}{t^3}-\frac{2}{t^2}\right)+\frac{2}{t^3}+\frac{1}{t^2}\right]=\inf_{t\in(1,\infty)}2\.(1-c)\.\frac{1}{t}+1-2\.c \geq-1\,.
\end{align}
In the case $c\geq 1$, the infimum is attained at $t=1$ and
\[
	3-4\.c\geq-1\qquad\iff\qquad c\leq 1\,.
\]
For the case $c\leq 1$, the infimum is at $t\to\infty$ and
\[
	1-2\.c\geq-1\qquad\iff\qquad c\leq 1\,.
\]
Thus $c\leq 1$. The monotonicity condition
\begin{flalign}
	2)&&h'(t)=\frac{c\.(t+1)^2-(t+1)}{t^2}\geq 0\qquad\iff\qquad c\geq\frac{1}{t+1}\qquad\text{for all}\quad t\geq 1&&
\end{flalign}
is satified for $c\geq\frac{1}{2}$. The remaining condition 3a) leads to
\begin{align}
	&&\frac{2\.t}{t-1}\.\frac{c\.(t+1)^2-(t+1)}{t^2}-\left[c\left(-\frac{2}{t^3}-\frac{2}{t^2}\right)+\frac{2}{t^3}+\frac{1}{t^2}\right]+1&\geq 0\notag\\
	&\iff& 2\.\frac{c\.(t+1)^2-(t+1)}{t\.(t-1)}+2\.(c-1)\.\frac{t+1}{t}&\geq 0\notag\\
	&\iff& \frac{c\.(t+1)^2-(t+1)+(c-1)\.(t+1)(t-1)}{t-1}&\geq 0\\
	&\iff& \frac{c\.(2\.t^2+2\.t)-t^2-t}{t-1}&\geq 0\notag\\
	&\iff& 2\.c-1&\geq 0\qquad\iff\qquad c\geq\frac{1}{2}\notag\,.
\end{align}
Thus we can consider the two borderline cases
\begin{equation}
	c=\frac{1}{2}\col\qquad h(t)=\frac{1}{2}\left(t+\frac{1}{t}\right),\qquad\text{and}\qquad c=1\col\qquad h(t)=t+\log t\,.
\end{equation}
However, the first function
\begin{equation}
	h\left(\frac{\lambda_1}{\lambda_2}\right)=\frac{1}{2}\left(\frac{\lambda_1}{\lambda_2}+\frac{\lambda_2}{\lambda_1}\right)=\frac{\lambda_1^1+\lambda_2^2}{2\.\lambda_1\lambda_2}=\frac{\norm{F}^2}{2\det F}=\K(F)
\end{equation}
is already polyconvex and therefore not suitable to provide an example for Morrey's question with respect to volumetric-isochorically split energies. The second part $h(t)=t+\log t$ is already known as the iscchoric part of $\Wmm(F)$.

We consider the condition
\begin{flalign}
	3b)&& t^2(t^2-1)\.h'(t)h''(t)-2t\.h'(t)^2+(t^2-4t+3)\.h'(t)+2t\.(t-1)^2\.h''(t)=0\,.&&
\end{flalign}
With the support of Mathematica, the only solution that also satisfies the remaining inequalities turns out to be
\begin{align}
	h(t)&=\frac{1}{2c\.t}\left[1+\sqrt{4c\.t+(1+t)^2}+t^2-2\.(1+2\.\sqrt{1+c})\.t+\sqrt{1+2\.(1+2c)\.t+t^2}\right]+2\.\log\left(2\.(1+c+\sqrt{1+c})\right)\notag\\
	&\phantom{=}\;+\log t-\log\left(1-2c+t+\sqrt{4c\.t+(1+t)^2}\right)-\log\left(1+(1+2c)\.t+\sqrt{4c\.t+(1+t)^2}\right)
\end{align}
with an arbitrary constant $c>0$. The condition 1) is non-strict in the limit $t\to\infty$ and condition 2) is non-strict for $t=1$ which is a similar behavior to that of $\Ws(F)$. In the case of $c=1$, condition 4a) is non-strict in the limit $t\to\infty$ and we obtain
\begin{equation}
	h(t)=\frac{1}{2\.t}\left(1+t^2+(1+t)\.\sqrt{1+6\.t+t^2}\right)-\log t+\log\left(1+4\.t+t^2+(1+t)\.\sqrt{1+6\.t+t^2}\right)
\end{equation}
as our new least rank-one convex candidate. Note that the relation
\begin{align}
	h(t)=\frac{1}{2}\left(t+\frac{1}{t}+\sqrt{2+t^2+\frac{1}{t^2}+6\left(t+\frac{1}{t}\right)}\right)+\log\left(4+t+\frac{1}{t}+\sqrt{2+t^2+\frac{1}{t^2}+6\left(t+\frac{1}{t}\right)}\right)=h\left(\frac{1}{t}\right)\notag
\end{align}
implies that $h$ is smooth at $t=1$. Together with the volumetric part $f(z)=-\log z$ we arrive at

\fbox{\parbox{0.99\textwidth}{
	\setlength{\abovedisplayskip}{0pt}
	\setlength{\belowdisplayskip}{0pt}
	\begin{align}
		\Wn(F)&\colonequals\frac{1}{2}\left(\frac{\lmax}{\lmin}+\frac{\lmin}{\lmax}+\mathcal{F}\left(\frac{\lmax}{\lmin}\right)\right)+\log\left(4+\frac{\lmax}{\lmin}+\frac{\lmin}{\lmax}+\mathcal{F}\left(\frac{\lmax}{\lmin}\right)\right)-\log\det F\,,\notag\\
		\mathcal{F}(t)&=\sqrt{2+t^2+\frac{1}{t^2}+6\left(t+\frac{1}{t}\right)},\vspace{-0.15cm}
	\end{align}
}}

which is equivalent to
\begin{align*}
	\Wn(F)=\K(F)+\sqrt{\K^2(F)+3\.\K(F)}+\log\left(2+\K(F)+\sqrt{\K^2(F)+3\.\K(F)}\right)+\log 2-\log\det F\,.
\end{align*}
For the last differential equation of Theorem \ref{theorem:mainVolisLog2},
\begin{flalign}
	4b)&& t^2(t^2-1)\.h'(t)h''(t)-2t\.h'(t)^2+(t^2+4t+3)\.h'(t)+2t\.(t+1)^2\.h''(t)=0\,,&&
\end{flalign}
we use Mathematica again, but this time the already known candidate $\Wmm(F)$ is the only solution which also satisfies the remaining conditions as a least rank-one convex candidate. 
%
%
%
%%%%%%%%%%%%%%%%%%%%%%%%%%%%%%%%%%
\subsection{Comparison}
Overall, we have obtained three energy function $\Wmm(F), \Ws(F)$ and $\Wn(F)$ which satisfy different conditions of Theorem \ref{theorem:mainVolisLog2} by equality (cf.\ Table \ref{fig:Mm_candidates} and Figure \ref{fig:Mm_candidates2}). We use them as least rank-one convex representatives of $\Mm$.
\begin{table}[h!]
	\centering
	\renewcommand{\arraystretch}{1.5}
	\begin{tabular}{l|c|c|cc|cc}
		Energy & 1) & 2) & 3a) & 3b) & 4a) & 4b)\\\hline
 		$\Wmm(F)$ & {\color{ForestGreen}$\equiv$} & $\checkmark$ & $\checkmark$ & \xmark & {\color{ForestGreen}$\equiv$} & {\color{ForestGreen}$\equiv$}\\
 		$\Ws(F)$ & {\color{ForestGreen}$t\to\infty$} & {\color{ForestGreen}$t=1$} & $\checkmark$ & \xmark & \xmark & $\checkmark$\\
 		$\Wn(F)$ & {\color{ForestGreen}$t\to\infty$} & {\color{ForestGreen}$t=1$} & $\checkmark$ & {\color{ForestGreen}$\equiv$} & {\color{ForestGreen}$t\to\infty$} & $\checkmark$\\\hline
 		$\Wmp(F)$ & {\color{ForestGreen}$\equiv$} & {\color{ForestGreen}$t=1$} & {\color{ForestGreen}$\equiv$} & {\color{ForestGreen}$t=1$} & $\checkmark$ & $\checkmark$
	\end{tabular}
	\caption{All three candidates $\Wmm(F),\Ws(F)$ and $\Wn(F)\in\Mm$ satisfy different conditions of Theorem \ref{theorem:mainVolisLog2} non-strictly; the {\color{ForestGreen}$\equiv$} indicates equality for all $t\geq 1$. Here, $\checkmark$ represents that the inequality condition is strictly satisfied (and not by equality) while \xmark\ states that condition is not satifsied for all $t\in(0,\infty)$. Only one of the two conditions 3a) and 3b) and one of the two conditions 4a) and 4b) have to be fulfilled.}\label{fig:Mm_candidates}
\end{table}
\begin{figure}[h!]
  	\centering
    \begin{tikzpicture}
		\begin{axis}[
        axis x line=middle,axis y line=middle,
        x label style={at={(current axis.right of origin)},anchor=north, below},
        xlabel=$t$, ylabel=$h(t)$,
        xmin=-1, xmax=9.9,
        ymin=-0.99, ymax=10,
        width=\singleGraph\linewidth,
        height=\singleGraph*\graphRatio\linewidth,
        ytick=\empty,
        ]
        \addplot[red, smooth][domain=0.05:5.6262, samples=\sample]{1/(2*x)*(1+x^2+(1+x)*sqrt(1+6*x+x^2))-ln(x)+ln(1+4*x+x^2+(1+x)*sqrt(1+6*x+x^2))-6.28432};
        \addplot[red, dashed][domain=5.6262:10, samples=\sample]{1/(2*x)*(1+x^2+(1+x)*sqrt(1+6*x+x^2))-ln(x)+ln(1+4*x+x^2+(1+x)*sqrt(1+6*x+x^2))-6.28432} node[below,pos=0.8,yshift = -0.3cm] {$\Wn$};
        \addplot[black, smooth][domain=0.05:1, samples=\sample]{1/x-ln(x)-1};
        \addplot[black, dashed][domain=1:10, samples=\sample]{x+ln(x)-1} node[above,pos=0.75,xshift = -0.5cm] {$\Wmm$};
       	\addplot[udedarkblue, smooth][domain=0.03:1, samples=\sample]{1/(3*x)+4*x/3-ln(x)-5/3};
        \addplot[udedarkblue, smooth][domain=1:8/3, samples=\sample]{x/3+4/(3*x)+ln(x)-5/3};
        \addplot[udedarkblue, dashed][domain=8/3:10, samples=\sample]{x/3+4/(3*x)+ln(x)-5/3} node[below,pos=0.8,yshift = -0.2cm] {$\Ws$};
        \end{axis}
  \end{tikzpicture}
  \caption{Visualization of the isochoric part $h\col\Rp\to\R$ with $h(t)=h\bigl(\frac1t\bigr)$ of the three candidates $\Wmm, \Ws$ and $\Wn\in\Mm$. The non-convex regions of $h$ are shown by the dashed lines.}\label{fig:Mm_candidates2}
\end{figure}

Unfortunately, we cannot show yet that we identified all least rank-one convex energies in the class $\Mm$. Similar to Section \ref{sec:Ws} it is possible to construct more least rank-one convex energies, in the sense that they satisfy at least one inequality condition of Theorem \ref{theorem:mainVolisLog2} non-strictly for an arbitrary $t\in[1,\infty)$ or in the limit $t\to\infty$ by choosing a different ansatz than equation \eqref{eq:ansatzWs}. Therefore, it is not sufficient to test these three candidates for quasiconvexity and conclude the quasiconvexity behavior for the whole energy class $\Mm$. This has the following reasons: First, we only considered the case where the isochoric part satisfies one of the conditions 3a) and 4a) for all $t\geq 1$ or 3b) and 4b) for all $t\geq 1$. This is a more restrictive constraint than needed for rank-one convexity, e.g.\ it would be sufficient if an energy satisfies condition 3a) for some interval $I\subset\Rp$ and condition 3b) for the remaining part $\Rp\setminus I$. It is only necessary that for every $t\geq 1$ one of the two conditions 3a) or 3b) (and 4a) or 4b) respectively) is satisfied.

It is not clear how to decide which of our three energies $\Wmm,\,\Ws,\,\Wn$ is the best candidate for Morrey's conjecture because each energy is already least rank-one convex in the sense of satisfying one inequality condition of Theorem \ref{theorem:mainVolisLog2} non-strictly. Thus the difference between two least rank-one convex candidates is neither rank-one convex nor a rank-one concave function, e.g.\ the transition from $\Wmm$ to $\Ws$ is done by adding the isochoric part $h(t)=\frac{4}{3}\.\frac{1}{t}-\frac{2}{3}\.t$ which is convex but monotone decreasing. An energy function can also be non-rank-one convex and non-concave simultaneously at some $F_0\in\GLp(2)$, similar to a saddle point. This makes it very challenging, if not impossible, to find a single weakest rank-one convex candidate for $\Mm$.

Still, with separate convexity as the most important part of rank-one convexity, it is reasonable to start with the candidate $\Wmm(F)$ and check for quasiconvexity. Our first (negative) result in this respect is the following.
\begin{lemma}\label{lem:WmmPolyconvex}
	The energy
	\[
		\Wmm\col\GLp(2)\to\R\,,\quad \Wmm(F) = \frac{\lambdamax}{\lambdamin} + \log\left( \frac{\lambdamax}{\lambdamin} \right) - \log(\lambdamax\.\lambdamin) = \frac{\lambdamax}{\lambdamin} - 2\.\log(\lambdamin)\,.
	\]
	is polyconvex.
\end{lemma}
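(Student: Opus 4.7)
The plan is to exhibit an explicit convex function $P\col\R^{2\times 2}\times\R\to\R\cup\{+\infty\}$ with $P(F,\det F)=\Wmm(F)$ for every $F\in\GLp(2)$. Using $\lmax=\|F\|_{\mathrm{op}}$ and $\lmin=\det F/\|F\|_{\mathrm{op}}$ in two dimensions, I rewrite
\[
	\Wmm(F) \;=\; \frac{\|F\|_{\mathrm{op}}^2}{\det F} + 2\log\|F\|_{\mathrm{op}} - 2\log\det F \;=\; K(F) + \log K(F) - \log\det F,
\]
with the linear distortion $K(F)=\|F\|_{\mathrm{op}}^2/\det F$. The piece $K(F)$ is polyconvex through the perspective representation $(F,d)\mapsto\|F\|_{\mathrm{op}}^2/d$, which is jointly convex on $\R^{2\times 2}\times(0,\infty)$ as the perspective of the convex function $F\mapsto\|F\|_{\mathrm{op}}^2$, and $-\log\det F$ is polyconvex as a convex function of $\det F$.

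My first attempt is the natural candidate $P(F,d):=\|F\|_{\mathrm{op}}^2/d+2\log\|F\|_{\mathrm{op}}-2\log d$. The perspective term and $-2\log d$ are individually convex in $(F,d)$; the obstruction comes from $2\log\|F\|_{\mathrm{op}}$, which is neither convex nor concave in $F$ alone, so its interaction with the other two terms must be tracked. The main obstacle is whether the three summands combine to a jointly convex $P$; a Hessian computation along the scalar line $F=u\,\id$ shows that $(u,d)\mapsto u^2/d+2\log u-2\log d$ has indefinite Hessian, so the plain ansatz cannot succeed without modification and a more refined representation is required.

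If the direct candidate fails, my fallback is a supporting-hyperplane construction. At each $F_0\in\GLp(2)$ with distinct singular values I compute $\nabla_F\Wmm(F_0)$ and choose a scalar $c_0\in\R$ such that the affine function
\[
	L_{F_0}(F,d)\;=\;\Wmm(F_0)+\bigl(\nabla_F\Wmm(F_0)-c_0\operatorname{cof}(F_0)\bigr):(F-F_0)+c_0(d-\det F_0)
\]
satisfies $\Wmm(F)\ge L_{F_0}(F,\det F)$ for all $F\in\GLp(2)$. The pointwise supremum over $F_0$ then yields a convex $P$ with $P(F,\det F)=\Wmm(F)$, which is precisely polyconvexity. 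A concrete check at $F_0=\mathrm{diag}(2,1)$ with $c_0=-1$ produces the affine bound $2F_{11}-2F_{22}-\det F+2\le\Wmm(F)$, tight along the family $\{\mathrm{diag}(a,1)\col a\ge 1\}$, which supports the viability of this route.

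Whichever path succeeds, the decisive step is the global lower bound: it relies on the blow-up $\Wmm(F)\to+\infty$ as $\lmin\to 0^+$ from the $-2\log\lmin$ term—the very property whose failure ruled out polyconvexity of $\Wmp$ in Lemma~\ref{lem:WmpNotPolyconvex}—combined with the rank-one convexity of $\Wmm$ verified via Theorem~\ref{theorem:mainVolisLog2} at the start of Section~\ref{sec:Wmm}.
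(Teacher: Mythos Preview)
Your fallback strategy---constructing at every $F_0\in\GLp(2)$ an affine minorant in $(F,\det F)$---is exactly the right idea and is, in fact, precisely what the paper does: the \v{S}ilhav\'{y}/Mielke criterion it invokes is nothing other than the supporting-hyperplane characterisation of polyconvexity for isotropic planar energies, expressed in singular values. Your sample check at $F_0=\diag(2,1)$ with $c_0=-1$ even recovers the paper's choice $c=-1/\gamma_2^2$ at $\gamma_2=1$.

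The genuine gap is that you never carry out this verification in general, and the closing sentence suggests the wrong mechanism for why it should work. You write that the global lower bound ``relies on the blow-up $\Wmm(F)\to+\infty$ as $\lmin\to 0^+$ \dots\ combined with the rank-one convexity of $\Wmm$.'' But rank-one convexity together with blow-up at the boundary of $\GLp(2)$ does \emph{not} imply the existence of affine-in-$(F,\det F)$ supports---that implication would settle Morrey's conjecture in the negative for the whole class under discussion. The blow-up is merely a necessary condition (its failure is what kills $\Wmp$), not a sufficient one. What is actually needed is a direct computation: fix $\gamma_1\geq\gamma_2>0$, take $c=-1/\gamma_2^2$, and show by explicit algebra that
\[
\frac{\nu_1}{\nu_2}-2\log\nu_2 \;\geq\; \frac{\gamma_1}{\gamma_2}-2\log\gamma_2 + \frac{1}{\gamma_2}(\nu_1-\gamma_1) - \frac{2\gamma_2+\gamma_1}{\gamma_2^2}(\nu_2-\gamma_2) - \frac{1}{\gamma_2^2}(\nu_1-\gamma_1)(\nu_2-\gamma_2)
\]
for all $\nu_1\geq\nu_2>0$. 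The paper reduces this, after cancellation, to $\nu_1\cdot\frac{(\gamma_2-\nu_2)^2}{\gamma_2^2\nu_2}+2\log(\gamma_2/\nu_2)+2\nu_2/\gamma_2-2\geq 0$, then uses $\nu_1\geq\nu_2$ and the scalar inequality $t-\log t-1\geq 0$. That computation---not rank-one convexity---is the heart of the proof, and it is missing from your proposal.
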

\begin{proof}	
	We check for polyconvexity\index{Polyconvex!function} with a Theorem due to \v{S}ilhav\'{y} \cite[Proposition 4.1]{vsilhavy2002convexity} (see also \cite[Theorem 4.1]{mielke2005necessary}) for energy functions in terms of the (ordered) singular values. It states that any $W\col\GLp(2)\to\R$ of the form
	\begin{equation}\label{eq:mielkeCriterionRepresentation}
		W(F) = \ghat(\lambdamax,\lambdamin) \qquad\text{for all $F\in\GLp(2)$ with singular values }\lambdamax\geq\lambdamin
	\end{equation}
	is polyconvex if and only if for every $\gamma_1\geq\gamma_2>0$, there exists
	\begin{equation}\label{eq:mielkeCriterionInterval}
		c\in \left[ -\frac{f_1-f_2}{\gamma_1-\gamma_2}\,,\; \frac{f_1+f_2}{\gamma_1+\gamma_2} \right],\qquad f_i = \dd{\gamma_i}\.\ghat(\gamma_1,\gamma_2)\,,
	\end{equation}
	such that for all $\nu_1\geq\nu_2>0$,
	\begin{equation}\label{eq:mielkeCriterionInequality}
		\ghat(\nu_1,\nu_2) \geq \ghat(\gamma_1,\gamma_2) + f_1\.(\nu_1-\gamma_1) + f_2\.(\nu_2-\gamma_2) + c\.(\nu_1-\gamma_1)\.(\nu_2-\gamma_2)\,.
	\end{equation}
	The energy $\Wmm$ can be expressed in the form \eqref{eq:mielkeCriterionRepresentation} with $\ghat$ such that
	\[
		\ghat(x,y) = \frac{x}{y} - 2\.\log(y)
	\]
	for $x>y>0$. In particular, for $\gamma_1\geq\gamma_2>0$,
	\begin{align}
		f_1 &= \dd{\gamma_1}\.\ghat(\gamma_1,\gamma_2) = \dd{\gamma_1}\left[\frac{\gamma_1}{\gamma_2} - 2\.\log(\gamma_2) \right] = \frac{1}{\gamma_2}\,,\notag\\
		f_2 &= \dd{\gamma_2}\.\ghat(\gamma_1,\gamma_2) = \dd{\gamma_2}\left[\frac{\gamma_1}{\gamma_2} - 2\.\log(\gamma_2)\right] = -\frac{2}{\gamma_2} - \frac{\gamma_1}{\gamma_2^2} = -\frac{2\.\gamma_2+\gamma_1}{\gamma_2^2}\,.
	\end{align}
	Now, for any given $\gamma_1\geq\gamma_2>0$, choose
	\[
		c = \frac{f_1+f_2}{\gamma_1+\gamma_2} = \frac{\frac{1}{\gamma_2}-\frac{2\gamma_2+\gamma_1}{\gamma_2^2}}{\gamma_1+\gamma_2} = \frac{\frac{-\gamma_2-\gamma_1}{\gamma_2^2}}{\gamma_1+\gamma_2} = - \frac{1}{\gamma_2^2}\cdot\frac{\gamma_1+\gamma_2}{\gamma_1+\gamma_2} = -\frac{1}{\gamma_2^2}\,.
	\]
	In order to establish the proposition, it suffices to show that for any $\nu_1\geq\nu_2>0$,
	\begin{align}
		0&\leq\ghat(\nu_1,\nu_2) - \ghat(\gamma_1,\gamma_2) - f_1\.(\nu_1-\gamma_1) - f_2\.(\nu_2-\gamma_2) - c\.(\nu_1-\gamma_1)\.(\nu_2-\gamma_2)\notag\\
		&= \frac{\nu_1}{\nu_2} - 2\.\log(\nu_2) - \frac{\gamma_1}{\gamma_2} + 2\.\log(\gamma_2) - f_1\.(\nu_1-\gamma_1) - f_2\.(\nu_2-\gamma_2) - c\.(\nu_1-\gamma_1)\.(\nu_2-\gamma_2)\notag\\
		&= \frac{\nu_1}{\nu_2} - \frac{\gamma_1}{\gamma_2} + 2\.\log\left(\frac{\gamma_2}{\nu_2}\right) - \frac{1}{\gamma_2}\cdot(\nu_1-\gamma_1) + \frac{2\.\gamma_2+\gamma_1}{\gamma_2^2}\cdot(\nu_2-\gamma_2) + \frac{1}{\gamma_2^2}\cdot (\nu_1-\gamma_1)\.(\nu_2-\gamma_2)\notag\\
		&= \frac{\nu_1}{\nu_2} - \frac{\gamma_1}{\gamma_2} + 2\.\log\left(\frac{\gamma_2}{\nu_2}\right) - \frac{\nu_1}{\gamma_2} + \frac{\gamma_1}{\gamma_2} + \frac{1}{\gamma_2^2}\cdot \bigl( (2\.\gamma_2+\gamma_1)\.(\nu_2-\gamma_2) + (\nu_1-\gamma_1)\.(\nu_2-\gamma_2) \bigr)\\
		&= \frac{\nu_1}{\nu_2} - \frac{\nu_1}{\gamma_2} + 2\.\log\left(\frac{\gamma_2}{\nu_2}\right) + \frac{1}{\gamma_2^2}\.(2\.\gamma_2+\nu_1)\.(\nu_2-\gamma_2)\notag\\
		&= \nu_1\cdot \left( \frac{1}{\nu_2} - \frac{1}{\gamma_2}  + \frac{\nu_2-\gamma_2}{\gamma_2^2} \right) + 2\.\log\left(\frac{\gamma_2}{\nu_2}\right) + \frac{1}{\gamma_2^2}\cdot2\.\gamma_2\cdot(\nu_2-\gamma_2)\notag\\
		&= \nu_1\cdot\frac{\gamma_2^2-\gamma_2\.\nu_2+\nu_2^2-\gamma_2\.\nu_2}{\gamma_2^2\.\nu_2} + 2\.\log\left(\frac{\gamma_2}{\nu_2}\right) + \frac{2\.(\nu_2-\gamma_2)}{\gamma_2}\notag\\
		&= \nu_1\cdot\frac{(\gamma_2-\nu_2)^2}{\gamma_2^2\.\nu_2} + 2\.\log\left(\frac{\gamma_2}{\nu_2}\right) + 2\.\frac{\nu_2}{\gamma_2} - 2\,.\notag
	\end{align}
	Since $\nu_1\geq\nu_2>0$ by assumption,
	\begin{align}
		&\hspace*{-2.1em}\nu_1\cdot\frac{(\gamma_2-\nu_2)^2}{\gamma_2^2\.\nu_2} + 2\.\log\left(\frac{\gamma_2}{\nu_2}\right) + 2\.\frac{\nu_2}{\gamma_2} - 2\notag\\
		&\geq \nu_2\cdot\frac{(\gamma_2-\nu_2)^2}{\gamma_2^2\.\nu_2} + 2\.\log\left(\frac{\gamma_2}{\nu_2}\right) + 2\.\frac{\nu_2}{\gamma_2} - 2\notag\\
		&= \frac{\gamma_2^2-2\.\gamma_2\.\nu_2+\nu_2^2}{\gamma_2^2} + 2\.\log\left(\frac{\gamma_2}{\nu_2}\right) + 2\.\frac{\nu_2}{\gamma_2} - 2\\
		&= 1 + \left( \frac{\nu_2}{\gamma_2} \right)^2 - 2\.\frac{\nu_2}{\gamma_2} + 2\.\log\left(\frac{\gamma_2}{\nu_2}\right) + 2\.\frac{\nu_2}{\gamma_2} - 2\notag\\
		&= \left( \frac{\nu_2}{\gamma_2} \right)^2 + 2\.\log\left(\frac{\gamma_2}{\nu_2}\right) - 1
		\;=\; \left( \frac{\nu_2}{\gamma_2} \right)^2 - \log\left(\left(\frac{\nu_2}{\gamma_2}\right)^2\right) - 1 \;\geq\; 0\,,\notag
	\end{align}
	where the final inequality holds since $t\geq1+\log(t)$ for all $t>0$.
\end{proof}
\newgeometry{top=15mm, left=20mm}
\newpage
\begin{figure}[h!]
	\includegraphics[clip, trim=2.5cm 13cm 6cm 13cm, width=1.1\textwidth]{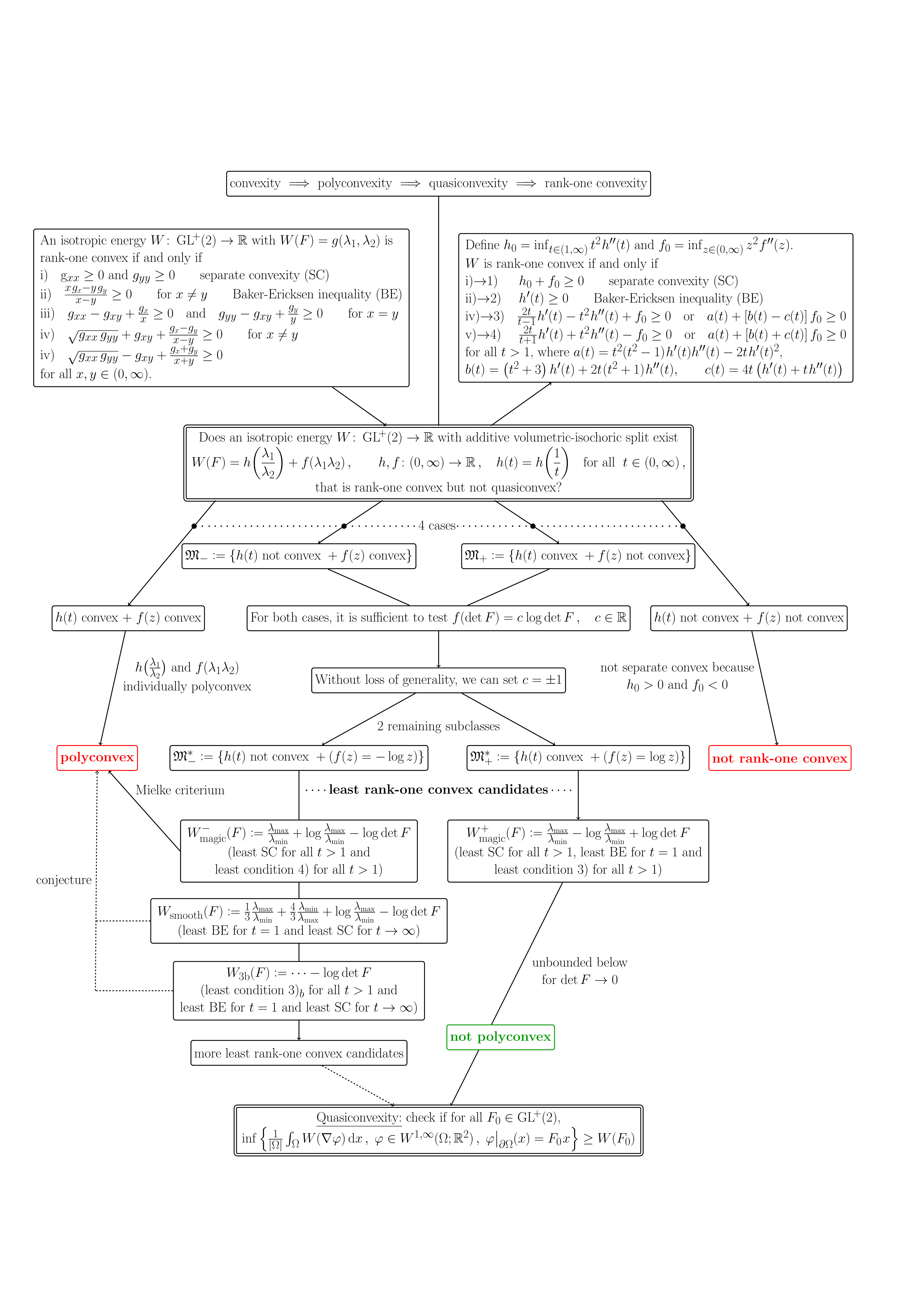}
	\caption{\normalsize Overview of the reduction from Morrey's question for the class of isotropic planar energies with an additive volumetric-isochoric split to the question whether or not the least rank-one convex candidates are quasiconvex. For energies with a convex isochoric term $h(t)$ it is sufficient to test the non-polyconvex candidate $\Wmp$. Energies with a non-convex isochoric term $h(t)$ cannot be reduced to a single candidate ($\Wmm\,,\;\Ws\,,\;\Wn\,,\cdots$) and it remains to show whether or not there exists a non-polyconvex representative.}\label{fig:morreySummary}
\end{figure}
\restoregeometry
%
%
%
%
%
%%%%%%%%%%%%%%%%%%%%%%%%%%%%%%%%%%%%%%%%%%%%%%%%%%%%%%%%%%%%%%%%%
\section{Connection to the Burkholder functional}\label{sec:Burkholder}
Recall that quasiconvexity of $\Wmp(F)$ would imply quasiconvexity for all rank-one convex energies with an additive volumetric-isochoric split and a convex isochoric part, while proving the opposite would lead to an example of Morrey's conjecture.
\begin{definition}
	For an energy function $W\col\GLp(n)\to\R$, we define its \emph{Shield transformation} \cite{shield1967inverse,ball1977constitutive} as
	\begin{equation}\label{eq:shieldTransformation}
		W^\#\col\GLp(n)\to\R\,,\qquad W^\#(F)=\det(F)\cdot W(F\inv)\,.
	\end{equation}
\end{definition}
\begin{lemma}\label{lemma:shieldTransformationProperties}
	Let $W,W_1,W_2\col\GLpn\to\R$. Then
	\begin{itemize}
		\item[i)] $W^{\#\#}=W$,
		\item[ii)] $W_1\leq W_2$ if and only if $W_1^\#\leq W_2^\#$,
		\item[iii)] $W$ is rank-one convex if and only if $W^\#$ is rank-one convex,
		\item[iv)] $W$ is $C_0^\infty$-quasiconvex if and only if $W^\#$ is $C_0^\infty$-quasiconvex,\footnotemark
		\item[v)] $W$ is polyconvex if and only if $W^\#$ is polyconvex.
	\end{itemize}
\end{lemma}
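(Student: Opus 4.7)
The plan is to prove each item by exploiting the involutive character of the Shield transformation: property (i) makes $W \mapsto W^\#$ its own inverse, so each equivalence in (ii)--(v) reduces to a single implication. For (i), direct substitution gives
\[
W^{\#\#}(F) \;=\; \det F \cdot W^\#(F\inv) \;=\; \det F \cdot \det(F\inv) \cdot W(F) \;=\; W(F).
\]
For (ii), since $F \mapsto F\inv$ is a bijection of $\GLp(n)$ and $\det F > 0$ throughout $\GLp(n)$, the inequality $W_1 \leq W_2$ is preserved by $W \mapsto W^\#$ and, by (i), also by its inverse.

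For (iii), I would parametrize a rank-one segment $F(t) = F_0 + t\,\xi \otimes \eta$ in $\GLp(n)$ and use the matrix determinant lemma $\det F(t) = \det F_0 \cdot (1 + t\,\alpha)$ with $\alpha := \eta \cdot F_0\inv\xi$, together with the Sherman--Morrison identity
\[
F(t)\inv \;=\; F_0\inv \;-\; \frac{t}{1+t\,\alpha}\,p\otimes q, \qquad p := F_0\inv\xi, \quad q := F_0^{-T}\eta.
\]
Thus $t \mapsto F(t)\inv$ traces, after the monotone M\"obius reparametrization $s(t) := t/(1+t\alpha)$, a rank-one segment through $F_0\inv$. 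Substituting gives $W^\#(F(t)) = \det F_0 \cdot (1+t\alpha) \cdot W(F_0\inv - s(t)\,p\otimes q)$, and a second-derivative computation (or a finite-difference argument in the non-smooth case) shows that the affine prefactor $(1+t\alpha)$ precisely compensates the Jacobian of the M\"obius reparametrization, so that $t \mapsto W^\#(F(t))$ is convex iff $s \mapsto W(F_0\inv - s\,p\otimes q)$ is convex. Since the correspondence $(F_0,\xi,\eta) \leftrightarrow (F_0\inv, p, q)$ exhausts all rank-one data on $\GLp(n)$, this gives the equivalence.

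For (iv), I would change variables $y = \varphi(x)$. Given a test deformation $\varphi(x) = F_0 x + \vartheta(x)$ with $\vartheta \in C_0^\infty(\Omega;\Rn)$ and $\nabla\varphi \in \GLp(n)$, the map $\varphi$ is a diffeomorphism onto $F_0\Omega$; its inverse $\psi = \varphi\inv$ satisfies $\psi(y) - F_0\inv y \in C_0^\infty(F_0\Omega;\Rn)$ and $\nabla\psi(\varphi(x)) = (\nabla\varphi(x))\inv$. Then
\[
\int_\Omega W^\#(\nabla \varphi)\,dx \;=\; \int_\Omega \det(\nabla\varphi)\,W\bigl((\nabla\varphi)\inv\bigr)\,dx \;=\; \int_{F_0\Omega} W(\nabla\psi)\,dy,
\]
while $|\Omega|\,W^\#(F_0) = |F_0\Omega|\,W(F_0\inv)$, so $C_0^\infty$-quasiconvexity of $W^\#$ at $F_0$ is equivalent to that of $W$ at $F_0\inv$; varying $F_0$ yields the claim. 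For (v), Jacobi's identity for minors expresses every $k$-minor of $F\inv$ as $(\det F)\inv$ times an $(n{-}k)$-minor of $F$, so if $W(G) = P(G, \adj_2 G,\dots,\adj_n G)$ with $P$ convex, then $W^\#(F) = \det F \cdot P(\cdots)$ where each argument has the form $\adj_j(F)/\det F$. Applying the perspective-function identity (if $P$ is convex, then $(X,d) \mapsto d\,P(X/d)$ is jointly convex on its natural domain for $d > 0$) produces a convex representation of $W^\#$ in the tuple $(F, \adj_2 F, \dots, \adj_n F)$.

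The main obstacle is (iii): convexity is not preserved under arbitrary monotone reparametrizations, so the argument depends crucially on the $\det F$ prefactor in the definition of $W^\#$ cancelling the Jacobian of the M\"obius change of variable --- remove the prefactor and the claim fails. A secondary subtlety is the admissibility assumption in (iv): restricting to $C_0^\infty$-test functions (possibly under an implicit smallness requirement to ensure $\det\nabla\varphi > 0$) is precisely what guarantees that $\varphi$ is a global diffeomorphism, so that $\psi = \varphi\inv$ is defined on all of $F_0\Omega$ and the change of variables is legitimate.
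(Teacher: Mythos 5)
Your proof is correct, and parts (i)--(ii) are argued exactly as in the paper. The genuine difference is in (iii)--(v): the paper does not prove these but simply cites Ball's Theorem~2.6 in \cite{ball1977constitutive} for the invariance of rank-one convexity, $C_0^\infty$-quasiconvexity and polyconvexity under the Shield transformation, then invokes the involution (i) to upgrade the one-way implications to equivalences. You instead reconstruct the underlying arguments: for (iii) the Sherman--Morrison and matrix-determinant identities give $W^\#(F_0+t\,\xi\otimes\eta)=\det F_0\,(1+t\alpha)\,g(s(t))$ with $g(s)=W(F_0^{-1}-s\,p\otimes q)$ and $s(t)=t/(1+t\alpha)$, and the computation $\frac{d^2}{dt^2}\bigl[(1+t\alpha)g(s(t))\bigr]=(1+t\alpha)^{-3}g''(s(t))$ with $1+t\alpha=\det F(t)/\det F_0>0$ on the admissible segment shows the signs match, which (given that the reparametrization exhausts all rank-one data on $\GLp(n)$) yields the equivalence; for (iv) the change of variables $y=\varphi(x)$; and for (v) Jacobi's theorem on complementary minors together with the perspective-function identity. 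These are essentially the ideas behind Ball's proof, so each route is legitimate --- the citation is shorter, your version is self-contained and exposes why the $\det F$ prefactor is exactly what makes (iii) work. One small correction to your closing remark on (iv): what guarantees global invertibility of $\varphi$ is not a smallness requirement but the Meisters--Olech/Ball global invertibility theorem, which applies once $\varphi\in C^1(\overline\Omega)$, $\det\nabla\varphi>0$, and $\varphi|_{\partial\Omega}$ is injective (here affine); this is precisely what the $C_0^\infty$ restriction buys and is the reason for the paper's footnote that the equivalence is only known when non-smooth variations are excluded.
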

\begin{proof}
	The equality
	\[
		W^{\#\#}(F) = \det(F)\cdot W^\#(F\inv) = \det(F)\cdot [\det(F\inv)\cdot W((F\inv)\inv)] = W(F)
	\]
	directly establishes i). Now, if $W_1\leq W_2$, i.e.\ $W_1(F)\leq W_2(F)$ for all $F\in\GLpn$, then
	\[
		W_1^\#(F) = \det(F)\cdot W_1(F\inv) \leq \det(F)\cdot W_2(F\inv) = W_2^\#(F)
	\]
	which, combined with i), establishes ii). Finally, the invariance of rank-one convexity, quasiconvexity\footnotemark[\value{footnote}] and polyconvexity under the Shield transformation are well known \cite[Theorem 2.6]{ball1977constitutive}, and the equivalence can thus be inferred from i).
	\footnotetext{To the best of our knowledge, the equivalence between the quasiconvexity of $W$ and $W^\#$ has only been shown if non-smooth variations $\vartheta\not\in C_0^\infty(\Omega)$ are excluded.}
\end{proof}
Note that (classical) convexity of $W$ is not equivalent to the convexity of $W^\#$ for dimension $n\geq 2$; for example, the constant mapping $W\equiv1$ is convex, whereas $W^\#(F)=\det F$ is not. Furthermore, in the one-dimensional case, the invariance of the convexity properties can be expressed as the equivalence
\[
	f\col\Rp\to\R\quad\text{is convex}\qquad\iff\qquad f^\#\col\Rp\to\R\,,\quad f^\#(t) = t\cdot f\left(\frac1t\right)\quad \text{is convex}\,,
\]
which is related to the study of so-called \emph{reciprocally convex functions} \cite[Lemma 2.2]{merkle2004reciprocally} and, for \mbox{$f\in C^2(\Rp)$}, follows directly from the equality
\[
	\frac{\mathrm{d}^2}{\mathrm{d}t^2}\left[t\cdot f\left(\frac1t\right)\right]	= \ddt\left[ f\left(\frac1t\right) - \frac1t\.f'\left(\frac1t\right) \right]
	= \frac{1}{t^3}\.f''\left(\frac1t\right),\qquad t>0\,.
\]

Applying now the Shield transformation, we show a surprising connection between our candidate $\Wmp(F)$ and the work of Iwaniec in the field of complex analysis and the so-called Burkholder functional. Using the connection $\C^2\cong\R^{2\times2}$ we arrive at a much more general settings of functions $W\col\R^{2\times2}\to\R$ without a volumetric-isochoric split defined on the linear space $\R^{2\times2}$ instead $\GLp(2)$.
%
%
%
%%%%%%%%%%%%%%%%%%%%%%%%%%%%%%%%%%
\subsection{The Burkholder functional}
In the following, we sketch an alternative approach to obtain the energy candidate
\begin{align}
	\Wmp(F)=\frac{\lambdamax}{\lambdamin}-\log\frac{\lambdamax}{\lambdamin}+\log\det F=\frac{\lambdamax}{\lambdamin}+ 2\.\log\lambdamin\,.
\end{align}
\begin{theorem}[Proposition 5.1 \cite{iwaniec2002nonlinear,iwaniec1999nonlinear}]\label{theorem:burkholder}
	The function
	\setlength{\belowdisplayskip}{5pt}
	\begin{align}
		W_{\mathrm{IW}}^-\col\Rnn\to\R\,,\qquad W_{\mathrm{IW}}^-(F)=\underbrace{\abs{1-\frac{n}{p}}}_{\equalscolon c}\,\opnorm{F}^p-\opnorm{F}^{p-n}\det F
	\end{align}
	is rank-one convex for all $p\geq\frac{n}{2}$. Furthermore, the factor $c=\abs{1-\frac{n}{p}}$ is the smallest possible constant for which this statement is true.
\end{theorem}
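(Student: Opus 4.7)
}
The strategy is to verify the one-dimensional characterization of rank-one convexity: $W_{\mathrm{IW}}^-$ is rank-one convex iff for every $F_0\in\Rnn$ and every rank-one matrix $\xi\otimes\eta$, the map $t\mapsto W_{\mathrm{IW}}^-(F_0+t\,\xi\otimes\eta)$ is convex on $\R$. Since $W_{\mathrm{IW}}^-$ is left/right $\mathrm{O}(n)$-invariant in a compatible way (both $\opnorm{\,\cdot\,}$ and $|\det|$ are isotropic, and the sign of $\det$ transforms consistently under orientation-reversing pairs on both sides), I would first exploit this invariance to put $F_0$ in diagonal form with nonnegative entries and simultaneously normalize $\xi\otimes\eta$ to lie in a single $2\times2$ coordinate block. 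This reduces the problem to an essentially planar computation embedded in $\Rnn$, with the remaining diagonal entries of $F_0$ entering only through $\det F$.

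Next, I would split into the subcases $p\geq n$ (so $c=1-n/p\geq 0$) and $n/2\leq p<n$ (so $c=n/p-1>0$), so that the relative sign of the two homogeneous pieces is manageable. In each case I use the singular-value representation $W_{\mathrm{IW}}^-(F)=c\,\lambda_{\max}(F)^p-\lambda_{\max}(F)^{p-n}\det F$. Whenever $F_0$ has a simple largest singular value, $\lambda_{\max}(F(t))$ is smooth in $t$ near $0$, and I can differentiate twice explicitly; the coinciding singular-value case follows by continuity (or by using one-sided derivatives, since rank-one convexity is equivalent to convexity along affine lines, not $C^2$).

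The main obstacle is the algebraic step: after reduction, the Legendre-Hadamard-type inequality becomes a polynomial (in the entries of the $2\times2$ block and the components of $\xi,\eta$) times $\opnorm{F_0}^{p-2n}$ or a similar factor, and one must show non-negativity globally. The natural path is to introduce the two dimensionless ratios $\sigma=\lambda_{\min}/\lambda_{\max}\in[0,1]$ and a single angular parameter controlling how $\xi\otimes\eta$ sits against the singular-value basis, reducing the problem to a two-variable inequality whose extremum can be located by elementary calculus. The threshold $p=n/2$ and the constant $c=|1-n/p|$ should emerge as the exact balance at which this extremum is zero: for $p<n/2$ no finite $c$ suffices, while for $p\geq n/2$ exactly $c=|1-n/p|$ makes the minimum vanish.

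For the sharpness claim, the previous minimization automatically identifies the extremal pair $(F_0,\xi\otimes\eta)$ along which $\frac{d^2}{dt^2}W_{\mathrm{IW}}^-(F_0+t\,\xi\otimes\eta)\bigr|_{t=0}=0$. Replacing $c$ by any $c'<c$ then produces a strictly negative value of this second derivative at that specific configuration, violating rank-one convexity. Thus the same extremal line simultaneously certifies both rank-one convexity at $c=|1-n/p|$ and the optimality of the constant, closing the theorem.
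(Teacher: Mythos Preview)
The paper does not itself prove Theorem \ref{theorem:burkholder} in full generality; it is quoted from Iwaniec. What the paper does prove is the planar special case $n=2$, $p\geq 2$ (Lemma \ref{lemma:BurkholderRankOneConvex}), and the method there is entirely different from yours: one identifies $\R^{2\times2}\cong\C^2$ via $F\leftrightarrow(z,w)$, rewrites $B_p(F(z,w))=-L_p(z,w)$ with $L_p(z,w)=(\abs z-(p-1)\abs w)(\abs z+\abs w)^{p-1}$, checks that rank-one matrices correspond exactly to pairs $(h,k)$ with $\abs h=\abs k$, and then invokes Burkholder's result that $t\mapsto L_p(z+th,w+tk)$ is concave whenever $\abs k\leq\abs h$. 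No second-derivative algebra on singular values is carried out.

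Your proposed reduction contains a genuine gap in dimension $n\geq 3$. You claim that after diagonalizing $F_0$ one can simultaneously normalize $\xi\otimes\eta$ to lie in a single $2\times 2$ coordinate block. This is false in general: once $F_0=\operatorname{diag}(\sigma_1,\dots,\sigma_n)$ with distinct $\sigma_i$, the residual symmetry of $W_{\mathrm{IW}}^-$ (which is only $\SO(n)\times\SO(n)$-invariant, not $\OO(n)\times\OO(n)$-invariant, because of the $\det F$ term; the paper explicitly notes it is merely hemitropic) reduces to signed coordinate permutations, and a generic $\xi\otimes\eta$ with all components of $\xi$ and $\eta$ nonzero cannot be confined to any $2\times 2$ subblock by such maps. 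The largest singular value of $F_0+t\,\xi\otimes\eta$ then genuinely depends on all $2n$ components of $\xi$ and $\eta$, so the problem does not collapse to the two-variable inequality you describe. Iwaniec's argument for general $n$ does not rely on such a block reduction.

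For $n=2$ your outline is not obviously wrong, since the problem is already planar, but even there you have left the decisive step (global non-negativity of the resulting two-parameter expression, and the handling of the non-differentiability of $\opnorm{\cdot}$ at repeated singular values) as a promise rather than an argument; the paper's complex-variable route via $L_p$ bypasses that algebra entirely.
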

Here $\opnorm{F}\colonequals\sup_{\norm{\xi}=1}\norm{F\.\xi}_{\R^2}=\lmax$ denotes the operator norm (i.e.\ the largest singular value) of $F$. The function $W_{\rm IW}^-$ is homogeneous of degreee $p$, i.e.\ $W(\alpha\.F)=\alpha^p\.W(F)$, but it is a non-polynomial function due to its dependence\footnote{Note that $W_{\rm IW}^-(F)$ is not differentiable at $F=\lambda\.R$, $\lambda\in(0,\infty)$, $R\in\OO(2)$ due to the operator-norm at non-single singular values.} on $\opnorm{F}$. Moreover, the function is not isotropic but only hemitropic\footnote{Functions $W\col\Rnn\to\R$ that are right-invariant w.r.t\ the proper orthogonal group $\SO(n)$ are called \emph{hemitropic}, while functions that are right-invariance w.r.t.\ the larger group of orthogonal matrices $\On$ are called \emph{isotropic}.} (not right-$\OO(2)$-invariant).

We continue with the planar case ($n=2$) and restrict the investigation to $p\geq 2$:
\begin{align}
	W_{\mathrm{IW}}^-(F)&=\abs{\underbrace{1-\frac{2}{p}}_{>0}}\,\opnorm{F}^p-\opnorm{F}^{p-2}\det F=\left[-\det F+\left(1-\frac{2}{p}\right)\opnorm{F}^2\right]\opnorm{F}^{p-2}\,.
\end{align}
Let us define
\begin{align}
	 B_p(F)\colonequals\frac{p}{2}W_{\rm IW}^-(F)&=-\left[\frac{p}{2}\.\det F+\left(1-\frac{p}{2}\right)\opnorm{F}^2\right]\opnorm{F}^{p-2}\,,
\end{align}
where
\begin{align}
	-B_p(F)&=\left(\frac{p}{2}\.\det F+\left(1-\frac{p}{2}\right)\lmax^2\right)\lmax^{p-2}=\frac{p}{2}\.\det F\.\lmax^{p-2}+\frac{2-p}{2}\.\lmax^p\label{eq:burkhoder}\\
	&=\frac{p}{2}\.\det F\.\left(\frac{\norm{F}^2+\sqrt{\norm{F}^4-4\.(\det F)^2}}{2}\right)^\frac{p-2}{2}+\frac{2-p}{2}\.\left(\frac{\norm{F}^2+\sqrt{\norm{F}^4-4\.(\det F)^2}}{2}\right)^\frac{p}{2}\notag
\end{align} is also known as the \emph{Burkholder functional} \cite{burkholder1988sharp,astala2012quasiconformal}. It is already established that the function $B_p$ is rank-one convex and satisfies a quasiconvexity property\footnote{In the literature, $B_p(F)$ is usually defined with a reversed sign and it is investigated with respect to rank-one concavity and quasiconcavity instead \cite{astala2012quasiconformal}.} at the identity $F=\id$ in the case of smooth non-positive integrands \cite[Theorem 1.2]{astala2012quasiconformal}, namely if $\vartheta\subset C_0^\infty(\Omega;\R^2)$ and $B_p(\id+\nabla\vartheta(x))\leq0$ for all $x\in\Omega$, then
\begin{equation}
	\int_\Omega B_p(\id+\nabla\vartheta(x))\,\dx\geq\int_\Omega B_p(\id)\,\dx=-\abs\Omega\,,\qquad p\geq2\,.
\end{equation}
\begin{lemma}[\cite{burkholder1988sharp}]\label{lemma:BurkholderRankOneConvex}
	The function $B_p\col\R^{2\times2}\to\R$ with
	\[
		 B_p(F)=-\left[\frac{p}{2}\.\det F+\left(1-\frac{p}{2}\right)\opnorm{F}^2\right]\opnorm{F}^{p-2}
	\]	
	is rank-one convex.
\end{lemma}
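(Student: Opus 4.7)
The plan is to obtain the Lemma as an immediate corollary of Theorem \ref{theorem:burkholder}. Indeed, in the planar case $n=2$ with $p\geq 2$ the absolute value collapses, $|1-\tfrac{2}{p}|=1-\tfrac{2}{p}\geq 0$, so
\[
	W_{\mathrm{IW}}^-(F) = \left(1-\tfrac{2}{p}\right)\opnorm{F}^p - \opnorm{F}^{p-2}\det F.
\]
A direct comparison with the expression for $B_p$ displayed immediately before the Lemma yields $B_p(F) = \tfrac{p}{2}\, W_{\mathrm{IW}}^-(F)$, which is exactly how $B_p$ was introduced. Theorem \ref{theorem:burkholder} asserts rank-one convexity of $W_{\mathrm{IW}}^-$ on $\R^{n\times n}$ for all $p\geq n/2$, hence in particular for $p\geq 2$ in dimension $n=2$; since rank-one convexity is preserved under multiplication by the positive scalar $\tfrac{p}{2}>0$, the conclusion follows.

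The only genuine technicality, which is precisely what makes Theorem \ref{theorem:burkholder} non-trivial, is the non-smoothness of $\opnorm{F}=\lambdamax$ on the locus where the two singular values coincide: the pointwise Legendre-Hadamard criterion \eqref{LegendreHadamardEllipticity} is not directly applicable there. In the present proof, this obstacle is entirely absorbed into the statement of Theorem \ref{theorem:burkholder}. Were a self-contained argument required, the main difficulty would be handling this coincidence set; one would most naturally verify the rank-one convexity inequality \eqref{eq:rankOneConvexity} by a direct one-dimensional computation along each rank-one segment $t\mapsto B_p(F_0 + t\,\xi\otimes\eta)$, exploiting the closed-form expression
\[
	\opnorm{F}^2 = \tfrac{1}{2}\left(\norm{F}^2 + \sqrt{\norm{F}^4 - 4(\det F)^2}\right)
\]
from \eqref{eq:burkhoder}, or alternatively smoothly approximate the operator norm (e.g.\ by $\ell^q$-type surrogates of the singular values with $q\to\infty$), apply the Knowles--Sternberg criterion from Lemma \ref{lemma:knowlesSternberg} on the regular part, and pass to the limit. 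Since the cited Iwaniec result handles this issue, no further computation is needed here.
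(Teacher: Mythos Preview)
Your argument is correct: within the paper's logical structure, $B_p=\tfrac{p}{2}\,W_{\mathrm{IW}}^-$ by definition, and Theorem~\ref{theorem:burkholder} (stated without proof, with reference to Iwaniec) gives rank-one convexity of $W_{\mathrm{IW}}^-$ for $p\geq n/2$, so the Lemma follows for $p\geq 2$ by positive scaling.

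However, this is not the route the paper takes. The paper's proof proceeds via the isomorphism $\R^{2\times 2}\cong\C^2$ from \eqref{eq:complexIsomorphism}: it rewrites $B_p(F(z,w))=-L_p(z,w)$ with $L_p(z,w)=(|z|-(p-1)|w|)(|z|+|w|)^{p-1}$, checks by an explicit computation that rank-one perturbations $F\mapsto F+t\,\xi\otimes\eta$ correspond to complex perturbations $(z,w)\mapsto(z+th,w+tk)$ with $|h|=|k|$, and then invokes Burkholder's original one-dimensional concavity result that $t\mapsto L_p(z+th,w+tk)$ is concave whenever $|k|\leq|h|$. Your approach is formally shorter but somewhat circular in spirit, since Iwaniec's Theorem~\ref{theorem:burkholder} is itself an outgrowth of Burkholder's work; the paper's argument is closer to self-contained and, more importantly, makes the complex-analytic structure explicit --- which is precisely the point of Section~\ref{sec:Burkholder}. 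Your second paragraph sketching how one might handle the non-smoothness of $\opnorm{\cdot}$ via Knowles--Sternberg or approximation is reasonable commentary, but note that the paper's route sidesteps this entirely: the complex reformulation reduces everything to a scalar concavity statement where no singular-value ordering issues arise.
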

\begin{proof}
	For the convenience of the reader, we add the proof from \cite{burkholder1988sharp}: We use the connection to complex analysis $\R^{2\times2}\cong\C^2$ (cf.\ Appendix \ref{sec:complexAnalysis}) where we identify
	\begin{equation}
		F(z,w)\colonequals\matr{\Re z+\Re w & \Im w -\Im z\\ \Im z+\Im w & \Re z-\Re w}
	\end{equation}
	and transform the Burkholder function $B_p(F)$ to a function $L_p\col\C\times\C\to\R$ with
	\begin{equation}
		L_p(z,w)=\left(\abs z-(p-1)\abs w\right)\left(\abs z +\abs w\right)^{p-1}.
	\end{equation}
	Then $B_p(F(z,w))=-L_p(z,w)$ and, for rank-one matrices $\xi\otimes\eta$ with $\xi,\eta\in\R^2$, we find
	\begin{equation}
		B_p(F(z,w)+t\.\xi\otimes\eta)=-L_p(z+t\.h,w+t\.k)\,,
	\end{equation}
	where
	\begin{align*}
		\matr{\xi_1\eta_1 & \xi_1\eta_2\\ \xi_2\eta_1 & \xi_1\eta_2}=\matr{\Re h+\Re k & \Im k -\Im h\\ \Im h+\Im k & \Re h-\Re k}\quad\iff\quad\begin{matrix}
		\Re h=\frac12\left(\xi_1\eta_1+\xi_2\eta_2\right),&\Re k=\frac12\left(\xi_1\eta_1-\xi_2\eta_2\right),\\
		\Im h=\frac12\left(\xi_2\eta_1-\xi_1\eta_2\right),&\Im k=\frac12\left(\xi_1\eta_2+\xi_2\eta_1\right).
		\end{matrix}
	\end{align*}
	In addition, for rank-one matrices with this definition,
	\begin{align}
		4\.\abs h^2&=\abs{2\.\Re h}^2+\abs{2\.\Im h}^2=(\xi_1\eta_1+\xi_2\eta_2)^2+(\xi_2\eta_1-\xi_1\eta_2)^2\notag\\
		&=\xi_1^2\eta_1^2+2\.\xi_1\eta_1\xi_2\eta_2+\xi_2^2\eta_2^2+\xi_2^2\eta_1^2-2\.\xi_2\eta_1\xi_1\eta_2+\xi_1^2\eta_2^2=\xi_1^2\eta_1^2+\xi_1^2\eta_2^2+\xi_2^2\eta_1^2+\xi_2^2\eta_2^2\\
		&=\xi_1^2\eta_1^2-2\.\xi_1\eta_1\xi_2\eta_2+\xi_2^2\eta_2^2+\xi_1^2\eta_2^2+2\.\xi_1\eta_2\xi_2\eta_1+\xi_2^2\eta_1^2=(\xi_1\eta_1-\xi_2\eta_2)^2+(\xi_1\eta_2+\xi_2\eta_1)^2\notag\\
		&=\abs{2\.\Re k}^2+\abs{2\.\Im k}^2=4\.\abs k^2\qquad\implies\qquad\abs h=\abs k.\notag
	\end{align}
	Now the rank-one convexity of $B_p(F)$ follows from the concavity of $-L_p(z+t\.h,w+t\.k)$ (cf.\ \cite[eq.\ (1.14)]{burkholder1988sharp}), i.e.\ the observation that for all $z,w,h,k\in\C$ with $\abs k\leq\abs h$ the mapping
	\begin{equation}
		t\mapsto-L_p(z+t\.h,w+t\.k)
	\end{equation}
	is concave on $\R$.
\end{proof}
The function of Burkholder plays an important role in the martingale study of the Beurling-Ahlfors operator (cf.\ Appendix \ref{sec:complexAnalysis}) where several open questions (in particular an important conjecture of Iwaniec \cite{iwaniec1993quasiregular}) could be answered by showing that $B_p(F)$ is quasiconvex.

In case of $p=2$ the energy function $B_2(F)$ is a \emph{Null-Lagrangian} since
\begin{equation}
	 B_2(F)=-\left[\det F+\left(1-1\right)\opnorm{F}^2\right]\opnorm{F}^{2-2}=-\det F\,.
\end{equation}
Therefore, the expression $B_p(F)- B_2(F)$ has the same convexity behavior as $B_p$ itself and we can formally compute the derivative with respect to $p$. We define
\begin{align}
	 B_\star(F)&\colonequals\lim_{p\searrow 2}\frac{ B_p(F)- B_2(F)}{p-2}=\ddp B_p(F)|_{p=2}=\ddp\left[\left(-\frac{p}{2}\det F+\left(\frac{p}{2}-1\right)\opnorm{F}^2\right)e^{(p-2)\.\log\opnorm{F}}\right]_{p=2}\notag\\
	&=-\frac{1}{2}\.\det F+\frac{1}{2}\.\opnorm{F}^2+\left(-\det F+0\right)\log\opnorm{F}=-\frac{1}{2}\left(1+2\.\log\opnorm{F}\right)\det F+\frac{1}{2}\.\opnorm{F}^2\notag\\
	&=-\frac{1}{2}\left(1+\log\big(\opnorm{F}^2\big)\right)\det F+\frac{1}{2}\.\opnorm{F}^2.
\end{align}
The function $B_\star$ remains globally rank-one convex and quasiconvex at the identity.\footnote{For the quasiconvexity of $B_\star$ we use that $\lim_{k\to\infty}\int_\Omega W_k(\nabla\varphi(x))\,\dx=\int_\Omega\lim_{k\to\infty}W_k(\nabla\varphi(x))\,\dx=\int_\Omega B_\star(\nabla\varphi(x))\,\dx$ with $W_k(\nabla\varphi(x))\colonequals-\frac1k\bigl(B_{2+\frac1k}(\nabla\varphi(x))- B_2(\nabla\varphi(x))\bigr)$ holds because for every $\nabla\varphi\in\R^{2\times 2}$ the expression $\abs{W_k(\nabla\varphi(x))}$ is bounded for all $x\in\R^2$ and $k\in\N$.} If we restrict $B_\star$ to $\GLp(2)$, we may now calculate its Shield transformation 
\begin{align}
	B^\#_\star(F)&=\det F\cdot B_\star(F\inv)=\det F\left[-\frac{1}{2}\left(1+\log\big(\opnorm{F\inv}^2\big)\right)\det(F\inv)+\frac{1}{2}\.\opnorm{F\inv}^2\right]\notag\\
	&=-\frac{1}{2}-\log\opnorm{F\inv}+\frac{1}{2}\.\det F\.\opnorm{F\inv}^2\notag\\
	&=-\frac{1}{2}-\log\left(\frac{\opnorm{F}}{\det F}\right)+\frac{1}{2}\.\det F\.\frac{\opnorm{F}^2}{(\det F)^2}\label{eq:shieldBurkhoder}\\
	&=-\frac{1}{2}-\log\opnorm{F}+\log\det F+\frac{1}{2}\.\frac{\opnorm{F}^2}{\det F}\,,\notag
\end{align}
where we have used that in the two-dimensional case,
\begin{align*}
	\opnorm{F\inv}=\frac{1}{\lmin}=\frac{\lmax}{\lmin\lmax}=\frac{\opnorm{F}}{\det F}\,.
\end{align*}
Omitting the additive constant $-\frac{1}{2}$ in equation \eqref{eq:shieldBurkhoder}, which has no effect on the convexity behavior, we arrive at
\begin{align}
	B^\#_\star(F)+\frac{1}{2}&=\frac{1}{2}\.\frac{\opnorm{F}^2}{\det F}-\log\opnorm{F}+\log\det F\\
	&=\frac{1}{2}\.\frac{\lmax^2}{\lmax\lmin}-\log\lmax+\log(\lmax\lmin)=\frac{1}{2}\.\frac{\lmax}{\lmin}+\log\lmin=\frac{1}{2}\Wmp(F)\,.\notag
\end{align}
In particular, Lemma \ref{lemma:shieldTransformationProperties} and Theorem \ref{theorem:burkholder} also imply that $\Wmp(F)$ is rank-one convex.

An overview of the relation between our energy candidate $\Wmp(F)$ and the Burkholder functional $B_p(F)$ is visualizied in Figure \ref{fig:burkholderConnection}. Again, while we try to find an example to settle Morrey's question, i.e.\ show that our energy function $\Wmp(F)$ is \textbf{not} quasiconvex, in the topic of martingale study, it is sought to prove that $B_p(F)$ is quasiconvex to positively affirm some estimates in function theory, see Appendix \ref{sec:complexAnalysis} for more information.
\begin{figure}[h!]
	\centering
	\begin{tikzpicture}[->,>=stealth',implies/.style={thick,double,double equal sign distance,-implies},iff/.style={thick,double,double equal sign distance,implies-implies}]
	 		\node[state] (1){
	 			\begin{tabular}{c}
	 				Iwaniec/Burkholder approach:\\
	 				$\displaystyle B_p\col\R^{2\times 2}\to\R$
	 			\end{tabular}
	 		};
		 	\node[state, right of=1, node distance=9cm] (2){
	  			\begin{tabular}{c}
	 				 Volumetric-isochoric split approach:\\
	 				$\displaystyle W\col\GLp(2)\to\R\,,\; W(F)=h\Bigl(\frac{\lambda_1}{\lambda_2}\Bigr)+f(\det F)$
	 			\end{tabular}
	 		};
	 		\node[state, below of=1,node distance =3cm](3){
	 			\begin{tabular}{c}
	 				$\displaystyle B_p(F)=-\left[\frac{p}{2}\.\det F+\left(1-\frac{p}{2}\right)\opnorm{F}^2\right]\opnorm{F}^{p-2}$\\
	 				the \enquote{heart of the matter} \cite{burkholder1988sharp}
	 			\end{tabular}
	 		};
	 		\node[state, below of=2, node distance=3cm, align=center] (4){
				\begin{tabular}{c}
	 				$\displaystyle \Wmp(F)=\frac{\lmax}{\lmin}-\log\frac{\lmax}{\lmin}+\log(\det F)$\\
	 				least rank-one convex candidate
	 			\end{tabular}				  			
	 		};
	 		\node[state, below of=3,node distance =3cm](5){
	 			\begin{tabular}{c}
	 				$\displaystyle B_\star(F)\colonequals\lim_{p\searrow 2}\frac{ B_p(F)- B_2(F)}{p-2}=\ddp B_p(F)\bigr|_{p=2}$\\
	 				\hspace{0.9cm}$\displaystyle =-\frac{1}{2}\left(1+\log\big(\opnorm{F}^2\big)\right)\det F+\frac{1}{2}\.\opnorm{F}^2$
	 			\end{tabular}
	 		};
	 		\node[state, below of=4, node distance=3cm, align=center] (6){
				$\displaystyle B_\star^\#(F)\colonequals\frac{1}{2}\bigl(\Wmp(F)-1\bigr)$				  			
	 		};
	 		\path[ultra thick]
	 			(1) edge (3)
	 			(2) edge (4)
	 			(3) edge node[midway,right]{$B_2(F)$ is Null-Lagrangian} (5)
	 			(4) edge (6)
	 			(5) edge (6)
	 			(6) edge node[midway,above]{Shield} node[midway,below]{transformation} (5)
	 		;
	\end{tikzpicture}
	\caption{Connection between the Burkholder functional $B_p(F)$ and our energy candidate $\Wmp(F)$ as a chance to check Morrey's conjecture, where $\opnorm{F}=\sup_{\norm{\xi}=1}\norm{F\.\xi}_{\R^2}=\lmax$ denotes the operator norm (i.e.\ the largest singular value) of $F$.}
	\label{fig:burkholderConnection}
\end{figure}
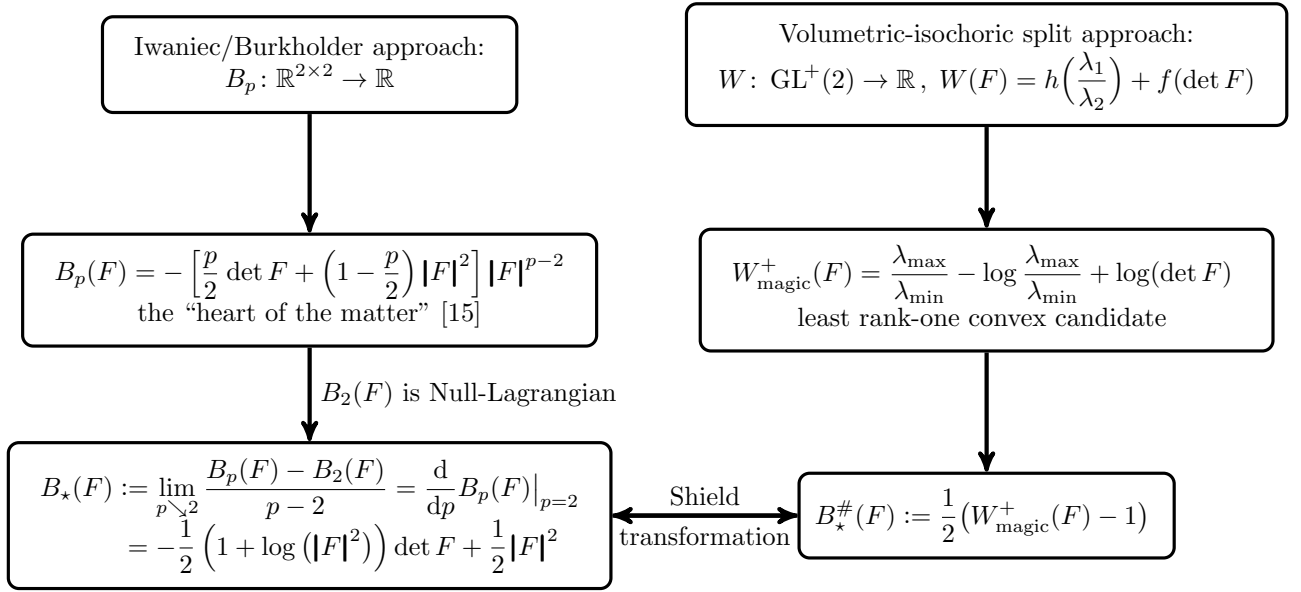
%
%
%
%
%
%%%%%%%%%%%%%%%%%%%%%%%%%%%%%%%%%%%%%%%%%%%%%%%%%%%%%%%%%%%%%%%%%
\section{Radially symmetric deformations}
Radial deformations play an important role for planar quasiconvexity. For isotropic materials it seems reasonable that any global minimizer under radial boundary conditions is radial, but such a symmetry statement may cease to hold for non-convex nonlinear problems\footnote{Let $\Omega$ be the unit ball in $\R^n$. Then the minimization problem of the non-convex functional
\begin{equation}
	R(v)=\int_\Omega\frac{1}{1+\norm{\nabla v(x)}^2}\,\dx\label{eq:nonRadialMinimizer}
\end{equation}
has neither a radial nor a unique solution in the class $C_M\colonequals\left\{v\in W^{1,\infty}_{\rm loc}(\Omega;\R)\;|\;0\leq v(x)\leq M\;\forall\;x\in\Omega\,,\; v\text{ is concave}\right\}$.} \cite{brock1996symmetry}. A deformation is radial if there exists  a function $v\col[0,\infty)\to\R$ such that
\begin{equation}
	\varphi(x)=v(\norm x)\.\frac{x}{\norm x}\qquad\text{with}\quad v(0)=0\,.\label{eq:radialMappings}
\end{equation}
We compute
\begin{align}
	\nabla\varphi(x)&=\frac{v(\norm{x})}{\norm{x}}\.\id_2+\left(\frac{v'(\norm{x})\.\frac{x}{\norm x}\.\norm x-v(\norm x)\.\frac{x}{\norm x}}{\norm x^2}\right)\otimes x\notag\\
	&=\frac{v(\norm{x})}{\norm{x}}\.\id_2+\left(v'(\norm{x})-\frac{v(\norm{x})}{\norm{x}}\right)\frac{1}{{\norm{x}}^2}\.x\otimes x
\end{align}
and identify the corresponding singular values
\begin{align}
	\lambda_1=v'(\norm{x})\,,\qquad\lambda_2=\frac{v(\norm{x})}{\norm{x}}\,.
\end{align}
\begin{lemma}[\cite{ball1987does}]\label{lem:radialSeparateConvex}
	Let $W\col\GLp(2)\to\R$ be an isotropic, objective and separately convex energy function and $g(\lambda_1,\lambda_2)=W(F)$ with $\lambda_1,\lambda_2$ as the singular values of $F$. If $F_0\in\GLp(2)$ is a homogeneous radial mapping, i.e.\ $F_0=\lambda\.\id$ with $\lambda\in\Rp$, then
	\begin{equation}
		\int_{B_1(0)}W(\nabla\varphi(x))\,\dx\geq\int_{B_1(0)}W(F_0)\,\dx=\pi\.g(\lambda,\lambda)
	\end{equation}
	for all radial deformations $\varphi$ of the form \eqref{eq:radialMappings} with $\varphi|_{\partial B_1(0)}(x)=F_0\.x$.
\end{lemma}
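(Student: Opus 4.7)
My strategy is to reduce the 2D integral inequality to a 1D identity via radial symmetry, then exploit separate convexity of $g$ at a diagonal point. Polar coordinates together with the fact that $\nabla\varphi(x)$ has constant singular values $\bigl(v'(r),\tfrac{v(r)}{r}\bigr)$ on each circle $\{\norm{x}=r\}$ yield
\[
	\int_{B_1(0)}W(\nabla\varphi(x))\,\dx \;=\; 2\pi\int_0^1 r\,g\bigl(v'(r),\tfrac{v(r)}{r}\bigr)\,dr,
\]
so it suffices to show $\int_0^1 r\,g(v'(r),v(r)/r)\,dr\geq\tfrac12\.g(\lambda,\lambda)$ subject to $v(0)=0$ (from the definition of a radial deformation) and $v(1)=\lambda$ (from the boundary condition $\varphi(x)=F_0\,x=\lambda\,x$ for $\norm{x}=1$).

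Setting $u(r):=v(r)/r$ so that $v'(r)-u(r)=r\.u'(r)$, I would apply separate convexity of $x\mapsto g(x,u(r))$ at the diagonal point $x=u(r)$ to obtain
\[
	g(v'(r),u(r))\;\geq\; g(u(r),u(r)) + g_1(u(r),u(r))\,(v'(r)-u(r)).
\]
Since isotropy $g(x,y)=g(y,x)$ forces $g_1(s,s)=g_2(s,s)$, differentiating $\phi(s):=g(s,s)$ gives $g_1(s,s)=\tfrac12\phi'(s)$. Multiplying the bound above by $r$ therefore turns it into
\[
	r\,g(v'(r),u(r))\;\geq\;r\,\phi(u(r))+\tfrac{r^2}{2}\,\tfrac{d}{dr}\phi(u(r)).
\]

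Integrating over $[0,1]$ and performing integration by parts on the second term, the volume contribution $-\int_0^1 r\,\phi(u(r))\,dr$ cancels the first summand, leaving only the boundary contribution $\tfrac12\phi(u(1))=\tfrac12\.g(\lambda,\lambda)$; the boundary value at $r=0$ vanishes since the Lipschitz regularity of $\varphi$ keeps $u(r)=v(r)/r$ bounded near the origin. Multiplying by $2\pi$ then gives the desired inequality.

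The main technical obstacle is that separate convexity alone only yields a.e.\ differentiability of $g$, so $g_1(u,u)$ must be interpreted as a measurable selection from the subdifferential of $x\mapsto g(x,u)$ at $x=u$; the identity $g_1(s,s)=\tfrac12\phi'(s)$ still holds almost everywhere, and full rigor can be restored either by mollifying $g$ through smooth separately convex approximations, or by working directly with one-sided derivatives along the diagonal and invoking absolute continuity of $r\mapsto\phi(u(r))$ for the integration by parts.
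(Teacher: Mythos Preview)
Your argument is correct and essentially identical to the paper's proof: both reduce to polar coordinates, apply separate convexity of $g$ in the first variable at the diagonal point $\bigl(\tfrac{v}{r},\tfrac{v}{r}\bigr)$, and use the isotropy identity $g_x(s,s)=g_y(s,s)$. The only cosmetic difference is that the paper recognizes the resulting lower bound $r\,g\bigl(\tfrac{v}{r},\tfrac{v}{r}\bigr)+r\bigl(v'-\tfrac{v}{r}\bigr)g_x\bigl(\tfrac{v}{r},\tfrac{v}{r}\bigr)$ directly as the total derivative $\tfrac{d}{dr}\bigl[\tfrac{r^2}{2}\,g\bigl(\tfrac{v}{r},\tfrac{v}{r}\bigr)\bigr]$ and integrates, whereas you split this as $r\,\phi(u)+\tfrac{r^2}{2}\,\tfrac{d}{dr}\phi(u)$ and invoke integration by parts---the product rule read two ways. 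Your remark on the regularity of $g$ is a point the paper simply passes over.
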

\begin{proof}
	For the homogeneous radial deformation gradient of the form $F_0=\lambda\.\id$ we compute
	\begin{align}
		\int_{B_1(0)}W(\lambda\.\id)\.\dx_1\dx_2=2\pi\.\int_0^1 r\. g\left(\lambda,\lambda\right)\dr=2\pi\. g\left(\lambda,\lambda\right)\left[\frac{r^2}{2}\right]_0^1=\pi\. g(\lambda,\lambda)\,.
	\end{align}
	For an arbitrary radial deformation $\varphi(x)=v(\norm x)\.\frac{x}{\norm x}=g\left(v'(\norm{x}),\frac{v(\norm{x})}{\norm{x}}\right)$ we write
	\[
		\int_{B_1(0)}W(\nabla\varphi(x_1,x_2))\,\dx_1\dx_2=\int_{B_1(0)}g(\lambda_1,\lambda_2)\,\dx_1\dx_2=\int_0^{2\pi}\int_0^1 g\left(v',\frac{v}{r}\right)r\.\dr\.\dth=2\pi\.\int_0^1 r\. g\left(v',\frac{v}{r}\right)\dr\,.
	\]
	Separate convexity of $g$ in the first argument ensures
	\[
		g(x_2,y)\geq g(x_1,y)+(x_2-x_1)\.g_x(x_1,y)\qquad \text{for all}\quad x_1,x_2,y\in\R\,.
	\]
	Setting symbolically $x_1=y=\frac{v}{r}$ and $x_2=v'$ implies
	\begin{align}
		 g\left(v',\frac{v}{r}\right)&\geq  g\left(\frac{v}{r},\frac{v}{r}\right)+\left(v'-\frac{v}{r}\right) g_x\left(\frac{v}{r},\frac{v}{r}\right).
	\end{align}
	Thus we compute
		\begin{align}
		\int_{B_1(0)}W(\nabla\varphi(x_1,x_2))\,\dx&=2\pi\.\int_0^1 r\. g\left(v',\frac{v}{r}\right)\dr\geq 2\pi\.\int_0^1 r\left[ g\left(\frac{v}{r},\frac{v}{r}\right)+\left(v'-\frac{v}{r}\right)g_x\left(\frac{v}{r},\frac{v}{r}\right)\right]\dr\\
		&\overset{\mathclap{(*)}}{=}2\pi\.\int_0^1 \ddr\left[\frac{r^2}{2}\. g\left(\frac{v(r)}{r},\frac{v(r)}{r}\right)\right]\dr=\pi\left[r^2\. g\left(\frac{v(r)}{r},\frac{v(r)}{r}\right)\right]_0^1=\pi\. g(\lambda,\lambda)\notag
	\end{align}
	with $\lambda=v(1)$ and $v(0)=0$. Equality $(*)$ follows from
	\begin{align}
		\ddr\left[\frac{r^2}{2}\. g\left(\frac{v(r)}{r},\frac{v(r)}{r}\right)\right]&=r\. g\left(\frac{v(r)}{r},\frac{v(r)}{r}\right)+r^2\.g_x\left(\frac{v(r)}{r},\frac{v(r)}{r}\right)\left(\frac{v'(r)}{r}-\frac{v(r)}{r^2}\right)\notag\\
		&=r\left[ g\left(\frac{v(r)}{r},\frac{v(r)}{r}\right)+\left(v'(r)-\frac{v(r)}{r}\right)g_x\left(\frac{v(r)}{r},\frac{v(r)}{r}\right)\right],
	\end{align}
	where we used the equality $g_x(\lambda,\lambda)=g_y(\lambda,\lambda)$; note that $g(\lambda_1,\lambda_2)=g(\lambda_2,\lambda_1)$ due to the isotropy of $W$. Overall,
	\[
		\int_{B_1(0)}W(\nabla\varphi(x_1,x_2))\,\dx\geq\pi\. g(\lambda,\lambda)=\int_{B_1(0)}W(F_0)\,\dx\,.\qedhere
	\]
\end{proof}
%
%
%
%%%%%%%%%%%%%%%%%%%%%%%%%%%%%%%%%%
\subsection{Expanding and contracting deformations}\label{sec:contractingDeformations}
In general, non-trivial radial deformations do not have the same energy value as the homogeneous deformation $\varphi_0(x)=\lambda\.x$, especially if the energy is strictly separately convex. However, there exist non-trivial examples for the Burkholder energy $B_p(F)$ as well as for the energy $\Wmp(F)$ for which equality holds. In order to construct such deformations, we first consider the general class of radial functions with $v(0)=0$ and $v(R)=R$ which keep the center point and exterior radius constant. More specifically, we consider the subclasses of expanding and contracting functions
\begin{align}
	\mathcal{V}_R\colonequals&\left\{v\in C^1([0,R])\,\big|\,v(0)=0\,,\;v(R)=R\,,\; \frac{v(r)}{r}\geq v'(r)\geq 0\quad\forall\; r\in[0,R]\right\},\tag{expanding}\\
	\mathcal{V}_R\inv=&\left\{v\in C^1([0,R])\,\big|\,v(0)=0\,,\;v(R)=R\,,\; v'(r)\geq \frac{v(r)}{r}\geq 0\quad\forall\; r\in[0,R]\right\},\tag{contracting}
\end{align}
for which the order of the singular values $\lmin$ and $\lmax$ remains fixed.\footnote{The condition $\frac{v(r)}{r}\geq v'(r)$ is not trivial and only allows radial functions which inflate all disks with center in zero, i.e.\ $v(r)\geq r$ for all $r\in[0,R]$. Note that if there exists a point $r_0$ with $v(r_0)<r_0$, then $v'(r_0)\leq\frac{v(r_0)}{r_0}<1$, which would imply that $v(r)<r$ for all $r\geq r_0$ and thus $v(R)<R$.} The class $\mathcal V_R$ of expanding functions can be described by radial functions for which the gradient of the tangent in a point $(r,v(r))$ is smaller then the gradient of the secant of the origin with $(r,v(r))$ for any $r\in[0,R]$ as shown in Figure \ref{fig:radialMappings}.

In contrast, the class $\mathcal V_R\inv$ of contracting functions contains any radial functions $v$ such that the gradient of the tangent in a point $(r,v(r))$ is larger than the gradient of the secant of the origin with $(r,v(r))$ for any $r\in[0,R]$ as shown in Figure \ref{fig:radialMappings}. The connection between $\mathcal{V}_R$ and $\mathcal V_R\inv$ is given by the inverse function theorem: For an arbitrary $v\in\mathcal V_R$ with $r\mapsto v(r)$ and its inverse function $v\inv\in\mathcal V_R\inv$ with $t\mapsto v\inv(t)$ we find
\begin{align}
	v\inv(t)=\frac{1}{v'\big(v\inv(t)\big)}\geq\frac{v\inv(t)}{t}\qquad\iff\qquad\frac{t}{v\inv(t)}\geq v'\big(v\inv(t)\big)\qquad\iff\qquad\frac{v(r)}{r}\geq v'(r)\,.
\end{align}
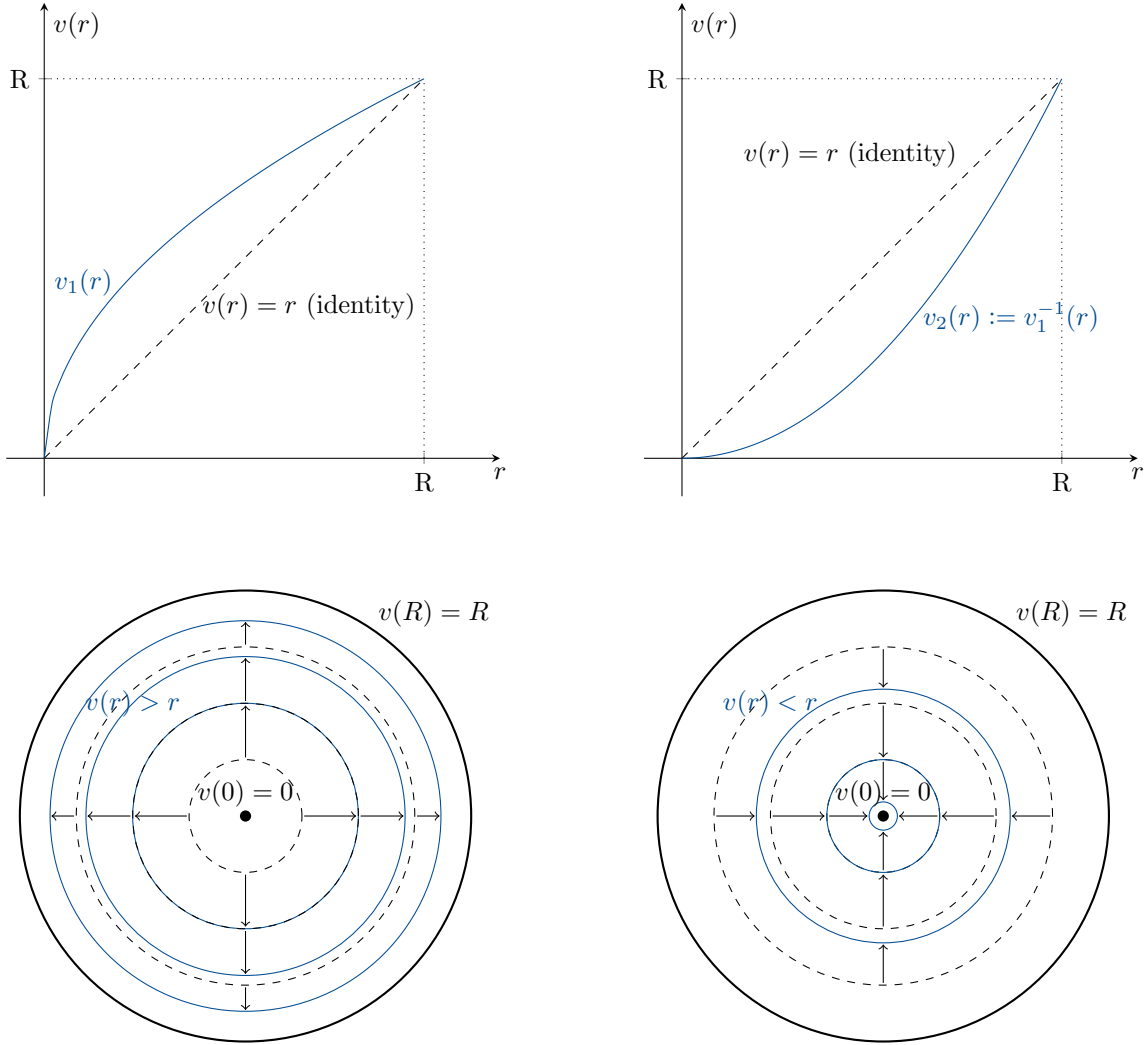
\begin{figure}[h!]
	\centering
	\begin{minipage}[t]{.49\linewidth}
	\centering
	\begin{tikzpicture}
  		\begin{axis}[
        axis x line=middle,axis y line=middle,
        x label style={at={(current axis.right of origin)},anchor=north, below},
        xlabel=$r$, ylabel=$v(r)$,
        xmin=-0.1, xmax=1.2,
        ymin=-0.1, ymax=1.2,
        width=1\linewidth,
        height=1\linewidth,
        ytick=\empty,
        xtick={1},
        xticklabels={R},
        ytick={1},
        yticklabels={R}
        ]
        \addplot[black, dotted] coordinates {(0, 1) (1, 1)};
        \addplot[black, dotted] coordinates {(1, 0) (1, 1)};
        \addplot[black, dashed][domain=0:1, samples=\sample]{x} node[pos=0.4,xshift=1.5cm] {$v(r)=r$ (identity)};
        \addplot[udedarkblue, smooth][domain=0:1, samples=\sample]{sqrt(x)} node[pos=0.3,above,xshift=-0.3cm] {$v_1(r)$};
       \end{axis}
    \end{tikzpicture}
	\end{minipage}
	\hfill
	\begin{minipage}[t]{.49\linewidth}
	\centering
	\begin{tikzpicture}
  		\begin{axis}[
        axis x line=middle,axis y line=middle,
        x label style={at={(current axis.right of origin)},anchor=north, below},
        xlabel=$r$, ylabel=$v(r)$,
        xmin=-0.1, xmax=1.2,
        ymin=-0.1, ymax=1.2,
        width=1\linewidth,
        height=1\linewidth,
        ytick=\empty,
        xtick={1},
        xticklabels={R},
        ytick={1},
        yticklabels={R}
        ]
        \addplot[black, dotted] coordinates {(0, 1) (1, 1)};
        \addplot[black, dotted] coordinates {(1, 0) (1, 1)};
        \addplot[black, dashed][domain=0:1, samples=\sample]{x} node[pos=0.8,xshift=-1.8cm] {$v(r)=r$ (identity)};
        \addplot[udedarkblue, smooth][domain=0:1, samples=\sample]{x^2} node[pos=0.5,right] {$v_2(r)\colonequals v_1\inv(r)$};
       \end{axis}
    \end{tikzpicture}
	\end{minipage}
	$\phantom{+}$\\[2em]
	\begin{minipage}[t]{.49\linewidth}
	\centering
	\begin{tikzpicture}
		\def\rad{3cm};
  		\coordinate (O) at (0,0);
  		\coordinate (1) at (2.5,2.7);
  		\coordinate (2) at (-1.5,1.5);
		\draw[thick] (O) node[circle,inner sep=1.5pt,fill] {} circle [radius=\rad];
		\draw[dashed] (O) circle [radius=0.25*\rad];
		\draw[udedarkblue] (O) circle [radius=sqrt(0.25)*\rad];
		\draw[->] (0.26*\rad,0) -- (0.49*\rad,0);
		\draw[->] (-0.26*\rad,0) -- (-0.49*\rad,0);
		\draw[->] (0,0.26*\rad) -- (0,0.49*\rad);
		\draw[->] (0,-0.26*\rad) -- (0,-0.49*\rad);
		\draw[dashed] (O) circle [radius=0.5*\rad];
		\draw[udedarkblue] (O) circle [radius=sqrt(0.5)*\rad];
		\draw[->] (0.51*\rad,0) -- (0.7*\rad,0);
		\draw[->] (-0.51*\rad,0) -- (-0.7*\rad,0);
		\draw[->] (0,0.51*\rad) -- (0,0.7*\rad);
		\draw[->] (0,-0.51*\rad) -- (0,-0.7*\rad);
		\draw[dashed] (O) circle [radius=0.75*\rad];
		\draw[udedarkblue] (O) circle [radius=sqrt(0.75)*\rad];
		\draw[->] (0.76*\rad,0) -- (0.86*\rad,0);
		\draw[->] (-0.76*\rad,0) -- (-0.86*\rad,0);
		\draw[->] (0,0.76*\rad) -- (0,0.86*\rad);
		\draw[->] (0,-0.76*\rad) -- (0,-0.86*\rad);
		\draw (O) node[above] {$v(0)=0$};
		\draw (1) node {$v(R)=R$};
		\draw[udedarkblue] (2) node {$v(r)>r$};
    \end{tikzpicture}
	\end{minipage}
	\hfill
	\begin{minipage}[t]{.49\linewidth}
	\centering
	\begin{tikzpicture}
  		\def\rad{3cm};
  		\coordinate (O) at (0,0);
  		\coordinate (1) at (2.5,2.7);
  		\coordinate (2) at (-1.5,1.5);
		\draw[thick] (O) node[circle,inner sep=1.5pt,fill] {} circle [radius=\rad];
		\draw[dashed] (O) circle [radius=0.25*\rad];
		\draw[udedarkblue] (O) circle [radius=0.25^2*\rad];
		\draw[->] (0.24*\rad,0) -- (0.07*\rad,0);
		\draw[->] (-0.24*\rad,0) -- (-0.07*\rad,0);
		\draw[->] (0,0.24*\rad) -- (0,0.07*\rad);
		\draw[->] (0,-0.24*\rad) -- (0,-0.07*\rad);
		\draw[dashed] (O) circle [radius=0.5*\rad];
		\draw[udedarkblue] (O) circle [radius=0.5^2*\rad];
		\draw[->] (0.49*\rad,0) -- (0.26*\rad,0);
		\draw[->] (-0.49*\rad,0) -- (-0.26*\rad,0);
		\draw[->] (0,0.49*\rad) -- (0,0.26*\rad);
		\draw[->] (0,-0.49*\rad) -- (0,-0.26*\rad);
		\draw[dashed] (O) circle [radius=0.75*\rad];
		\draw[udedarkblue] (O) circle [radius=0.75^2*\rad];
		\draw[->] (0.74*\rad,0) -- (0.57*\rad,0);
		\draw[->] (-0.74*\rad,0) -- (-0.57*\rad,0);
		\draw[->] (0,0.74*\rad) -- (0,0.57*\rad);
		\draw[->] (0,-0.74*\rad) -- (0,-0.57*\rad);
		\draw (O) node[above] {$v(0)=0$};
		\draw (1) node {$v(R)=R$};
		\draw[udedarkblue] (2) node {$v(r)<r$};
    \end{tikzpicture}
	\end{minipage}
  \caption{Left: Visualization of an expanding radial function $v_1\in\mathcal{V}_R$ with $v_1(r)>r$. Right: Visualization of an contracting radial function $v_2\in\mathcal{V}_R\inv$ with $v_2(r)<r$ which is the inverse of $v_1$.}\label{fig:radialMappings}
\end{figure}
\begin{lemma}[\cite{astala2012quasiconformal}]\label{lemma:burkholderRadial}
	The energy value of the Burkholder energy is constant on the class of \emph{expanding} functions $\mathcal V_R$, i.e.\
	\begin{equation}
		\int_{\BRz}B_p(\nabla\varphi)\,\dx=\int_{\BRz}B_p(\id)\,\dx
	\end{equation}
	for all radial deformations $\varphi(x)=v(\norm x)\.\frac{x}{\norm x}$ with $v\in\mathcal V_R$, where $\BRz$ denotes the ball with radius $R$ and center $0$.
\end{lemma}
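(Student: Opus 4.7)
The plan is to reduce the area integral of $B_p(\nabla\varphi)$ to a one-dimensional boundary term via an exact-derivative identity in the radial variable, and then to check that the resulting value matches $\int_{\BRz}B_p(\id)\,\dx=-\pi R^2$.

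First I would exploit the polar structure. For a radial deformation $\varphi(x)=v(\norm x)\,\tfrac{x}{\norm x}$ the (non-ordered) singular values of $\nabla\varphi$ are $v'(r)$ and $v(r)/r$ with $r=\norm x$, as computed just before Lemma~\ref{lem:radialSeparateConvex}. The expanding hypothesis $v\in\mathcal V_R$ states precisely that $v(r)/r\ge v'(r)\ge 0$, so the operator norm is $\opnorm{\nabla\varphi}=\lmax=v(r)/r$ and the Jacobian is $\det\nabla\varphi=v'(r)\,v(r)/r\ge 0$. Substituting into the definition of $B_p$ yields
\[
B_p(\nabla\varphi(x))=-\tfrac{p}{2}\,v'(r)\Bigl(\tfrac{v(r)}{r}\Bigr)^{p-1}-\Bigl(1-\tfrac{p}{2}\Bigr)\Bigl(\tfrac{v(r)}{r}\Bigr)^{p}.
\]

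Next, passing to polar coordinates the integral becomes $2\pi\int_0^R r\,B_p(\nabla\varphi)\,\dr$, and a direct differentiation gives the key identity
\[
\ddr\Bigl[\frac{v(r)^p}{r^{p-2}}\Bigr]=p\,v'(r)\,\frac{v(r)^{p-1}}{r^{p-2}}-(p-2)\,\frac{v(r)^p}{r^{p-1}}=-2\,r\,B_p(\nabla\varphi),
\]
which I would verify by inspection using $-(p-2)/2=1-p/2$. By the fundamental theorem of calculus,
\[
\int_{\BRz}B_p(\nabla\varphi)\,\dx=-\pi\left[\frac{v(r)^p}{r^{p-2}}\right]_0^R.
\]
Using $v(R)=R$ the upper endpoint contributes $R^2$; for the lower one I would invoke $v\in C^1([0,R])$ with $v(0)=0$, so $v(r)/r\to v'(0)$ is bounded and $v(r)^p/r^{p-2}=(v(r)/r)^p\,r^2\to 0$ as $r\searrow 0$ (since $p\ge 2$). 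Hence the left-hand side equals $-\pi R^2$, and because $B_p(\id)=-\bigl[\tfrac{p}{2}+(1-\tfrac{p}{2})\bigr]=-1$, the right-hand side also equals $-\pi R^2$.

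The only really substantive point is recognizing the antiderivative structure of $r\,B_p(\nabla\varphi)$; once it is spotted, everything else is routine. The expanding hypothesis is what guarantees that the ordering $\lmax=v(r)/r$, $\lmin=v'(r)$ does not flip inside the disk, which is essential because the operator norm (and hence the formula for $B_p$) depends on this ordering; without $v\in\mathcal V_R$ the integrand would acquire a $\max\{\cdot,\cdot\}$ and the exact-derivative structure would be lost on subsets of $\BRz$.
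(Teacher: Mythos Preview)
Your proof is correct and follows essentially the same route as the paper: identify the ordered singular values $\lmax=v(r)/r$, $\lmin=v'(r)$ via the expanding hypothesis, pass to polar coordinates, recognize that $r\,B_p(\nabla\varphi)$ is (up to a constant) the derivative of $v(r)^p/r^{p-2}$, and evaluate at the endpoints. Your treatment of the lower endpoint $r\searrow0$ is in fact slightly more explicit than the paper's, which simply writes the limit without further comment.
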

\begin{proof}
	For $v\in\mathcal V_R$ we have $\lmax=\frac{v(\norm{x})}{\norm{x}}\geq v'(\norm{x})=\lmin\geq 0$ and calculate
	\begin{align}
		\int_{\BRz}-B_p(\nabla\varphi)\,\dx&=\int_{\BRz}\frac{p}{2}\.\frac{v'(\norm{x})\.v(\norm{x})^{p-1}}{\norm{x}^{p-1}}+\frac{2-p}{2}\.\frac{v(\norm{x})^p}{\norm{x}^p}\,\dx\notag\\
		&=2\pi\.\int_0^R\left(\frac{p}{2}\.\frac{v'(r)\.v(r)^{p-1}}{r^{p-1}}+\frac{2-p}{2}\.\frac{v(r)^p}{r^p}\right)r\,\dr\notag\\
		&=\pi\.\int_0^R p\.\frac{v'(r)\.v(r)^{p-1}}{r^{p-2}}+(2-p)\.\frac{v(r)^p}{r^{p-1}}\,\dr\\
		&=\pi\.\int_0^R\ddr\left[\frac{v(r)^p}{r^{p-2}}\right]\dr=\pi\left[\frac{v(R)^p}{R^{p-2}}-\lim_{r\to 0}\frac{v(r)^p}{r^{p-2}}\right]_0^R=\pi R^2\notag\\
		&=\int_{\BRz}1\,\dx=\int_{\BRz}-B_p(\id)\,\dx\,.\notag\qedhere
	\end{align}
\end{proof}
The following lemma establishes an analogous result for the function $\Wmp$ with
\begin{align}
	\Wmp(F)=\frac{\lmax}{\lmin}-\log\frac{\lmax}{\lmin}+\log(\lmax\lmin)=\frac{\lmax}{\lmin}+2\.\log\lmin\,.
\end{align}
\begin{lemma}\label{lemma:WmpRadial}
	The energy value of $\Wmp$ is constant on the class of \emph{contracting} functions $\mathcal V_R\inv$, i.e.\
	\begin{equation}
		\int_{\BRz}\Wmp(\nabla\varphi)\,\dx=\int_{\BRz}\Wmp(\id)\,\dx
	\end{equation}
	for all radial deformations $\varphi(x)=v(\norm x)\.\frac{x}{\norm x}$ with $v\in\mathcal V_R\inv$.
\end{lemma}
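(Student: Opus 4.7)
The plan is to follow the same strategy as in Lemma \ref{lemma:burkholderRadial}, namely reduce the surface integral to a one-dimensional integral in polar coordinates, exhibit a total-derivative structure in the integrand, and read off the result from the boundary data $v(0)=0$, $v(R)=R$.

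First I would observe that for a contracting radial map $\varphi(x)=v(\|x\|)\,\frac{x}{\|x\|}$ with $v\in\mathcal V_R^{-1}$, the singular values of $\nabla\varphi$ are
\[
\lambda_{\max}=v'(\|x\|)\,,\qquad \lambda_{\min}=\frac{v(\|x\|)}{\|x\|}\,,
\]
since by assumption $v'(r)\geq v(r)/r\geq 0$. Substituting into $\Wmp(F)=\tfrac{\lmax}{\lmin}+2\log\lmin$ and passing to polar coordinates yields
\[
\int_{\BRz}\Wmp(\nabla\varphi)\,\dx
=2\pi\int_0^R\!\left[\frac{r^2\,v'(r)}{v(r)}+2r\log\frac{v(r)}{r}\right]\dr\,.
\]

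Next, I would look for the correct primitive. A direct computation gives
\[
\ddr\!\left[r^2\log\frac{v(r)}{r}\right]
=2r\log\frac{v(r)}{r}+\frac{r^2\,v'(r)}{v(r)}-r\,,
\]
so the bracketed integrand above equals $\ddr\!\bigl[r^2\log(v(r)/r)\bigr]+r$. Integrating termwise, the first piece telescopes to $\bigl[r^2\log(v(r)/r)\bigr]_0^R$ and the second contributes $R^2/2$.

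At the upper boundary, $v(R)=R$ gives $R^2\log(v(R)/R)=0$. At the lower boundary, the regularity $v\in C^1([0,R])$ together with $v(0)=0$ forces $v(r)/r\to v'(0)$ as $r\to 0$, so $r^2\log(v(r)/r)\to 0$; if $v'(0)=0$ one argues directly from $v(r)/r\leq v'(r)$, which is bounded, that the limit still vanishes. Hence
\[
\int_{\BRz}\Wmp(\nabla\varphi)\,\dx=2\pi\cdot\frac{R^2}{2}=\pi R^2
=\int_{\BRz}\Wmp(\id)\,\dx\,,
\]
since $\Wmp(\id)=1+2\log 1=1$. The only non-routine step is the justification of the boundary term at $r=0$; this is the mirror image of the issue encountered in the proof of Lemma~\ref{lemma:burkholderRadial}, and is handled in exactly the same way using that $v$ is $C^1$ up to the origin with $v(0)=0$.
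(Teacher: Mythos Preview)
Your argument is correct and is a genuinely different route from the paper's proof. The paper does \emph{not} perform a direct polar-coordinate computation for $\Wmp$; instead it first extends Lemma~\ref{lemma:burkholderRadial} from $B_p$ to the derivative $B_\star=\ddp B_p|_{p=2}$ by a limiting argument, and then invokes the Shield transformation identity $B_\star^\#(F)=\frac12(\Wmp(F)-1)$ together with the change-of-variables formula $\int_\Omega W(\nabla\varphi)\,\dx=\int_{\varphi(\Omega)}W^\#(\nabla(\varphi\inv))\,\intd\xi$ and the bijection $\mathcal V_R\leftrightarrow\mathcal V_R\inv$ given by $w=v\inv$. Your approach is more elementary and self-contained: you identify the primitive $r^2\log(v(r)/r)$ directly and need neither $B_p$ nor the Shield transformation. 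The paper's approach, on the other hand, makes the structural link between $\Wmp$ and the Burkholder functional explicit and shows that Lemma~\ref{lemma:WmpRadial} is really the Shield-dual of Lemma~\ref{lemma:burkholderRadial}, which is conceptually illuminating even if computationally less direct.

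One small point: your treatment of the boundary term at $r=0$ when $v'(0)=0$ is slightly imprecise. The bound $v(r)/r\leq v'(r)$ only controls $v(r)/r$ from above, but the issue is that $v(r)/r$ may tend to $0$, sending $\log(v(r)/r)\to-\infty$. The correct observation is simply that $r^2$ beats the logarithmic blow-up, so $r^2\log(v(r)/r)\to0$ regardless; this is a standard $r^2\log r\to0$ estimate once you use that $v(r)/r$ is bounded above.
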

\begin{proof}
	We first extend Lemma \ref{lemma:burkholderRadial} to the function $B_\star\col\GLp(2)\to\R$ with
	\[
		B_\star(F)=\lim_{p\searrow 2}\frac{B_p(F)-B_2(F)}{p-2}=\ddp B_p(F)|_{p=2}=-\frac{1}{2}\left(1+\log\big(\opnorm{F}^2\big)\right)\det F+\frac{1}{2}\.\opnorm{F}^2,
	\]
	for all $F\in\GLp(2)$, where $B_2(F)=\det F$; note that $B_2$ is a Null-Lagrangian. For expanding deformations defined by $\mathcal V_R$ we find
	\begin{align*}
		\int_{\BRz}B_\star(\nabla\varphi)\,\dx&=\int_{\BRz}\lim_{p\searrow 2}\frac{B_p(\nabla\varphi)-B_2(\nabla\varphi)}{p-2}\,\dx=\lim_{p\searrow 2}\frac{1}{p-2}\int_{\BRz}B_p(\nabla\varphi)-B_2(\nabla\varphi)\,\dx\\
		&=\lim_{p\searrow 2}\frac{1}{p-2}\int_{\BRz}B_p(\id)-B_2(\id)\,\dx=\int_{\BRz}B_\star(\id)\,\dx\,,
	\end{align*}
	thus the energy potential of $B_\star$ is indeed constant on the class $\mathcal{V}_R$. Next, recall that $B_\star$ and $\Wmp$ are related via the Shield transformation, i.e.\ that $B_\star^\#(F)=\frac{1}{2}\bigl(\Wmp(F)-1\bigr)$ for all $F\in\GLp(2)$, and that for every expanding radial deformation $\varphi$, its inverse mapping $\varphi\inv$ is a contracting radial deformation, i.e.\ that for every $v(r)\in\mathcal{V}_R\inv$ there exists a unique $w(r)\in\mathcal{V}_R$ with $w=v\inv$. We can therefore apply the general formula
	\begin{equation}\label{eq:shieldGeneral}
		\int_\Omega W\big(\nabla\varphi(x)\big)\,\dx=\int_{\varphi(\Omega)} W\big(\big[\nabla_\xi(\varphi\inv)(\xi)\big]\inv\big)\det\nabla_\xi\varphi\inv(\xi)\,\intd\xi=\int_{\varphi(\Omega)} W^\#\big(\nabla_\xi(\varphi\inv)(\xi)\big)\,\intd\xi
	\end{equation}
	to $\varphi(\Omega)=\Omega=\BRz$ to find
	\begin{equation}
		\int_{\BRz}\Wmp(F_v)\,\dx=\int_{\BRz}2\.B_\star(F_w)+1\,\dx=\int_{\BRz}2\.B_\star(\id)+1\,\dx=\int_{\BRz}\Wmp(\id)\,\dx\,,
	\end{equation}
	where $F_v$ and $F_w$ are the deformation gradients corresponding to $v(r)$ and $w(r)=v\inv(r)$, respectively.
\end{proof}
Both the radial deformations $\varphi(x)=v(\norm x)\.\frac{x}{\norm x}$ defined by $v\in\mathcal V_R$ in Lemma \ref{lemma:burkholderRadial} and $v\in\mathcal V_R\inv$ in Lemma \ref{lemma:WmpRadial} can be extended by \cite{astala2012quasiconformal}
\begin{itemize}
	\item choosing any $F_0=\lambda\.Q$ instead of $\id$ for arbitrary $\lambda>0$ and $Q\in\SO(2)$,
	\item choosing any point $z_0\in\Omega$ instead of the origin,
	\item combining multiple separate balls $B_{R_i}(z_i)$ with $\bigcap_{i}B_{R_i}(z_i)=\emptyset$,
	\item inductively adding more balls $B_{r}(z_0')$ which are contained in a previous ball $B_{R}(z_0)$ under the assumption that the contracting deformation of the outer ball $B_{R}(z_0)$ maintains $F_0=\lambda\.Q$ at the boundary of the inner ball $B_{r}(z_0')$,
\end{itemize}
to construct an overall non-radially symmetric deformation (cf. Figure \ref{fig:circularPacking}) with the same energy level as the homogeneous deformation.

\begin{figure}[h!]
    \centering  
    \begin{tikzpicture}
     \begin{axis}[
        axis x line=middle,axis y line=middle,
        xmin=-1, xmax=1,
        ymin=-1, ymax=1,
        width=.51\linewidth,
        height=.53\linewidth,
        hide axis
        ]
        \addplot[very thick,domain=0:360,samples=\sample] ({0.99*cos(x)},{0.99*sin(x)});
		\addplot[thick,domain=0:360,samples=\sample] ({0.5*cos(x)+0.15},{0.5*sin(x)+0.25});
		\addplot[thick,domain=0:360,samples=\sample] ({0.3*cos(x)-0.6},{0.3*sin(x)-0.1});
		\addplot[thick,domain=0:360,samples=\sample] ({0.3*cos(x)},{0.3*sin(x)-0.6});
		\addplot[thick,domain=0:360,samples=\sample] ({0.2*cos(x)+0.6},{0.2*sin(x)-0.4});
		\addplot[thick,domain=0:360,samples=\sample] ({0.1*cos(x)-0.6},{0.1*sin(x)+0.5});
	\end{axis}
    \end{tikzpicture}
    \hfill
    \begin{tikzpicture}
     \begin{axis}[
        axis x line=middle,axis y line=middle,
        xmin=-1, xmax=1,
        ymin=-1, ymax=1,
        width=.51\linewidth,
        height=.53\linewidth,
        hide axis
        ]
        \addplot[very thick,domain=0:360,samples=\sample] ({0.99*cos(x)},{0.99*sin(x)});
		\addplot[very thick,domain=0:360,samples=\sample] ({0.99*cos(x)},{0.99*sin(x)});
		\addplot[thick,domain=0:360,samples=\sample] ({0.5*cos(x)+0.15},{0.5*sin(x)+0.25});
		\addplot[thick,domain=0:360,samples=\sample] ({0.3*cos(x)-0.6},{0.3*sin(x)-0.1});
		\addplot[thick,domain=0:360,samples=\sample] ({0.3*cos(x)},{0.3*sin(x)-0.6});
		\addplot[thick,domain=0:360,samples=\sample] ({0.2*cos(x)+0.6},{0.2*sin(x)-0.4});
		\addplot[thick,domain=0:360,samples=\sample] ({0.1*cos(x)-0.6},{0.1*sin(x)+0.5});
		\addplot[thick,domain=0:360,samples=\sample] ({0.35*cos(x)+0.15},{0.35*sin(x)+0.35});
		\addplot[thick,domain=0:360,samples=\sample] ({0.1*cos(x)-0.7},{0.1*sin(x)-0.2});
		\addplot[thick,domain=0:360,samples=\sample] ({0.1*cos(x)},{0.1*sin(x)-0.5});
		\addplot[thick,domain=0:360,samples=\sample] ({0.15*cos(x)+0.6},{0.15*sin(x)-0.4});
		\addplot[thick,domain=0:360,samples=\sample] ({0.15*cos(x)-0.5},{0.15*sin(x)-0.6});
		\addplot[thick,domain=0:360,samples=\sample] ({0.15*cos(x)-0.52},{0.15*sin(x)});
		\addplot[thick,domain=0:360,samples=\sample] ({0.1*cos(x)+0.1},{0.1*sin(x)-0.7});
		\addplot[thick,domain=0:360,samples=\sample] ({0.15*cos(x)+0.8},{0.15*sin(x)});
		\addplot[thick,domain=0:360,samples=\sample] ({0.05*cos(x)-0.3},{0.05*sin(x)+0.7});
		\addplot[thick,domain=0:360,samples=\sample] ({0.05*cos(x)-0.1},{0.05*sin(x)});
		\addplot[thick,domain=0:360,samples=\sample] ({0.05*cos(x)+0.1},{0.05*sin(x)-0.1});
		\addplot[thick,domain=0:360,samples=\sample] ({0.05*cos(x)+0.35},{0.05*sin(x)-0.05});
		\addplot[thick,domain=0:360,samples=\sample] ({0.05*cos(x)-0.15},{0.05*sin(x)-0.7});
		\addplot[thick,domain=0:360,samples=\sample] ({0.05*cos(x)-0.8},{0.05*sin(x)+0.3});
		\addplot[thick,domain=0:360,samples=\sample] ({0.05*cos(x)+0.4},{0.05*sin(x)-0.7});
		\addplot[thick,domain=0:360,samples=\sample] ({0.05*cos(x)+0.75},{0.05*sin(x)+0.5});
	\end{axis}
    \end{tikzpicture}
    \caption{Circular packing of several balls with various contracting mappings to achieve an overall non-symmetric deformation, also called \emph{piecewise radial mapping}.\label{fig:circularPacking}}
\end{figure}
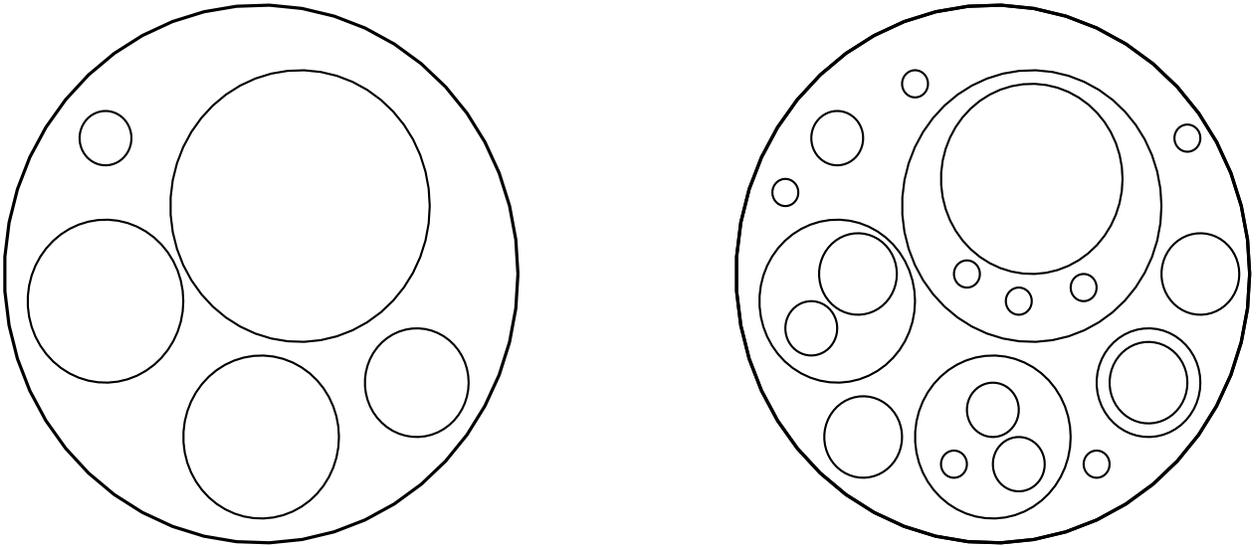
%
%
%
%
%
%%%%%%%%%%%%%%%%%%%%%%%%%%%%%%%%%%%%%%%%%%%%%%%%%%%%%%%%%%%%%%%%%
\section{Summary}
The class of planar energy functions with an additive volumetric-isochoric split, i.e.\ functions on $\GLp(2)$ of the form
\begin{equation}
	W(F)=h\bigg(\frac{\lambda_1}{\lambda_2}\bigg)+f(\lambda_1\lambda_2)\,,\qquad h,f\col\Rp\to\R\,,\quad h(t)=h\bigg(\frac{1}{t}\bigg)\qquad\text{for all }\;t\in(0,\infty)\,,
\end{equation}
where $\lambda_1,\lambda_2>0$ denote the singular values of $F$ and $h,f$ are given real-valued functions, proves to be a rather interesting subject in the context of Morrey's conjecture: On the one hand, if neither $h$ nor $f$ is convex, then $W$ is not rank-one convex; on the other hand, convexity of both $f$ and $h$ already implies that $W$ is quasiconvex.

Investigating the remaining cases, we first focused on functions with a convex isochoric part $h$. In that case, Lemma \ref{lem:minimallyConvexEnergy} shows that the question whether or not rank-one convexity implies quasiconvexity can be reduced to the question whether the particular (rank-one convex) function $\Wmp\col\GLp(2)\to\R$ with
\begin{equation}
	\Wmp(F)=\frac{\lambdamax}{\lambdamin} - \log\left(\frac{\lambdamax}{\lambdamin}\right) + \log\det F=\frac{\lmax}{\lmin}+2\.\log\lmin
\end{equation}
is quasiconvex. While $\Wmp$ is not polyconvex according to Lemma \ref{lem:WmpNotPolyconvex}, determining whether or not $\Wmp$ is indeed quasiconvex remains an open problem at this point.

The case of volumetric-isochorically split functions with a convex volumetric part $f$ appears to be more involved: In Section \ref{sec:Mm}, we identified three \enquote{least} rank-one convex functions $\Wmm(F),\,\Ws(F),$ and $\Wn(F)$, none of which is currently known to be quasiconvex. A comprehensive overview for candidates with an additive volumetric-isochoric split is given in Figure \ref{fig:morreySummary}.

\subsection{Symmetry and stability}

The energy potential $\Wmp$ also exhibits a number of interesting properties regarding the relation between symmetry and stability: According to Lemma \ref{lemma:WmpRadial}, $\Wmp$ is constant for the non-trivial class of contracting radially symmetric deformations, i.e.\ $\Wmp$ allows for inhomogeneous deformations whose energy level is equal to the homogeneous one. This implies that the homogeneous solution to the energy minimization problem is neither unique nor stable.

As shown in Figure \ref{fig:circularPacking}, it is thereby possible to construct a non-symmetrical microstructure for the fully isotropic energy potential $\Wmp$ even if the applied boundary conditions are fully symmetrical.
This is in stark contrast to the linear convex case, where such symmetrically posed problems generally yield symmetric solutions; in practice, this observation allows for certain simplifications (e.g.\ by reducing calculations to any quarter of a cylinder) of symmetric linear problems. However, in the nonlinear non-convex case, it is even possible for a symmetrically stated problem (cf.\ equation \eqref{eq:nonRadialMinimizer}) to not have a stable symmetric solution at all \cite{brock1996symmetry}. This phenomenon, which is exhibited by the energy function $\Wmp$ as well, demonstrates a \enquote{preference of disorder over order} which can occur in non-(strictly-)quasiconvex problems.

Note that due to Lemma \ref{lem:radialSeparateConvex}, radially symmetric deformations cannot be energetically favorable compared to the affine solution even under the weaker assumption of separate convexity. Additionally, in an intriguing article by Kawohl \cite{kawohl1988quasiconvexity}, it is shown that the simple assumption $\varphi(x,y)=\varphi(-x,y)$ for the test function $\varphi\in W_0^{1,\infty}(\Omega)$ on the domain $\Omega=(-1,1)^2$ is enough for the transition from rank-one convexity to quasiconvexity. Thus in general, the break of symmetry plays an essential role in stability and, in particular, must be taken into account for the minimization of potentially non-quasiconvex energies.
%
%
%
%
%
%%%%%%%%%%%%%%%%%%%%%%%%%%%%%%%%%%%%%%%%%%%%%%%%%%%%%%%%%%
%\subsubsection*{Acknowledgements}
%The work of I.D.\ Ghiba has been supported by a grant of the Romanian Ministry of Research and Innovation, CNCS--UEFISCDI, Project no. PN-III-P1-1.1-TE-2019-0348, Contract No. TE 8/2020, within PNCDI III.
%
%
%
%
%
%%%%%%%%%%%%%%%%%%%%%%%%%%%%%%%%%%%%%%%%%%%%%%%%%%%%%%%%%%%%%%%%%
\section{References}
\footnotesize
\printbibliography[heading=none]
%
%
%
%
%
%%%%%%%%%%%%%%%%%%%%%%%%%%%%%%%%%%%%%%%%%%%%%%%%%%%%%%%%%%%%%%%%%
\small
\begin{appendix}
\section{Complex Analysis}\label{sec:complexAnalysis}
As indicated in Section \ref{sec:Burkholder}, the energy function $\Wmp$ has an intriguing connection to the work of Iwaniec in the field of complex analysis and Burkholder's martingale theory. Here, we briefly discuss the conversion of the energy $\Wmp(F)$ into complex notation.

\newcommand{\fc}{\mathfrak{f}}
Generally, we now consider a complex-valued function $\fc\col U\subset\C\to\C$ with $z\mapsto \fc(z)$. (Note that this function $\fc$ should not be  mistaken with the volumetric part $f\col\Rp\to\R$ used in the additive volumetric-isochoric split.) 
It is common to use the complex vector $(\partial_z\fc,\partial_\zbar \fc)$ to describe the derivative of a non-holomorphic function on $\C$ instead of the deformation gradient $F=D\fc(z)\in\R^{2\times2}$. Thus we need to rephrase any energy function on $\R^{2\times2}$ or on $\GLp(2)$ as a mapping $L\col\C\times\C\to\R$ and consider the corresponding minimization problem
\begin{equation}
	\min_{\fc\col U\to\C}\int_U L(\partial_z\fc,\partial_\zbar \fc)\,\dz\,.
\end{equation}
On the other hand, we can always translate $(z,w)\mapsto L(z,w)$ to a real-valued function $W\col\R^{2\times 2}\to\R$ using the identity
\begin{align}
		\partial_z\fc(z)&=\frac{1}{2}\left(\partial_x-i\.\partial_y\right)\left[u(x,y)+i\.v(x,y)\right]=\frac{1}{2}\left[\partial_xu+\partial_yv+i\left(\partial_xv-\partial_yu\right)\right]\,,\\
		\partial_\zbar \fc(z)&=\frac{1}{2}\left(\partial_x+i\.\partial_y\right)\left[u(x,y)+i\.v(x,y)\right]=\frac{1}{2}\left[\partial_xu-\partial_yv+i\left(\partial_xv+\partial_yu\right)\right]
\end{align}
to compute the corresponding deformation gradient $F=D\fc(z)$, i.e.\ using the isomorphism
\begin{align}
	\bigl(z(F),w(F)\bigr)=\matr{F_{11}+F_{22} & F_{11}-F_{22}\\ i\.(F_{21}-F_{12}) & i\.(F_{21}+F_{12})}\quad\iff\quad F(z,w)=\matr{\Re z+\Re w & \Im w -\Im z\\ \Im z+\Im w & \Re z-\Re w}.\label{eq:complexIsomorphism}
\end{align}
\begin{example}
	We consider $L\col\C\times\C\to\R$ with
	\begin{equation}
		L(z,w)=\abs z^2-\abs w^2
	\end{equation}
	and compute
	\begin{align}
		L(\partial_z\fc,\partial_\zbar \fc)&=\abs{\partial_z\fc}^2-\abs{\partial_\zbar \fc}^2\notag\\
		&=\left|\frac{1}{2}\left[\partial_xu+\partial_yv+i\left(\partial_xv-\partial_yu\right)\right]\right|^2-\left|\frac{1}{2}\left[\partial_xu-\partial_yv+i\left(\partial_xv+\partial_yu\right)\right]\right|^2\notag\\
		&=\frac{1}{4}\left[\left(\partial_xu+\partial_yv\right)^2+\left(\partial_xv-\partial_yu\right)^2-\left(\partial_xu-\partial_yv\right)^2-\left(\partial_xv+\partial_yu\right)^2\right]\notag\\
		&=\frac{1}{4}\left[\partial_xu^2+2\.\partial_xu\.\partial_yv+\partial_yv^2+\partial_xv^2-2\.\partial_xv\.\partial_yu+\partial_yu^2\right.\label{eq:complexDeterminant}\\
		&\phantom{=}\;\left.-\partial_xu^2+2\.\partial_xu\.\partial_yv-\partial_yv^2-\partial_xv^2-2\.\partial_xv\.\partial_yu-\partial_yu^2\right]\notag\\
		&=\frac{1}{2}\left[\partial_xu\.\partial_yv-\partial_xv\.\partial_yu+\partial_xu\.\partial_yv-\partial_xv\.\partial_yu\right]\notag\\
		&=\partial_xu\.\partial_yv-\partial_xv\.\partial_yu=J_\fc(z)\,.\notag
	\end{align}
	Thus we can identify the complex mapping $L(z,w)=\abs z^2-\abs w^2$ with the isotropic energy function
	\begin{equation}
		W(F)=F_{11}F_{22}-F_{12}F_{21}=\det F\,.
	\end{equation}
	Similarly, since
	\begin{align}
		\abs{\partial_z\fc}^2+\abs{\partial_\zbar \fc}^2
		&=\left|\frac{1}{2}\left[\partial_xu+\partial_yv+i\left(\partial_xv-\partial_yu\right)\right]\right|^2+\left|\frac{1}{2}\left[\partial_xu-\partial_yv+i\left(\partial_xv+\partial_yu\right)\right]\right|^2\notag\\
		&=\frac{1}{4}\left[\left(\partial_xu+\partial_yv\right)^2+\left(\partial_xv-\partial_yu\right)^2+\left(\partial_xu-\partial_yv\right)^2+\left(\partial_xv+\partial_yu\right)^2\right]\notag\\
		&=\frac{1}{4}\left[\partial_xu^2+2\.\partial_xu\.\partial_yv+\partial_yv^2+\partial_xv^2-2\.\partial_xv\.\partial_yu+\partial_yu^2\right.\label{eq:complexNorm}\\
		&\phantom{=}\;\left.+\partial_xu^2-2\.\partial_xu\.\partial_yv+\partial_yv^2+\partial_xv^2+2\.\partial_xv\.\partial_yu+\partial_yu^2\right]\notag\\
		&=\frac{1}{2}\left[\partial_xu^2+\partial_yv^2+\partial_xv^2+\partial_yu^2\right]=\partial_xu^2+\partial_yv^2,\notag
	\end{align}
	we identify $L(z,w)=\abs z^2+\abs w^2$ with $W(F)=\frac{1}{2}\.\norm F^2$.
\end{example}
Analogous computations yield the representations
\begin{align}
	\abs z^2&=\frac{1}{2}\left[\left(\abs z^2+\abs w^2\right)+\left(\abs z^2-\abs w^2\right)\right]=\frac{1}{4}\.\norm F^2+\frac{1}{2}\.\det F\,,\notag\\
	\abs w^2&=\frac{1}{2}\left[\left(\abs z^2+\abs w^2\right)-\left(\abs z^2-\abs w^2\right)\right]=\frac{1}{4}\.\norm F^2-\frac{1}{2}\.\det F\,,\notag\\
	\abs z\abs w&=\sqrt{\frac{1}{4}\.\norm F^2+\frac{1}{2}\.\det F}\cdot\sqrt{\frac{1}{4}\.\norm F^2-\frac{1}{2}\.\det F}=\frac{1}{4}\.\sqrt{\norm F^4-4\.(\det F)^2}\notag\,,\\
	\left(\abs z+\abs w\right)^2 &= \abs z^2+\abs w^2+2\.\abs z\abs w = \frac{1}{2}\.\norm F^2+\frac{2}{4}\sqrt{\norm{F}^4-4\.(\det F)^2}\notag\\
	&= \frac{\norm{F}^2+\sqrt{\norm{F}^4-4\.(\det F)^2}}{2} = \opnorm F^2 = \lmax^2\,,\label{eq:lmaxComplex}\\
	\abs z -\abs w &= \frac{\abs z^2-\abs w^2}{\abs z+\abs w}=\frac{\det F}{\lmax}=\lmin\,,\notag\\
	\K(F) &= \frac{\lmax}{\lmin} = \frac{\abs z +\abs w}{\abs z -\abs w}\,,\notag\\
	K(F) &= \frac{\norm F^2}{2\det F} = \frac{\abs z^2+\abs w^2}{\abs z^2-\abs w^2}\,,\notag
\end{align}	
which can be used to to convert the energy
\begin{equation}
	\Wmp(F)=\frac{\lmax}{\lmin} - \log\frac{\lmax}{\lmin} + \log\det F = K(F)+2\.\log\lmin = \frac{\abs z +\abs w}{\abs z -\abs w}+2\.\log\left(\abs z-\abs w\right)
\end{equation}
to a mapping $L\col\C\times\C\to\R$; note that $\abs z>\abs w$ holds for arbitrary $F\in\GLp(2)$ because\footnote{Similarly, $\det F=\abs z-\abs w=0$ for any rank-one matrix $F=\xi\otimes\eta$ with $\xi,\eta\in\R^2$.} $\abs z-\abs w=\det F>0$.

As indicated in Section \ref{sec:Burkholder}, the Burkholder functional\footnote{Usually, for the Burkholder functional one refers to a work by Burkholder about sharp estimates for martingales \cite{burkholder1988sharp}. However, there is some disagreement about which expression exactly defines the function. Here, we use the expression $B_p(F)$ as defined by Astala et al.\ \cite{astala2012quasiconformal} but a reversed sign so that we are looking for quasiconvexity instead of quasiconcavity. Baernstein \cite{baernstein2011some}, for example, generalizes the Burkholder function to include the case $p\in(1,2)$ as well and introduces an additional constant $\alpha_p$ (cf.\ inequality \eqref{eq:BurkholderAlternative}).}
\begin{equation}
	 B_p(F)=-\left[\frac{p}{2}\.\det F+\left(1-\frac{p}{2}\right)\opnorm{F}^2\right]\opnorm{F}^{p-2}
	 \,,
\end{equation}
with $p\geq 2$ and
\begin{equation}
	\opnorm F =\lmax = \sqrt{\frac{\norm{F}^2+\sqrt{\norm{F}^4-4\.(\det F)^2}}{2}}
\end{equation}
denoting the operator norm, plays an important role in complex analysis. Its representation as a complex mapping $L_p\col\C\times\C\to\R$ can be computed via
\begin{align}
	-B_p(F)&=\left[\frac{p}{2}\.\det F+\left(1-\frac{p}{2}\right)\opnorm{F}^2\right]\opnorm{F}^{p-2}\notag\\
	&=\left[\frac{p}{2}\left(\abs z^2-\abs w^2\right)+\left(1-\frac{p}{2}\right)\left(\abs z +\abs w\right)^2\right]\left(\abs z +\abs w\right)^{p-2}\notag\\
	&=\frac{1}{2}\left[p\.\abs z^2-p\.\abs w^2+(2-p)\left(\abs z^2+2\.\abs z\abs w +\abs w^2\right)\right]\left(\abs z +\abs w\right)^{p-2}\\
	&=\left[\abs z^2+(2-p)\.\abs z\abs w+(1-p)\abs w^2\right]\left(\abs z +\abs w\right)^{p-2}\notag\\
	&=\left(\abs z-(p-1)\abs w\right)\left(\abs z +\abs w\right)^{p-1}\equalscolon L_p(z,w)\notag
	\,.
\end{align}
 
In addition to its connection to Morrey's conjecture (cf.\ Section \ref{sec:Burkholder}), the Burkholder functional $B_p(F)$, or rather its complex representation $L_p(z,w)$, plays an important role in martingale inequalities and the \emph{Beurling-Ahlfors transform} (also called \emph{complex Hilbert transform}) $S\col L^p(\C)\to L^p(\C)$, defined as the singular integral
\begin{equation}
	(S\omega)(z)=-\frac1\pi\iint_\C\frac{\omega(\xi)}{(z-\xi)^2}\,\intd\xi\,,\qquad\omega\in L^p(\C)\,.
\end{equation}
One primary aim in function theory is determining the $p$-norm
\[
	\norm{S}_{p}=\sup_{\omega\in L^p(\C)}\frac{\norm{S(\omega)}_p}{\norm{\omega}_p}\,,\qquad 1<p<\infty
\]
of the Beurling transform. A yet unsolved conjecture of Iwaniec \cite{iwaniec1982extremal,iwaniec1993quasiregular,iwaniec2002nonlinear} states that
\begin{equation}
	\norm{S}_{p} = p^\star - 1 = \begin{cases}p-1 &\casesif 2\leq p<\infty\,,\\\frac{1}{p-1} &\casesif 1<p\leq 2\,.\end{cases}\label{eq:IwaniecConjecture}
\end{equation}
We can shorten the notation in \eqref{eq:IwaniecConjecture} by introducting $p^\star=\max(p,p')$ with $\frac{1}{p}+\frac{1}{p'}=1$ for arbitrary $p>1$. It is already known \cite{iwaniec1993quasiregular} that $\norm{S}_{p}\geq p^\star - 1$ as well as $\norm{S}_{p}\leq C_n\.(p^\star - 1)$, where the constant $C_n$ is at best exponential in $n$. 
Burkholder \cite{burkholder1988sharp} proved that the inequality
\begin{equation}
	\abs z^p -(p^\star-1)^p\abs w^p \leq \alpha_p\left(\abs z-(p^\star-1)\.\abs w\right)\left(\abs z +\abs w\right)^{p-1},\qquad\alpha_p=p\left(1-\frac{1}{p^\star}\right)^{p-1},\label{eq:BurkholderAlternative}
\end{equation}
holds for all $p>1$. In the case $p\geq 2$, we can identify Burkholder's functional $L_p(z,w)$ in complex notion with the right hand side of \eqref{eq:BurkholderAlternative}. The term on the left-hand side,
\begin{equation}
	V(z,w) = \abs z^p -(p^\star-1)^p\abs w^p\label{eq:BurkholderV}
	\,,
\end{equation}
plays an important role in sharp estimates for martingales \cite{burkholder1988sharp}. To prove Iwaniec's conjecture \eqref{eq:IwaniecConjecture}, the expression \cite{astala2015hunt}
\begin{equation}
	\norm{S(\fc)}_p\leq(p^\star-1)\norm{\fc}_p\,,\qquad 1<p<\infty
\end{equation}
must be shown to hold for all $\fc\in L_p(\C)$. With the identity (cf.\ \cite{banuelos1997martingale}) $\displaystyle S\circ\frac{\partial}{\partial\zbar}=\frac{\partial}{\partial z}$, the above inequality is equivalent to
\begin{equation}
	\norm{\partial_z\fc}_p\leq(p^\star-1)\norm{\partial_\zbar \fc}_p\,,\qquad \fc\in C_0^\infty(\C)\,,
\end{equation}
which shows a direct connection to the mapping $V\col\C\times\C\to\R$ in equation \eqref{eq:BurkholderV}. The conjecture of Iwaniec would follow \cite{banuelos1997martingale} from
\begin{equation}
	\iint_\C L_p(\partial_z \fc,\partial_\zbar \fc)\,\intd\xi\leq 0\qquad \forall\;\fc\in C_0^\infty(\C)\,.\label{eq:BurkholderIwaniecConjecture}
\end{equation}
This inequality is the motivatition for the claim of quasiconcavity of $L_p(z,w)$, i.e.\ quasiconvexity of the Burkholder functional $B_p(F)$, as a sufficient but not neccesarry statement to ensure Iwaniec's conjecture \cite{davis2011donald}. In fact, inequality \eqref{eq:BurkholderIwaniecConjecture} is equivalent to quasiconcavity of $L_p$ at 0 \cite{banuelos2010foundational}. In an older work of Baernstein \cite{baernstein2011some} they present some first numerical evidence in favor of inequality \eqref{eq:BurkholderIwaniecConjecture} up to machine precision, using a $2N^2$ real dimensional space splitting the unit square $[0,1]^2$ into triangles to approximate $L_P(z,w)$ by piecewise linear functions (with N ranging from 6 to 100).
\end{appendix}
\end{document}